\newtheorem{thm}{Theorem}[section]
\newtheorem{lem}[thm]{Lemma}
\newtheorem{cor}[thm]{Corollary}
\theoremstyle{definition}
\theoremstyle{remark}
\title[Titchmarsh Theorems, Paley inequality and $L^p-L^q$ boundedness ]{Titchmarsh Theorems, Hausdorff-Young-Paley inequality and $L^p-L^q$ boundedness of Fourier multipliers  on Harmonic $NA$ groups}
\author{Vishvesh Kumar}
\address{Vishvesh Kumar \endgraf
	Department of Mathematics: Analysis, Logic and Discrete Mathematics
	\endgraf
	Ghent University, Belgium}
\email{vishveshmishra@gmail.com, Vishvesh.Kumar@UGent.be}
\author[Michael Ruzhansky]{Michael Ruzhansky}
\address{
	Michael Ruzhansky
	\endgraf
	Department of Mathematics: Analysis, Logic and Discrete Mathematics
	\endgraf
	Ghent University, Belgium
	\endgraf
	and
	\endgraf
	School of Mathematical Sciences
	\endgraf
	Queen Mary University of London
	\endgraf
	United Kingdom
	\endgraf
	{\it E-mail address} {\rm michael.ruzhansky@ugent.be}
}
\begin{document}
	
	\begin{abstract} In this paper we extend   classical Titchmarsh theorems on the Fourier transform of  H$\ddot{\text{o}}$lder-Lipschitz functions to the setting of harmonic $NA$ groups,  which relate smoothness properties of functions to the growth and integrability of their Fourier transform. We prove a Fourier multiplier theorem for $L^2$-H$\ddot{\text{o}}$lder-Lipschitz spaces on Harmonic $NA$ groups. We also derive conditions and a characterisation of Dini-Lipschitz classes on Harmonic $NA$ groups in terms of the behaviour of their Fourier transform. Then, we  shift our attention to the  spherical analysis on Harmonic $NA$ group. Since the spherical analysis on these groups fits well in the setting of Jacobi analysis we prefer to work in the Jacobi setting.  We prove $L^p$-$L^q$ boundedness of Fourier multipliers by extending a classical theorem of H$\ddot{\text{o}}$rmander to the Jacobi analysis setting. On the way to accomplish this classical result we prove Paley-type inequality and Hausdorff-Young-Paley inequality. We also establish $L^p$-$L^q$ boundedness of spectral multipliers of the Jacobi Laplacian.

	\end{abstract}
	\keywords{Titchmarsh theorems, H$\ddot{\text{o}}$lder-Lipschitz, Harmonic $NA$ groups, Helgason  transform, Paley-type inequality, Hausdorff-Young-Paley inequality, $L^p$-$L^q$ boundedness, Fourier multipliers, Jacobi transform, Spectral multipliers }
	\subjclass[2010]{Primary 43A85 Secondary 22E30}
	\maketitle
	\tableofcontents 
	
	\section{Introduction}  \label{Sec1}
	\noindent Harmonic $NA$ groups (also known as Damek-Ricci spaces) form a class of solvable Lie groups (non-unimodular), equipped with a left invariant Riemannian metric, called
“harmonic extension of $H$-type groups”, is a natural generalisation of the Iwasawa $NA$
groups of the real rank-one simple Lie groups. In particular, as  Riemannian manifolds,  the class of harmonic $NA$ groups contains rank-one symmetric spaces of non-compact type as a very small subclass. Although, the rank one noncompact Riemannian symmetric spaces are the most  discernible prototypes of harmonic $NA$ groups, they have many contrasts  with these. From the geometric point of view, Harmonic $NA$ groups, in general, are not symmetric which can be seen through the fact that the geodesic inversion is not an isometry. On the other hand, these spaces cannot have Kunze-Stein property for being noncompact amenable groups. 

The studies of relations between the smoothness of the functions and the growth and (or) the integrability of their Fourier coefficient are  among the classical and fundamental problems in Fourier analysis. These types of studies include the classical theorems of Fourier transform like Riemann-Lebesgue Lemma relating the integrability of a functions and decay of its Fourier transform, and the Hausdorff-Young inequality relating the integrability of a function and its Fourier transform.  Indeed, the Hausdorff-Young inequality on $\mathbb{R}$ states that for $f \in L^p(\mathbb{R}),\, 1 \leq p \leq 2,$ its Fourier transform $\widehat{f} \in L^{p'}(\mathbb{R})$ with $\frac{1}{p}+\frac{1}{p'}=1.$ Titchmarsh \cite{Titch} studies the problem of how much this fact can be strengthened if $f$ additionally satisfies a Lipschitz condition. Indeed, in this vain, Titchmarsh \cite{Titch} showed the this can be significantly improved. The first Titchmarsh theorem we deal with \cite[Theorem 37]{Titch} in this paper is recalled below: We define H$\ddot{\text{o}}$lder-Lipschitz space $\textnormal{Lip}_{\mathbb{R}}(\alpha, p)$ for $0< \alpha \leq 1 $ and $1<p<\infty$ by 
$$\textnormal{Lip}_{\mathbb{R}}(\alpha, p):= \left\{f \in L^p(\mathbb{R}): \|f(t+\cdot)-f(\cdot)\|_{L^p(\mathbb{R})}= O(t^\alpha)\,\,\,\text{as}\,\, t \rightarrow 0\right\}.$$

{\bf Theorem A.} Let $0<\alpha \leq 1$ and $1 <p \leq 2.$ If $f \in \textnormal{Lip}_{\mathbb{R}}(\alpha, p)$, then $\widehat{f} \in L^\beta(\mathbb{R})$ for $$\frac{p}{p+\alpha p-1}<\beta \leq p',\,\,\,\,\,\,\, \frac{1}{p}+\frac{1}{p'}=1.$$

To discuss the improvement in the conclusion $\widehat{f} \in L^{\beta}(\mathbb{R})$ in Theorem A above in comparison to the Hausdorff-Young inequality, for instance we can take $\alpha=\frac{1}{2}$ and $p=2$ then $\frac{p}{p+\alpha p-1}=1$, therefore $\widehat{f} \in L^\beta(\mathbb{R})$ for all $\beta \in (1, 2].$  The above theorem has been investigated in several different settings, e.g. on compact symmetric spaces of rank one, noncompact symmetric spaces of rank one, compact homogeneous manifolds, and the Euclidean space $\mathbb{R}^n$  \cite{Platonov, Platonov1, DDR, Bray, Younis1, Younis2}. Notably, our motivation is the work of Bray \cite{Bray} on $\mathbb{R}^n$ where he extends Theorem A in higher dimenstions, and a recent work of the third author \cite{DDR} on compact homogeneous spaces, as well as the work of Ray and Sarkar \cite{RS} where they proved a version of the Hausdorff-Young inequality for Damek-Ricci spaces.  In the context of Dini-Lipschitz spaces, Theorem A has been studied in the setting on $\mathbb{R}$ by Younis \cite{Younis}. In Theorem A' and Theorem \ref{DTitch2} we extend the Theorem A to the setting of Damek-Ricci spaces. It is worth noting that Theorem \ref{DTitch2} is already new in rank one noncompact symmetric space setting. 

The second Titchmarsh theorem \cite[Theorem 85]{Titch} which is also of our interest is stated below.

{\bf Theorem B.} Let $0<\alpha \leq 1$ and $f \in L^2(\mathbb{R}).$ Then $f \in \textnormal{Lip}_{\mathbb{R}}(\alpha, 2)$ if and only if 
$$\int_{|\xi| >\frac{1}{t}} |\widehat{f}(\xi)|^2\, d\xi = O(t^{2\alpha})\,\,\,\,\text{as}\,\, t \rightarrow 0.$$
This theorem was extended  by Bray \cite{Bray} to  higher dimensional Euclidean spaces in a more general setting using multipliers by modifying the technique given in the seminal paper of Platonov \cite{Platonov} in the case of rank one noncompact symmetric spaces. For an overview of extensions of this theorem in different settings we refer to \cite{DDR, Younis1, Younis2}. In Theorem B' and Theorem \ref{Them4.1} we extend this theorem to the  setting of Helgason Fourier transform on Damek-Ricci spaces. 

To state our main results briefly, we first need to define the H$\ddot{\text{o}}$lder-Lipschitz spaces $\textnormal{Lip}_{S}(\alpha, p)$ in a harmonic $NA$ group $S$.  We define $\textnormal{Lip}_{S}(\alpha, p)$ using the spherical mean operator $M_t,\, t\in \mathbb{R}_+,$  on Damek-Ricci spaces (see Section \ref{Ess} for definition and more details). The use of the  spherical mean operator is inspired by the work Platonov \cite{Platonov, Platonov1} and Bray \cite{Bray}. In the setting of Damek-Ricci spaces, the spherical mean operator was used in \cite{KRS} to answer some questions initially posed by Bray and Pinsky \cite{BrayPinsky} regarding the growth of Fourier transform. Thus, the H$\ddot{\text{o}}$lder-Lipschitz space $\textnormal{Lip}_{S}(\alpha, p)$ is defined as
  $$\textnormal{Lip}_{S}(\alpha, p):= \{f \in L^p(S): \|M_tf-f\|_p= O(t^\alpha)\,\,\,\,\, \text{as}\,\, t \rightarrow 0\}.$$
  Now, we state both of our results which are the suitable analogues of Titchmarsh theorems in the context of harmonic $NA$ groups. We denote by $\widetilde{f}$ the (Helgason) Fourier transform of $f$ on the  harmonic $NA$ group $S$ (see \cite{ACB97}).  Also, we set  $\gamma_p= \frac{2}{p}-1$ and so $\gamma_{p'}=\frac{2}{p'}-1=-\gamma_p,$ where $p'$ is the Lebesgue conjugate of $p.$ 
  
  {\bf Theorem A'.} \label{A'}  Let $S$ be a harmonic NA group of dimension $d$. Let $0 <\alpha \leq 1,\, 1<p \leq 2,$ and let $p'$ be  such that $\frac{1}{p}+\frac{1}{p'}=1.$
    Let $f \in \text{Lip}_S(\alpha; p).$ Then
    the function $F$ defined by 
$$F(\lambda):=   \left(\int_N |\widetilde{f}(\lambda+i \gamma_{p'} \rho, n)|^{p'} \,dn \right)^{\frac{1}{p'}},\,\, 1<p \leq 2,
$$
    belongs to $ L^\beta((0, \infty), |c(\lambda)|^{-2}\, d \lambda)$ provided that 
    \begin{align*} \label{beta}
        \frac{dp}{dp+\alpha p-d} < \beta \leq p'.
    \end{align*}

{\bf Theorem B'.}  Let $0<\alpha \leq 1$ and $f \in L^2(S).$ Then $f \in \text{Lip}_S(\alpha; 2)$ if and only if 
	$$ \int_{\lambda >\frac{1}{t}} \int_N |\widetilde{f}(\lambda, n)|^2 dn |c(\lambda)|^{-2} \, d \lambda = O(t^{2 \alpha})\,\,\,\,\text{as}\,\, t \rightarrow 0.$$
	
	We also prove the aforementioned theorem for Dini-Lipschitz spaces on harmonic $NA$ groups. As an application of the above characterisation we show a Fourier multiplier theorem for H$\ddot{\text{o}}$lder-Lipschitz spaces on Damek-Ricci spaces.

	\begin{cor} \label{Coro1}
	 Let $0 \leq \gamma <1$ and let $h$ be an even bounded  measurable function on $\mathbb{R}$ such that 
	 $$|h(\lambda)|\leq C \langle \lambda \rangle^{-\gamma},$$ where $\langle \lambda \rangle = (\lambda^2+\frac{Q^2}{2})^{\frac{1}{2}},$ and $Q$ denotes the homogeneous dimension of $N.$ 
	 Let $A$ be the Fourier multiplier with symbol $h$, i.e., given by $\widetilde{Af}(\lambda, n)= h(\lambda) \widetilde{f}(\lambda, n)$ for all $\lambda \in \mathbb{R}_+$ and $n \in N.$ Then 
	 $$A:\text{Lip}_S(\alpha; 2) \rightarrow \text{Lip}_S(\alpha+\gamma;2)$$ is bounded for all $\alpha$ such that $0<\alpha<1-\gamma.$
	\end{cor}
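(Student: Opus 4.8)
The plan is to reduce the claim to the spectral characterisation of the Lipschitz classes furnished by Theorem B'. Since the symbol is bounded, $|h(\lambda)| \le C\langle\lambda\rangle^{-\gamma} \le C(Q^2/2)^{-\gamma/2}$, the Plancherel theorem for the Helgason transform shows that $A$ is bounded on $L^2(S)$; hence $Af \in L^2(S)$ and Theorem B' applies to both $f$ and $Af$. Introduce the tail functions
$$\phi(s) := \int_{\lambda > s} \int_N |\widetilde{f}(\lambda, n)|^2 \, dn \, |c(\lambda)|^{-2} \, d\lambda, \qquad \psi(s) := \int_{\lambda > s} \int_N |\widetilde{Af}(\lambda, n)|^2 \, dn \, |c(\lambda)|^{-2} \, d\lambda.$$
By Theorem B', the hypothesis $f \in \text{Lip}_S(\alpha;2)$ is equivalent to $\phi(s) = O(s^{-2\alpha})$ as $s \to \infty$, and — since $0 < \alpha+\gamma < 1$ under the assumption $\alpha < 1-\gamma$ — the conclusion $Af \in \text{Lip}_S(\alpha+\gamma;2)$ is equivalent to $\psi(s) = O(s^{-2(\alpha+\gamma)})$. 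Thus the task is purely one of transferring a tail-decay rate through the multiplier.

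For the core estimate I would use the multiplier identity $\widetilde{Af}(\lambda,n) = h(\lambda)\widetilde{f}(\lambda,n)$ and the symbol bound. As $\langle\lambda\rangle \ge \lambda$ gives $|h(\lambda)|^2 \le C^2\lambda^{-2\gamma}$ for $\lambda>0$, the tail of $Af$ is controlled by a weighted tail of $f$:
$$\psi(s) = \int_{\lambda > s} |h(\lambda)|^2 \, d\mu_f(\lambda) \le C^2 \int_{\lambda > s} \lambda^{-2\gamma}\, d\mu_f(\lambda),$$
where $\mu_f$ denotes the positive Plancherel measure $d\mu_f(\lambda) = \int_N |\widetilde{f}(\lambda,n)|^2\, dn\,|c(\lambda)|^{-2}\,d\lambda$, so that $\phi(s) = \mu_f((s,\infty))$. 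The cleanest way to finish is a dyadic decomposition: splitting $\{\lambda>s\}$ into the shells $\{2^k s < \lambda \le 2^{k+1}s\}$, on each shell $\lambda^{-2\gamma}\le (2^k s)^{-2\gamma}$ (using $\gamma\ge 0$), while the $\mu_f$-mass of the shell is at most $\phi(2^k s) = O((2^k s)^{-2\alpha})$, whence
$$\psi(s) \lesssim \sum_{k=0}^\infty (2^k s)^{-2\gamma}(2^k s)^{-2\alpha} = s^{-2(\alpha+\gamma)}\sum_{k=0}^\infty 2^{-2k(\alpha+\gamma)} = O(s^{-2(\alpha+\gamma)}),$$
the geometric series converging precisely because $\alpha+\gamma > 0$. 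An equivalent route is Stieltjes integration by parts against $d\mu_f = -d\phi$, producing a boundary term $s^{-2\gamma}\phi(s)$ and an integral $2\gamma\int_s^\infty \phi(\lambda)\lambda^{-2\gamma-1}\,d\lambda$, both $O(s^{-2(\alpha+\gamma)})$, where the convergence of the latter again uses $\alpha+\gamma>0$.

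The one genuinely delicate point is that the extra decay $\lambda^{-2\gamma}$ from the symbol must be absorbed into the \emph{exponent} of $s$ rather than merely into the constant; this is exactly what the summation (or integration-by-parts) step achieves, and it requires the pointwise nature of the symbol bound together with the monotone tail estimate for $\phi$. The hypotheses $\gamma \ge 0$ and $\alpha < 1-\gamma$ play only auxiliary roles: the former guarantees that $\langle\lambda\rangle^{-2\gamma}$ is a genuine nonincreasing gain on the range of integration, and the latter keeps the target exponent below $1$ so that $\text{Lip}_S(\alpha+\gamma;2)$ is a bona fide Lipschitz class to which Theorem B' can be applied.
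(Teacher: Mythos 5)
Your proposal is correct, and at the structural level it follows the same strategy as the paper: apply the characterisation of Theorem B' (Theorem \ref{Titch1}) twice, once to $f$ and once to $Af$, and transfer the tail decay through the multiplier identity $\widetilde{Af}(\lambda,n)=h(\lambda)\widetilde{f}(\lambda,n)$. Your preliminary bookkeeping (Plancherel plus boundedness of $h$ gives $Af\in L^2(S)$, and $\alpha<1-\gamma$ keeps $\alpha+\gamma$ inside the admissible range of Theorem B') is exactly what is needed, and the paper leaves it implicit. The only real difference is the execution of the tail estimate, and here the paper is considerably shorter: on the region $\lambda>\frac{1}{t}$ one has $\langle\lambda\rangle\geq\lambda>\frac{1}{t}$, hence $\langle\lambda\rangle^{-2\gamma}\leq t^{2\gamma}$ because $\gamma\geq 0$, so
\begin{equation*}
\int_{\lambda>\frac{1}{t}}\int_N|\widetilde{Af}(\lambda,n)|^2\,dn\,|c(\lambda)|^{-2}\,d\lambda
\;\leq\; C\, t^{2\gamma}\int_{\lambda>\frac{1}{t}}\int_N|\widetilde{f}(\lambda,n)|^2\,dn\,|c(\lambda)|^{-2}\,d\lambda
\;=\;O\bigl(t^{2(\alpha+\gamma)}\bigr),
\end{equation*}
with no decomposition at all. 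Your dyadic-shell argument (and the equivalent Stieltjes integration by parts) reaches the same bound and is correct, but your remark that the ``genuinely delicate point'' is absorbing $\lambda^{-2\gamma}$ into the exponent of $s$ rather than the constant is overstated: the factor one pulls out of the integral is $s^{-2\gamma}$, which already depends on $s$, so the uniform pointwise bound on the tail region combined with $\phi(s)=O(s^{-2\alpha})$ produces the exponent $s^{-2(\alpha+\gamma)}$ directly, with no summation and no use of $\alpha+\gamma>0$. What your scheme buys in exchange is robustness: the shell decomposition would survive if the pointwise gain $\lambda^{-2\gamma}$ were replaced by a weight that is not monotone, or even one growing slower than $\lambda^{2\alpha}$, situations where the crude supremum over the whole tail is useless; for the corollary as stated, that extra generality is not needed.
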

The multiplier theorem in Corollary \ref{Coro1} complements the other known multipliers theorems on Damek-Ricci spaces \cite{Anker96}. 

Now, we would like to divert our attention to the second part of the paper where we establish an $L^p$-$L^q$  multiplier theorem for the Jacobi transform. In particular these results are true for the radial multipliers on Damek-Ricci spaces as the spherical analysis on Damek-Ricci spaces fits well in the setting of Jacobi analysis as observed in \cite{Anker96}. Here we deal with $L^p$-$L^q$ multipliers as opposed to the $L^p$-multipliers for which theorems of Mihlin-H$\ddot{\text{o}}$rmander or Marcinkiewicz type provide results for Fourier multiplier in different settings based on the regularity of the symbol. We cite here \cite{Hormander, Hormander1960,Anker90, Anker92, Anker96, Cow3, Cow1, Cow2, Cow4, Ruzwirth,Del, John, Astengo2, Astengo, MPR, Bloom} to mention a few of them.  In \cite{Anker96}, several radial $L^p$-multipliers theorems have been established including the Mihlin-H$\ddot{\text{o}}$rmander multiplier theorem on Damek-Ricci spaces.    The Paley-type inequality describes the growth of the Fourier transform of a function in terms of its $L^p$-norm. Interpolating the Paley-inequality with the Hausdorff-Young inequality one can obtain the following H\"ormander's version of the  Hausdorff-Young-Paley inequality,
\begin{equation}\label{3}
    \left(\int\limits_{\mathbb{R}^n}|(\mathscr{F}f)(\xi)\phi(\xi)^{ \frac{1}{r}-\frac{1}{p'} }|^r\right)^{\frac{1}{r}}\leq \Vert f \Vert_{L^p(\mathbb{R}^n)},\,\,\,1<p\leq r\leq p'<\infty, \,\,1<p<2.
\end{equation} Also, as a consequence  of the Hausdorff-Young-Paley inequality, H\"ormander \cite[page 106]{Hormander1960} proves that the condition 
\begin{equation}\label{4}
    \sup_{t>0}t^b\{\xi\in \mathbb{R}^n:m(\xi)\geq t\}<\infty,\quad \frac{1}{p}-\frac{1}{q}=\frac{1}{b},
\end{equation}where $1<p\leq 2\leq q<\infty,$ implies the existence of a bounded extension of a Fourier multiplier $T_{m}$ with symbol $m$ from $L^p(\mathbb{R}^n)$ to $L^q(\mathbb{R}^n).$ Recently, the third author with his collaborator R. Akylzhanov  studied H\"ormander classical results for unimodular locally compact groups and homogeneous spaces \cite{ARN, AR}. In \cite{AR}, the key idea behind the extension of H\"ormander theorem is the reformulation of this theorem as follows: 
$$\|T_m\|_{L^p(\mathbb{R}^n) \rightarrow L^q(\mathbb{R}^n)} \lesssim \sup_{s>0} s\left( \int_{ \{\xi \in \mathbb{R}^n :\, m(\xi) \geq s\} } d\xi \right)^{\frac{1}{p}-\frac{1}{q}} \simeq \|m\|_{L^{r, \infty}(\mathbb{R}^n)} \simeq \|T_m\|_{L^{r, \infty}(\textnormal{VN}(\mathbb{R}^n))},$$ where $\frac{1}{r}=\frac{1}{p}-\frac{1}{q},$ $\|m\|_{L^{r, \infty}(\mathbb{R}^n)}$ is the Lorentz norm of $m,$ and $\|T_m\|_{L^{r, \infty}(\textnormal{VN}(\mathbb{R}^n))}$ is the norm of the operator $T_m$ in the Lorentz space on the group von Neumann algebra $\textnormal{VN}(\mathbb{R}^n)$ of $\mathbb{R}^n.$ They use the Lorentz spaces and group von Neumann algebra techniques for extending it to general locally compact unimodular groups. The unimodularity assumption has its own advantages such as existence of the canonical trace on the group von Neumann algebra and consequently,  Plancherel formula and the Hausdorff-Young inequality. They also pointed out that the unimodularity can be avoided by using the Tomita-Takesaki modular theory and the Haagerup reduction technique. In our case, Harmonic $NA$ groups are nonunimodular groups but we are dealing with the Helgason Fourier transform and Harish Chandra transform instead of the group Fourier transform. So, the groups von Neumann algebra techniques are more intricate to implement. 
Untill now, there does not exist a suitable and optimal version of the  Hausdorff-Young inequality for the (Helgason) Fourier transform for Damek-Ricci space, in particular, for rank one noncompact symmetric spaces, although several attempts has been made for it and consequently, different version of the Hausdorff-Young inequality were proved, for example, see \cite{RS, MRSS}. On the other hand, spherical analysis on Damek-Ricci spaces (\cite{Anker96})  fits perfectly in the  well-established Jacobi analysis setup \cite{Koorn, FK,FK2}. Therefore, we choose to work in the setting of Jacobi analysis, in particular, the spherical analysis on Damek-Ricci spaces. 
In this paper, we prove Paley-type inequality, Hausdorff-Young-Paley inequality and H\"ormander multiplier theorem for Jacobi transform on half line $\mathbb{R}_+$. For rest of the section, we assume that $\alpha \neq -1, -2, \ldots$ and $\alpha \geq \beta > \frac{-1}{2}.$ We set $A_{\alpha, \beta}(t)= (2 \sinh{t})^{2\alpha+1} (2 \cosh{t})^{2\beta+1},\,\,t>0$ and denote by $c(\lambda),$ a multiple of the  meromorphic Harish-Chandra function given by the formula
	$$c(\lambda):=\frac{2^{\rho-i\lambda} \Gamma(\alpha+1) \Gamma (i\lambda)}{ \Gamma(\frac{1}{2}(\rho+i\lambda)) \Gamma(\frac{1}{2}(\rho+i \lambda)-\beta)}.$$ We refer to Section \ref{Jacobinota} for more details and all the notation used here.
The following theorem is an analogue of the  Paley inequality. 

\begin{thm}[\bf Paley-type inequality]
	      Suppose that $\psi$ is a positive function  on $\mathbb{R}_+$  satisfying the condition 
	    \begin{equation}
	        M_\psi := \sup_{t>0} t \int_{\underset{\psi(\lambda)>t}{\lambda \in \mathbb{R}_+}} |c(\lambda)|^{-2}\, d\lambda <\infty.
	        \end{equation}
	        Then for  $f \in L^p(\mathbb{R}_+, A_{\alpha, \beta}(t) dt),$ $1<p\leq 2,$ we have 
	        \begin{align} \label{Paleyin}
	            \left( \int_0^\infty |\widehat{f}(\lambda)|^p\, \psi(\lambda)^{2-p} |c(\lambda)|^{-2}\, d\lambda \right)^{\frac{1}{p}} \lesssim M_{\psi}^{\frac{2-p}{p}}\, \|f\|_{L^p(\mathbb{R}_+, A_{\alpha, \beta}(t) dt)}.
	        \end{align}
	\end{thm}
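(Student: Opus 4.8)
The plan is to recast the weighted inequality \eqref{Paleyin} as an $L^p\to L^p$ estimate for a single sublinear operator acting between two measure spaces, and then to obtain it by Marcinkiewicz interpolation between the endpoints $p=1$ and $p=2$, in the spirit of H\"ormander's original argument \cite{Hormander1960} and its operator-theoretic reworking by Akylzhanov and Ruzhansky \cite{AR, ARN}. Concretely, I would introduce the measure $d\mu(\lambda):=\psi(\lambda)^2|c(\lambda)|^{-2}\,d\lambda$ on $\mathbb{R}_+$ and the operator $Tf(\lambda):=\widehat{f}(\lambda)/\psi(\lambda)$. Since
\[
\int_0^\infty |\widehat{f}(\lambda)|^p\,\psi(\lambda)^{2-p}|c(\lambda)|^{-2}\,d\lambda=\int_0^\infty\left|\frac{\widehat{f}(\lambda)}{\psi(\lambda)}\right|^p d\mu(\lambda)=\|Tf\|_{L^p(\mu)}^p,
\]
the asserted inequality is equivalent to the bound $\|Tf\|_{L^p(\mu)}\lesssim M_\psi^{(2-p)/p}\,\|f\|_{L^p(\mathbb{R}_+,A_{\alpha,\beta}(t)dt)}$ for $1<p\le 2$.

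For the endpoint $p=2$, the Plancherel theorem for the Jacobi transform gives $\|Tf\|_{L^2(\mu)}^2=\int_0^\infty|\widehat{f}(\lambda)|^2|c(\lambda)|^{-2}\,d\lambda=\|f\|_{L^2(\mathbb{R}_+,A_{\alpha,\beta}(t)dt)}^2$, so $T$ is bounded $L^2\to L^2(\mu)$ with norm independent of $\psi$. For the endpoint $p=1$ I would establish the weak-type $(1,1)$ estimate $\mu(\{\lambda:|Tf(\lambda)|>s\})\lesssim M_\psi\,s^{-1}\|f\|_{L^1(\mathbb{R}_+,A_{\alpha,\beta}(t)dt)}$. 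Here I first use the uniform boundedness of the Jacobi functions $\phi_\lambda^{(\alpha,\beta)}$ on the real axis to obtain the trivial $L^1\to L^\infty$ bound $|\widehat{f}(\lambda)|\le\|f\|_{L^1(\mathbb{R}_+,A_{\alpha,\beta}(t)dt)}$. Writing $v:=\|f\|_{L^1}/s$, this forces $\{|Tf|>s\}\subseteq\{\psi<v\}$, so the matter reduces to estimating $\int_{\{\psi<v\}}\psi^2|c|^{-2}\,d\lambda$.

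The crux of the argument is this last estimate, which is where the hypothesis on $M_\psi$ enters. I would write $\psi(\lambda)^2=2\int_0^\infty\tau\,\mathbf{1}_{\{\tau<\psi(\lambda)\}}\,d\tau$ and apply Fubini to get
\[
\int_{\{\psi<v\}}\psi^2|c|^{-2}\,d\lambda=2\int_0^v\tau\left(\int_{\{\tau<\psi(\lambda)<v\}}|c(\lambda)|^{-2}\,d\lambda\right)d\tau\le 2\int_0^v\tau\cdot\frac{M_\psi}{\tau}\,d\tau=2M_\psi v,
\]
using $\int_{\{\psi(\lambda)>\tau\}}|c|^{-2}\,d\lambda\le M_\psi/\tau$ straight from the definition of $M_\psi$. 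This yields the weak $(1,1)$ bound with constant $\lesssim M_\psi$. Finally, Marcinkiewicz interpolation between the strong $(2,2)$ bound (constant $\sim 1$) and the weak $(1,1)$ bound (constant $\sim M_\psi$) produces, for the interpolation parameter $\theta$ determined by $1/p=(1-\theta)+\theta/2$, the factor $M_\psi^{1-\theta}=M_\psi^{(2-p)/p}$, which is exactly the claimed dependence. The only genuinely analytic inputs are the Plancherel identity and the uniform boundedness of the Jacobi functions on $\mathbb{R}_+$; everything else is the layer-cake computation above, so I expect the Fubini/distribution-function step to be the main technical point to get right.
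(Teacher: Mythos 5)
Your proposal is correct and follows essentially the same route as the paper: the same sublinear operator $Tf=\widehat{f}/\psi$ viewed as a map into $L^p$ of the measure $\psi^2|c(\lambda)|^{-2}\,d\lambda$, the strong $(2,2)$ bound from Plancherel, the weak $(1,1)$ bound via $|\widehat{f}(\lambda)|\le\|f\|_{L^1}$ combined with the layer-cake/Fubini estimate $\int_{\{\psi<v\}}\psi^2|c|^{-2}\,d\lambda\lesssim M_\psi v$, and Marcinkiewicz interpolation yielding the constant $M_\psi^{(2-p)/p}$. The only cosmetic difference is that the paper writes the layer-cake step as $\psi^2=\int_0^{\psi^2}d\tau$ followed by the substitution $\tau=t^2$, which is identical to your formulation $\psi^2=2\int_0^\infty\tau\,\mathbf{1}_{\{\tau<\psi\}}\,d\tau$.
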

By interpolating the Hausdorff-Young inequality and Paley-type inequality we get the following Hausdorff-Young-Paley inequality for the Jacobi transform.
\begin{thm}[\bf Hausdorff-Young-Paley inequality]  Let $1<p\leq 2,$ and let   $1<p \leq b \leq p' \leq \infty,$ where $p'= \frac{p}{p-1}.$ If $\psi(\lambda)$ is a positive function on $\mathbb{R}_+$ such that 
 \begin{equation}
	        M_\psi := \sup_{t>0} t \int_{\underset{\psi(\lambda)>t}{\lambda \in \mathbb{R}_+}} |c(\lambda)|^{-2}\, d\lambda
	        \end{equation}
is finite, then for every $f \in L^p(\mathbb{R}_+, A_{\alpha, \beta}(t) dt)$ 
 we have
\begin{equation} 
    \left( \int_{\mathbb{R}_+}  \left( |\widehat{f}(\lambda)| \psi(\lambda)^{\frac{1}{b}-\frac{1}{p'}} \right)^b |c(\lambda)|^{-2}\, d\lambda  \right)^{\frac{1}{b}} \lesssim M_\varphi^{\frac{1}{b}-\frac{1}{p'}} \|f\|_{L^p(\mathbb{R}_+, A_{\alpha, \beta}(t) dt)}.
\end{equation}
\end{thm}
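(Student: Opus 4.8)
The plan is to obtain the estimate by interpolating the two endpoint inequalities already available: the Paley-type inequality proved just above, which plays the role of the endpoint $b=p$, and the Hausdorff-Young inequality for the Jacobi transform, which is the endpoint $b=p'$. To set this up, write $d\nu(\lambda):=|c(\lambda)|^{-2}\,d\lambda$ for the Plancherel measure on $\mathbb{R}_+$ and regard $T\colon f\mapsto \widehat f$ as a single linear operator with fixed domain $L^p(\mathbb{R}_+,A_{\alpha,\beta}(t)\,dt)$, $1<p\le 2$, taking values in weighted Lebesgue spaces over $(\mathbb{R}_+,d\nu)$. Raising the Paley inequality \eqref{Paleyin} to the $p$-th power shows that $T$ is bounded from $L^p(A_{\alpha,\beta}\,dt)$ into $L^p(\psi^{2-p}\,d\nu)$ with operator norm $\lesssim M_\psi^{(2-p)/p}$, while the Hausdorff-Young inequality states that $T$ is bounded from the same domain into the unweighted space $L^{p'}(d\nu)$ with norm $\lesssim 1=M_\psi^0$.

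With the two endpoints phrased this way, I would invoke the Stein-Weiss interpolation theorem with change of measure (cf. Bergh-L\"ofstr\"om). Here the domain is unchanged across the endpoints, so only the target exponent and the target weight interpolate. For $0<\theta<1$ define $b$ by $\tfrac1b=\tfrac{1-\theta}{p}+\tfrac{\theta}{p'}$, so that $b$ ranges over $[p,p']$ as $\theta$ runs over $[0,1]$. The change-of-measure formula then produces the interpolated target weight $\psi^{b(1-\theta)(2-p)/p}$ together with the operator-norm bound $\bigl(M_\psi^{(2-p)/p}\bigr)^{1-\theta}\cdot 1^{\theta}=M_\psi^{(1-\theta)(2-p)/p}$.

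The single computation that makes everything fit is the elementary identity
$$\frac1b-\frac1{p'}=(1-\theta)\Bigl(\frac1p-\frac1{p'}\Bigr)=(1-\theta)\,\frac{2-p}{p}.$$
Using it, the weight exponent becomes $b(1-\theta)(2-p)/p=1-\tfrac{b}{p'}$, so the target space is exactly $L^b(\psi^{1-b/p'}\,d\nu)$; since $\psi^{1-b/p'}=\psi^{b(1/b-1/p')}$, its norm is precisely the left-hand side of the asserted inequality. At the same time the bound $M_\psi^{(1-\theta)(2-p)/p}$ becomes $M_\psi^{1/b-1/p'}$, matching the right-hand side (the $M_\varphi$ appearing in the statement should read $M_\psi$).

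The proof is therefore essentially bookkeeping once the interpolation machinery is in place, and I expect the only genuine care to lie in justifying the applicability of change-of-measure interpolation: checking that $T$ is a well-defined linear map on a common dense subspace (for instance compactly supported functions, on which $\widehat f$ is unambiguous), that the Plancherel measure $d\nu$ is $\sigma$-finite, and that the positivity and measurability of $\psi$ make the interpolated weights admissible. Tracking the three interlocking exponents—the one defining $b$, the weight exponent, and the power of $M_\psi$—through the Stein-Weiss formulas is the step most prone to error, but the displayed identity reconciles all of them simultaneously.
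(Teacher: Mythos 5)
Your proposal is correct and follows essentially the same route as the paper: the paper also interpolates the Paley-type inequality (endpoint $b=p$, weight $\psi^{2-p}$, norm $M_\psi^{(2-p)/p}$) against the Hausdorff--Young inequality (endpoint $b=p'$, weight $1$) using the Bergh--L\"ofstr\"om real interpolation theorem with change of measure, with the same parameter $\theta$ defined by $\tfrac1b=\tfrac{1-\theta}{p}+\tfrac{\theta}{p'}$ and the same resulting weight $\psi^{1-b/p'}$ and bound $M_\psi^{1/b-1/p'}$. Your observation that $M_\varphi$ in the statement is a typo for $M_\psi$ is also correct.
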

Finally we establish the following $L^p$-$L^q$ boundedness result for multipliers of Jacobi transform.
\begin{thm} \label{Jacobimultin}  Let $1<p \leq 2 \leq q<\infty$. Suppose that $T$ is a Jacobi-Fourier multiplier with symbol $h,$ that is, $$\widehat{Tf}(\lambda)= h(\lambda) \widehat{f}(\lambda),\,\,\,\lambda \in \mathbb{R}_+ ,$$
 where $h$ is an bounded measurable even function on $\mathbb{R}.$  Then we have 
$$\|T\|_{L^p(\mathbb{R}_+, A_{\alpha, \beta}(t) dt) \rightarrow L^q(\mathbb{R}_+, A_{\alpha, \beta}(t) dt)}\lesssim \sup_{s>0} s \left[ \int_{\{ \lambda \in \mathbb{R}_+: |h(\lambda)|>s\}} |c(\lambda)|^{-2}\, d\lambda \right]^{\frac{1}{p}-\frac{1}{q}}.$$
   \end{thm}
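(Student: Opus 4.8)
The plan is to deduce the theorem from the inverse Hausdorff--Young inequality for the Jacobi transform together with the Hausdorff--Young--Paley inequality proved above, supplemented by a duality argument to cover the whole range $1<p\le 2\le q<\infty$. Throughout put $d\mu(\lambda):=|c(\lambda)|^{-2}\,d\lambda$, abbreviate $\|\cdot\|_{L^p(A_{\alpha,\beta})}:=\|\cdot\|_{L^p(\mathbb{R}_+,A_{\alpha,\beta}(t)\,dt)}$, and write
\[
\mathcal{C}_h:=\sup_{s>0}s\left(\int_{\{\lambda\in\mathbb{R}_+:\,|h(\lambda)|>s\}}d\mu(\lambda)\right)^{\frac1p-\frac1q}
\]
for the quantity on the right-hand side of the assertion, which I take to be finite.

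First I would reduce the claim to a weighted estimate on $h\widehat f$. Since $q\ge 2$, the inverse Jacobi transform is bounded from $L^{q'}(\mathbb{R}_+,d\mu)$ into $L^q(\mathbb{R}_+,A_{\alpha,\beta}(t)\,dt)$; this is the inverse Hausdorff--Young inequality, dual to the Hausdorff--Young inequality underlying the Paley-type inequality. Applying it to $Tf$ and using $\widehat{Tf}=h\widehat f$ gives
\[
\|Tf\|_{L^q(A_{\alpha,\beta})}\le \|h\widehat f\|_{L^{q'}(d\mu)},
\]
so it suffices to show $\|h\widehat f\|_{L^{q'}(d\mu)}\lesssim \mathcal{C}_h\,\|f\|_{L^p(A_{\alpha,\beta})}$.

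The core of the argument is to recognise this weighted norm as a special instance of the Hausdorff--Young--Paley inequality. Suppose first that $\frac1p+\frac1q\ge 1$, equivalently $p\le q'\le p'$, so that $b:=q'$ is an admissible exponent there. I would then choose $\psi:=|h|^{\kappa}$ with $\kappa:=\frac{q'p'}{p'-q'}$, the exponent designed so that the Paley weight $\psi^{\,b(1/b-1/p')}=\psi^{\,1-q'/p'}$ equals $|h|^{q'}$. Consequently
\[
\|h\widehat f\|_{L^{q'}(d\mu)}=\left(\int_{\mathbb{R}_+}\big(|\widehat f(\lambda)|\,\psi(\lambda)^{\frac1{q'}-\frac1{p'}}\big)^{q'}\,d\mu(\lambda)\right)^{\frac1{q'}}\lesssim M_\psi^{\frac1{q'}-\frac1{p'}}\,\|f\|_{L^p(A_{\alpha,\beta})}.
\]
Since $\{\psi>t\}=\{|h|>t^{1/\kappa}\}$, the substitution $s=t^{1/\kappa}$ gives $M_\psi=\sup_{s>0}s^{\kappa}\,\mu\{|h|>s\}$, and because $\kappa\big(\tfrac1{q'}-\tfrac1{p'}\big)=1$ and $\tfrac1{q'}-\tfrac1{p'}=\tfrac1p-\tfrac1q$, we obtain exactly $M_\psi^{1/q'-1/p'}=\mathcal{C}_h$. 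This settles the range $\frac1p+\frac1q\ge1$.

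To cover the complementary range $\frac1p+\frac1q<1$, in which $q'<p$ and the choice $b=q'$ is no longer admissible, I would pass to the adjoint. The adjoint $T^{\ast}$ is the Jacobi multiplier with symbol $\overline h$, and $\|T\|_{L^p\to L^q}=\|T^{\ast}\|_{L^{q'}\to L^{p'}}$; here $1<q'\le 2\le p'<\infty$ and the admissibility condition for $T^{\ast}$ becomes $q'\le p\le q$, which now holds. Running the previous two steps for $T^{\ast}$ with admissible exponent $b=p$, and noting that $|h|$ and $|\overline h|$ have the same distribution with respect to $d\mu$, produces the same constant $\mathcal{C}_h$ and completes the proof. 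I expect the only genuine obstacle to be the bookkeeping around the endpoint $\frac1p+\frac1q=1$: one must arrange the case split and the exponent $\kappa$ so that the admissibility constraint $1<p\le b\le p'$ of the Hausdorff--Young--Paley inequality is respected while the power of $\psi$ still reproduces precisely the level-set constant $\mathcal{C}_h$.
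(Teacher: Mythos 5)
Your proposal is correct and takes essentially the same route as the paper's own proof: reduce via the dual Hausdorff--Young inequality to bounding $\|h\widehat f\|_{L^{q'}(|c(\lambda)|^{-2}\,d\lambda)}$, apply the Hausdorff--Young--Paley inequality with $b=q'$ and $\psi=|h|^{r}$ where $\tfrac1r=\tfrac1p-\tfrac1q$ (your $\kappa$ is exactly this $r$), and handle the complementary range $q'<p$ by passing to the adjoint multiplier, whose symbol has the same modulus as $h$. The only cosmetic difference is that you organise the case split by the condition $\tfrac1p+\tfrac1q\ge 1$ and verify the exponent arithmetic explicitly, which the paper leaves more implicit.
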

   
   Now, we apply Theorem \ref{Jacobimultin} to prove the $L^p$-$L^q$ boundedness of spectral multipliers for Jacobi Laplacian $L:=-\mathcal{L}_{\alpha, \beta}=- \frac{d^2}{dt^2}- ((2\alpha+1)\coth{t}(2\beta+1)\tanh{t}) \frac{d}{dt}.$ Set $\rho=\alpha+\beta+1.$
   
   \begin{thm} 
Let $1<p \leq 2 \leq q <\infty$ and let $\varphi$ be a monotonically  decreasing continuous function on $[\rho^2, \infty)$ such that $\lim_{u \rightarrow \infty}\varphi(u)=0.$ Then we have 
\begin{equation*}
    \|\varphi(L)\|_{\textnormal{op}} \lesssim \sup_{u>\rho^2} \varphi(u)  \begin{cases} (u-\rho^2)^{(\frac{1}{p}-\frac{1}{q})} & \quad \textnormal{if} \quad (u-\rho^2)^{\frac{1}{2}} \leq 1, \\ (u-\rho^2)^{(\alpha+1)(\frac{1}{p}-\frac{1}{q})} & \quad \textnormal{if} \quad (u-\rho^2)^{\frac{1}{2}}>1,  \end{cases}
\end{equation*} where  $\|\cdot\|_{\textnormal{op}} $ denotes the operator norm from $L^p(\mathbb{R}_+, A_{\alpha, \beta}(t) dt)$ to $L^q(\mathbb{R}_+, A_{\alpha, \beta}(t) dt).$ 
\end{thm}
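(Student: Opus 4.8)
The strategy is to realise $\varphi(L)$ as a Jacobi--Fourier multiplier and then feed it into Theorem~\ref{Jacobimultin}. The Jacobi functions $\phi_\lambda$ diagonalise $\mathcal{L}_{\alpha,\beta}$: one has $\mathcal{L}_{\alpha,\beta}\phi_\lambda=-(\lambda^2+\rho^2)\phi_\lambda$, so that $L=-\mathcal{L}_{\alpha,\beta}$ has continuous spectrum $[\rho^2,\infty)$ and acts on the Jacobi transform side as multiplication by $\lambda^2+\rho^2$. By the spectral theorem, $\varphi(L)$ is therefore the Jacobi--Fourier multiplier with symbol
\[
h(\lambda):=\varphi(\lambda^2+\rho^2),\qquad \widehat{\varphi(L)f}(\lambda)=\varphi(\lambda^2+\rho^2)\,\widehat{f}(\lambda).
\]
This $h$ is automatically even in $\lambda$, and it is bounded and measurable because $\varphi$ is continuous, nonnegative (it decreases to $0$) and bounded by $\varphi(\rho^2)$. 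Hence Theorem~\ref{Jacobimultin} applies and gives
\[
\|\varphi(L)\|_{\textnormal{op}}\lesssim \sup_{s>0}\, s\left[\int_{\{\lambda\in\mathbb{R}_+:\,|h(\lambda)|>s\}}|c(\lambda)|^{-2}\,d\lambda\right]^{\frac{1}{p}-\frac{1}{q}}.
\]

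Next I would describe the superlevel sets. Since $\lambda\mapsto\lambda^2+\rho^2$ is strictly increasing on $\mathbb{R}_+$ and $\varphi$ is decreasing, the map $\lambda\mapsto h(\lambda)$ is decreasing, so each superlevel set $\{\lambda:h(\lambda)>s\}$ is an interval $[0,\lambda_s)$. Writing $u_s$ for the value determined by $\varphi(u_s)=s$ (using the generalised right-continuous inverse of $\varphi$ if $\varphi$ is only weakly monotone), one gets $\lambda_s=\sqrt{u_s-\rho^2}$ whenever $u_s>\rho^2$ and $\lambda_s=0$ otherwise. Performing the monotone change of variables $s=\varphi(u)$ turns the supremum over $s$ into a supremum over $u>\rho^2$:
\[
\|\varphi(L)\|_{\textnormal{op}}\lesssim \sup_{u>\rho^2}\varphi(u)\left[\int_0^{\sqrt{u-\rho^2}}|c(\lambda)|^{-2}\,d\lambda\right]^{\frac{1}{p}-\frac{1}{q}}.
\]

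It then remains to estimate the Plancherel integral, for which I would invoke the standard two-sided asymptotics of the Harish--Chandra density,
\[
|c(\lambda)|^{-2}\asymp \lambda^2(1+\lambda)^{2\alpha-1},\qquad \lambda\in\mathbb{R}_+,
\]
valid under the running hypothesis $\alpha\geq\beta>-\frac{1}{2}$ and read off directly from the Gamma-function formula for $c(\lambda)$. Setting $\Lambda=\sqrt{u-\rho^2}$, for $\Lambda\leq 1$ one has $\int_0^\Lambda|c(\lambda)|^{-2}\,d\lambda\asymp\int_0^\Lambda\lambda^2\,d\lambda\asymp\Lambda^3\leq\Lambda^2=u-\rho^2$ because $\Lambda\leq1$, while for $\Lambda>1$ the large-$\lambda$ part dominates and $\int_0^\Lambda|c(\lambda)|^{-2}\,d\lambda\asymp\int_1^\Lambda\lambda^{2\alpha+1}\,d\lambda\asymp\Lambda^{2\alpha+2}=(u-\rho^2)^{\alpha+1}$. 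Raising these to the positive power $\frac{1}{p}-\frac{1}{q}$ and reinserting into the displayed bound produces exactly the two-branch estimate in the statement.

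The only genuinely technical ingredient is the two-sided control of $|c(\lambda)|^{-2}$ together with the careful separation of the regimes $\lambda\to0$ and $\lambda\to\infty$; this is where the parameter $\alpha$ enters through the exponent $\alpha+1$ in the large-argument branch. The slack $\Lambda^3\leq\Lambda^2$ in the small-$\Lambda$ regime is what replaces the sharp exponent $\frac{3}{2}\left(\frac{1}{p}-\frac{1}{q}\right)$ by the cleaner (and still valid, if non-sharp) exponent $\frac{1}{p}-\frac{1}{q}$ recorded in the first branch. A secondary point, easily dispatched, is the definition of $\lambda_s$ when $\varphi$ fails to be strictly monotone, handled by the generalised inverse so that the change of variables $s=\varphi(u)$ remains a legitimate monotone reparametrisation of the supremum.
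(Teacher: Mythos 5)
Your proof is correct and follows essentially the same route as the paper's: realise $\varphi(L)$ as the Jacobi--Fourier multiplier with symbol $\varphi(\lambda^2+\rho^2)$, apply Theorem~\ref{Jacobimultin}, substitute $s=\varphi(u)$ using monotonicity, and evaluate the Plancherel integral via the two-regime estimates $|c(\lambda)|^{-2}\asymp\lambda^2$ ($\lambda\leq 1$) and $|c(\lambda)|^{-2}\asymp\lambda^{2\alpha+1}$ ($\lambda>1$). Your closing remark is also exactly right: the paper's own computation yields the sharp exponent $\frac{3}{2}\left(\frac{1}{p}-\frac{1}{q}\right)$ in the small regime (as recorded in Theorem~\ref{specmul}), and the stated first branch with exponent $\frac{1}{p}-\frac{1}{q}$ follows from it precisely by the slack $\Lambda^3\leq\Lambda^2$ for $\Lambda\leq 1$.
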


	\section{ Essentials about harmonic $NA$ groups} \label{Ess}
	
	For basics of harmonic $NA$ groups and Fourier analysis on them, one can refer to papers \cite{Damek,DamekRicci,DamekRicci1, Di-Blasio,Anker96,ACB97,CDK91, RS, KRS, Kaplan, Astengo, Astengo2}. However, we recall necessary definitions, notation and  terminology that we shall use in this paper.
	
	 Let $\mathfrak{n}$ be a two-step nilpotent Lie algebra, equipped with an inner product $\langle \,,\, \rangle$ . Denote by $\mathfrak{z}$ the center of $\mathfrak{n}$ and by $\mathfrak{v}$ the orthogonal complement of $\mathfrak{z}$ in $\mathfrak{n}$ with respect to the inner product of $\mathfrak{n}.$  
	We assume that dimensions of $\mathfrak{v}$ and $\mathfrak{z}$ are $m$ and $l$ respectively as real vector spaces. The Lie algebra $\mathfrak{n}$ is $H$-type algebra if for every $Z \in \mathfrak{z},$ the map $J_Z:\mathfrak{v} \rightarrow \mathfrak{v}$  defined by 
	$$\langle J_Z X, Y\rangle = \langle Z, [X, Y] \rangle,\,\,\,\,\,\,X,Y \in \mathfrak{v},\, Z\in \mathfrak{z},$$ satisfies the condition $J_Z^2=-\|Z\|^2I_{\mathfrak{v}},$ where $I_{\mathfrak{v}}$ is the identity operator on $\mathfrak{v}.$ In \cite{Kaplan}, Kaplan proved that for $Z \in \mathfrak{z}$ with $\|Z\|=1$ one has $J_Z^2=-I_{\mathfrak{v}}$; that is, $J_Z$ induced a complex structure on $\mathfrak{v}$ and hence $m=\dim(\mathfrak{v})$ is always even.  A connected and simply connected Lie group $N$ is called $H$-type if its Lie algebra is of $H$-type. The exponential map is a diffeomorphism as $N$ is nilpotent, we can parametrize the element of $N=\exp{\mathfrak{n}}$ by $(X, Z)$, for $X \in \mathfrak{v}$  and $Z \in \mathfrak{z}.$ The multiplication on $N$ follows from the Campbell-Baker-Hausdorff formula given by 
	$$(X, Z) (X',Z')= (X+X', Z+Z'+\frac{1}{2}[X,X']).$$
	The group $A= \mathbb{R}_+^*$ acts on $N$ by nonisotropic dilations as follows: $(X, Y) \mapsto (a^{\frac{1}{2}}X, aZ).$ Let $S=N \ltimes A$ be the semidirect product of $N$ with $A$ under the aforementioned action. The group multiplication on $S$ is defined by 
	$$(X,Z,a)(X',Z',a')= (X+a^{\frac{1}{2}}X', Z+aZ'+\frac{1}{2}a^{\frac{1}{2}} [X, X'], aa').$$
	Then $S$ is a solvable (connected and simply connected) Lie group with Lie algebra $\mathfrak{s}=\mathfrak{z}\oplus \mathfrak{v} \oplus \mathbb{R}$ and Lie bracket 
	$$[(X, Z, \ell), (X', Z', \ell')]= (\frac{1}{2} \ell X'-\frac{1}{2} \ell'X, \ell Z'-\ell'Z+[X, X]', 0).$$
	The group $S$ is equipped with the left-invariant Riemannian metric induced by 
	$$ \langle (X, Z, \ell), (X', Z', \ell')\rangle= \langle X, X'\rangle+\langle Z, Z'\rangle+\ell \ell'$$ on $\mathfrak{s}.$ The homogneous dimension of $N$ is equal to $\frac{m}{2}+l$ and will be denoted by $Q.$ At times, we also use symbol $\rho$ for $\frac{Q}{2}.$ Hence $\dim(\mathfrak{s})=m+l+1,$ denoted by $d.$ The associated left Haar measure $dx$ on $S$ is given by $a^{-Q-1} dX dZ da,$ where $dX,\, dZ$ and $da$ are the Lebesgue measures on $\mathfrak{v}, \mathfrak{z}$ and $\mathbb{R}_+^*$ respectively. The element of $A$ will be identified with $a_t=e^t,$ $t \in \mathbb{R}.$ The group $S$ can be realized as the unit ball $B(\mathfrak{s})$ in $\mathfrak{s}$ using the Cayley transform $C: S \rightarrow  B(\mathfrak{s})$ (see \cite{Anker96}).
	
	To define (Helgason) Fourier transform on $S$ we need to introduce the notion of the Poisson kernel (\cite{ACB97}). The Poisson Kernel $\mathcal{P}:S \times N \rightarrow \mathbb{R}$ is defined by $\mathcal{P}(na_t, n')= P_{a_t}(n'^{-1}n),$ where $$ P_{a_t}(n)= P_{a_t}(X, Z)=C a_t^Q \left( \left(a_t+\frac{|X|^2}{4} \right)^2+|Z|^2 \right)^{-Q},\,\,\,\, n=(X, Z) \in N.$$
	The value of $C$ is suitably adjusted so that $\int_N P_a(n) dn=1$ and $P_1(n) \leq 1.$ The Poisson kernel satisfies several useful properties (see \cite{KRS,RS,ACB97}), we list here a few of them. For $\lambda \in \mathbb{C},$ the complex power of the Poisson kernel is defined as 
	$$\mathcal{P}_\lambda(x, n)= \mathcal{P}(x, n)^{\frac{1}{2}-\frac{i \lambda}{Q}}.$$ It is known  (\cite{RS, ACB97}) that for each fixed $x \in S,$ $\mathcal{P}_\lambda(x, \cdot) \in L^p(N)$ for $1 \leq p \leq \infty$ if $\lambda = i \gamma_p \rho,$ where $\gamma_p= \frac{2}{p}-1.$
	A very special feature of $\mathcal{P}_\lambda(x,n)$ is that it is constant on the hypersurfaces $H_{n, a_t}=\{n \sigma(a_t n'): \, n' \in N\}.$ Here $\sigma$ is the geodesic inversion on $S,$ that is an involutive, measure-preserving, diffeomorphism which can be explicitly given by \cite{CDK91}:
	\begin{align*}
	    \sigma(X,Z, a_t) = \left( \left(e^t+\frac{|V|^2}{4} \right)^2+|Z|^2 \right)^{-1} \left( \left(- \left( e^t+\frac{|X|^2}{4} \right)+J_Z \right)X, -Z, a_t  \right).
	\end{align*}

	Let $\Delta_S$ be the Laplace-Beltrami operator on $S.$ Then for every fixed $n \in N,$ $\mathcal{P}_\lambda(x, n)$ is an eigenfunction of $\Delta_S$ with eigenvalue $-(\lambda^2+\frac{Q^2}{4})$ (see \cite{ACB97}).
	For a measurable function $f$ on $S,$ the (Helgason) Fourier transform is defined as
	$$\widetilde{f}(\lambda, n)= \int_S f(x)\, \mathcal{P}_\lambda(x, n) dx$$ whenever the integral converges. For $f \in C_c^\infty(S),$ the following inversion formula holds (\cite[Theorem 4.4]{ACB97}):
	$$f(x)= C \int_{\mathbb{R}} \int_N \widetilde{f}(\lambda, n)\,\mathcal{P}_{-\lambda}(x, n) |c(\lambda)|^{-2} \, d\lambda dn.$$ The authors also proved that the (Helgason) Fourier transform extends to an isometry from $L^2(S)$ onto the space $L^2(\mathbb{R}_+ \times N, |c(\lambda)|^{-2}d\lambda dn).$ In fact they have the precise value of constants, we refer the reader to \cite{ACB97}. The following estimates for the function $|c(\lambda)|$ hold:
	$$|c(\lambda)|^{-2}= \begin{cases} |s|^2\quad & |s| \leq 1 \\ |s|^{d-1} \quad& |s| >1\end{cases}$$ for all $\lambda \in \mathbb{R}$ (e. g. see \cite[Theorem 1.14]{MV}).
	In \cite[Theorem 4.6]{RS}, the authors proved the following version of the Hausdorff-Young inequality:
	For $1 \leq p \leq 2$ we have 
	\begin{align}
	    \left(\int_{\mathbb{R}} \int_{N} |\widetilde{f}(\lambda+i \gamma_{p'} \rho, n)|^{p'} dn\, |c(\lambda)|^{-2} d\lambda \right)^{\frac{1}{p'}} \leq 
	C_p \|f\|_p.
	\end{align}
	Let $\tilde{\mu}$ be the metric induced by the canonical  left invariant Riemannian structure on $S$ and let $e$ be the identity element of $S.$  A function $f$ on $S$ is called {\it radial} if for all $x, y \in S ,$ $f(x)=f(y)$ if $\tilde{\mu}(x,e)=\tilde{\mu}(y,e).$ Note that radial functions on $S$ can be identified with the functions $f=f(r)$ of the geodesic distance  $r=\tilde{\mu}(x, e) \in [0, \infty)$ to the identity.  It is clear that $\tilde{\mu}(a_t, e)=|t|$ for $t \in \mathbb{R}.$ At times, for any radial function $f$ we use the notation $f(a_t)=f(t).$ For any function space $\mathcal{F}(S)$ on $S$, the subspace of radial functions will be denoted by $\mathcal{F}(S)^\#.$ 
	The elementary spherical function $\phi_\lambda(x)$ is defined by 
	 $$\phi_\lambda(x) :=\int_N \mathcal{P}_\lambda(x, n) \mathcal{P}_{-\lambda}(x, n)\, dn.  $$
	 It follows (\cite{Anker96, ACB97}) that $\phi_\lambda$ is a radial eigenfunction of the Laplace-Beltrami operator $\Delta_S$ of $S$ with eigenvalue $-(\lambda^2+\frac{Q^2}{4})$ such that $\phi_\lambda(x)=\phi_{-\lambda}(x),\,\, \phi_\lambda(x)=\phi_\lambda(x^{-1})$ and $\phi_\lambda(e)=1.$ 
	 In \cite{Anker96}, the authors showed that the radial part (in geodesic polar coordinates) of the Laplace-Beltrami operator $\Delta_S$ given by 
	 $$\textnormal{rad}\, \Delta_S= \frac{\partial^2}{\partial t}+\{ \frac{m+l}{2} \coth{ \frac{t}{2}}+\frac{k}{2} \tanh{\frac{t}{2}} \} \frac{\partial}{\partial t},$$
	 is (by subtituting $r=\frac{t}{2}$) equal to  $\frac{1}{4} \mathcal{L}_{\alpha, \beta}$  with indices $\alpha=\frac{m+l+1}{2}$ and $\beta=\frac{l-1}{2},$ where $\mathcal{L}_{\alpha, \beta}$ is the Jacobi operator studied by Koornwinder \cite{Koorn} in detail. It is worth noting that we are in the ideal situation of the Jacobi analysis with $\alpha>\beta>\frac{-1}{2}.$ In fact, the Jacobi functions $\phi_\lambda^{\alpha, \beta}$ and elementary spherical functions $\phi_\lambda$ are related as  (\cite{Anker96}): $\phi_\lambda(t)=\phi_{2 \lambda}^{\alpha, \beta}(\frac{t}{2}).$ 
	As consequence of this relation, the following estimates for the elementary spherical functions hold true (see \cite{Platonov}).
	\begin{lem} \label{estijac}  The following inequalities are valid for spherical functions $\phi_\lambda(t)\,\,(t, \lambda \in \mathbb{R}_+):$
	\begin{itemize}
	    \item $|\phi_\lambda(t)| \leq 1.$
	    \item $|1-\phi_\lambda(t)| \leq \frac{t^2}{2} (4 \lambda^2+\frac{Q^2}{4}).$
	    \item There exists a constant $c>0,$ depending only on $\lambda,$ such that $|1-\phi_\lambda(t)| \geq c$ for  $\lambda t \geq 1.$ 
	\end{itemize}
	\end{lem}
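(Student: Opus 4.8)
The plan is to reduce all three estimates to known properties of the Jacobi function $\phi^{(\alpha,\beta)}_\mu$ through the identity $\phi_\lambda(t)=\phi^{(\alpha,\beta)}_{2\lambda}(t/2)$ recorded above, working in the range $\alpha\ge\beta>-\tfrac12$ in which the Jacobi convolution structure is positivity preserving. The single structural input I would isolate first is a Mehler--Dirichlet-type integral representation: for $\alpha\ge\beta>-\tfrac12$ there is a nonnegative even kernel $u\mapsto A_s(u)$, supported in $[-s,s]$ and of total mass $\int_{-s}^{s}A_s(u)\,du=1$, such that $\phi^{(\alpha,\beta)}_\mu(s)=\int_{-s}^{s}\cos(\mu u)\,A_s(u)\,du$ for real $\mu$. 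The nonnegativity of $A_s$ is exactly Koornwinder's positivity theorem and is where the hypothesis $\alpha\ge\beta>-\tfrac12$ enters.

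Granting this, the first two bounds are immediate. For $|\phi_\lambda(t)|\le1$ I would estimate $|\phi^{(\alpha,\beta)}_{2\lambda}(s)|\le\int_{-s}^{s}A_s(u)\,du=1$ using $|\cos|\le1$ and $A_s\ge0$, and translate back via $s=t/2$. Since this shows $\phi_\lambda(t)\le1$, the quantity $1-\phi_\lambda(t)$ is nonnegative and the modulus in the second bound is superfluous; writing $1-\phi_\lambda(t)=\int_{-t/2}^{t/2}\bigl(1-\cos(2\lambda u)\bigr)A_{t/2}(u)\,du$ and inserting the elementary inequality $0\le1-\cos\theta\le\theta^2/2$ together with $|u|\le t/2$ yields $0\le1-\phi_\lambda(t)\le\tfrac12(2\lambda)^2(t/2)^2=\tfrac12\lambda^2t^2$, which is majorised by the stated $\tfrac{t^2}{2}\bigl(4\lambda^2+\tfrac{Q^2}{4}\bigr)$. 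If one prefers to reproduce the stated coefficient verbatim, I would instead expand $1-\phi_\lambda(t)=-\int_0^t(t-s)\phi_\lambda''(s)\,ds$ (using $\phi_\lambda(0)=1$ and $\phi_\lambda'(0)=0$, the latter by evenness) and feed in the eigenvalue equation $\mathrm{rad}\,\Delta_S\,\phi_\lambda=-(\lambda^2+\tfrac{Q^2}{4})\phi_\lambda$ to exhibit the spectral parameter explicitly.

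The third estimate is the genuinely delicate one, and I would prove it in three steps for a fixed real $\lambda>0$. First, strict positivity: from $\phi_\lambda(t)=\int_{-t/2}^{t/2}\cos(2\lambda u)A_{t/2}(u)\,du$ with $A_{t/2}$ a probability density on $(-t/2,t/2)$ that charges no null set, together with $\cos(2\lambda u)<1$ off the null set $\{u:\,2\lambda u\in2\pi\mathbb{Z}\}$, one gets $\phi_\lambda(t)<1$, hence $g(t):=1-\phi_\lambda(t)>0$ for every $t>0$, and $g$ is continuous. Second, decay: the Harish--Chandra asymptotics for Jacobi functions give $|\phi^{(\alpha,\beta)}_{2\lambda}(s)|\le C_\lambda e^{-\rho s}\to0$ as $s\to\infty$ (here $\rho=\alpha+\beta+1>0$), so $g(t)\to1$. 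Third, a compactness argument: choose $T=T(\lambda)$ with $|\phi_\lambda(t)|\le\tfrac12$ for $t\ge T$, so that $g\ge\tfrac12$ there, while on the compact interval $[1/\lambda,T]$ the continuous strictly positive function $g$ attains a positive minimum $c_1(\lambda)$; then $c:=\min\{c_1(\lambda),\tfrac12\}>0$ works for all $t$ with $\lambda t\ge1$, and it depends only on $\lambda$ as required.

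The main obstacle is this last estimate, and within it the two nonelementary ingredients are (a) the strict inequality $\phi_\lambda(t)<1$ for $t>0$, which rules out an interior return of $\phi_\lambda$ to its maximal value (the analogue fails in the flat case where $\cos(\lambda t)$ recurs to $1$, and it is precisely the negative-curvature decay $e^{-\rho t}$ that saves us), and (b) the decay $\phi_\lambda(t)\to0$, for which I would invoke the standard asymptotic expansion of Jacobi functions in terms of the $c$-function. Both (a) and the positivity underlying the first and third estimates rest on the same positive-kernel representation, so the crux of the whole lemma is securing that representation in the admissible parameter range $\alpha\ge\beta>-\tfrac12$.
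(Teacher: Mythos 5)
Your reduction to Jacobi functions via $\phi_\lambda(t)=\phi^{(\alpha,\beta)}_{2\lambda}(t/2)$ and Koornwinder's positive kernel is the right framework (the paper itself offers no proof, citing Platonov, whose argument runs along similar lines), but your proof of the second estimate contains a genuine error: the normalization $\int_{-t/2}^{t/2}A_{t/2}(u)\,du=1$ is false. Both sides of the representation $\phi^{(\alpha,\beta)}_\mu(s)=\int_{-s}^{s}\cos(\mu u)A_s(u)\,du$ are entire in $\mu$, so one may evaluate at $\mu=i\rho$, where $\phi^{(\alpha,\beta)}_{i\rho}\equiv 1$; this gives $\int_{-s}^{s}\cosh(\rho u)A_s(u)\,du=1$, hence $\int_{-s}^{s}A_s(u)\,du=\phi^{(\alpha,\beta)}_0(s)<1$ strictly for $s>0$, since $\rho=\alpha+\beta+1>0$ and $A_s$ is a genuine density. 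Consequently your identity $1-\phi_\lambda(t)=\int(1-\cos(2\lambda u))A_{t/2}(u)\,du$ fails, and so does the inequality $1-\phi_\lambda(t)\le\tfrac12\lambda^2t^2$ it produces: letting $\lambda\to0^+$ makes the right-hand side vanish while the left-hand side tends to $1-\phi_0(t)>0$. The term $\tfrac{Q^2}{4}$ in the stated bound is exactly what absorbs $1-\phi_0(t)$ and cannot be discarded. Your fallback via $1-\phi_\lambda(t)=-\int_0^t(t-s)\phi_\lambda''(s)\,ds$ is also incomplete as sketched: the radial Laplacian contains the drift $b(s)\frac{d}{ds}$ with $b(s)\sim c/s$ near $0$, so substituting $\phi_\lambda''=-(\lambda^2+\tfrac{Q^2}{4})\phi_\lambda-b\,\phi_\lambda'$ leaves the singular term $\int_0^t(t-s)b(s)\phi_\lambda'(s)\,ds$ uncontrolled. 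The standard repair is to use the divergence form $(J\phi_\lambda')'=-(\lambda^2+\tfrac{Q^2}{4})J\phi_\lambda$, where $J$ is the (increasing) radial volume density; one integration together with $|\phi_\lambda|\le1$ gives $|\phi_\lambda'(t)|\le(\lambda^2+\tfrac{Q^2}{4})\,J(t)^{-1}\int_0^tJ\le(\lambda^2+\tfrac{Q^2}{4})\,t$, and a second integration yields $|1-\phi_\lambda(t)|\le\tfrac{t^2}{2}(\lambda^2+\tfrac{Q^2}{4})$, which implies the stated bound.

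The rest of your argument survives the corrected normalization. For the first estimate, mass $\le1$ suffices: $|\phi_\lambda(t)|\le\int A_{t/2}=\phi_0(t)\le1$. For the third, strictness $\phi_\lambda(t)<1$ for $t>0$ now follows even more directly from $|\phi_\lambda(t)|\le\phi_0(t)<1$, and your continuity-plus-decay compactness argument then correctly produces a constant $c(\lambda)>0$, which matches the literal wording of the lemma ("depending only on $\lambda$"). Be aware, however, that when the paper applies part (iii) (e.g.\ in the proof of Theorem \ref{Titch1}) the parameter $\lambda$ ranges over $[\tfrac1t,\tfrac2t]$ as $t\to0$, so what is actually used downstream is a constant uniform over the region $\lambda t\ge1$; Platonov obtains this from uniform asymptotics of $\phi_\lambda(t)$ in $\lambda t$, whereas your fixed-$\lambda$ compactness argument, by construction, cannot deliver that uniformity.
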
 
	Now, we define the spherical (or Harish Chandra) transform of an integrable radial function $f$ on $S$ by 
	$$\mathcal{H}(f)(\lambda):= \int_S f(x) \phi_\lambda(x)\, dx.$$
	For $f \in C_c^\infty(S)^\#,$ the following inversion formula holds:
	$$\mathcal{H}^{-1}(f)(x):= c_S \int_{0}^\infty \mathcal{H}f(\lambda) \phi_\lambda(x)\,|c(\lambda)|^{-2}\,d\lambda,$$ where $c_S$ depends only on $m$ and $l,$ and $c(\lambda)$ is the Harish-Chandra function. 
	Moreover, the following Plancherel formula holds:
	   $$\int_S |f(x)|^2 dx = c_S\int_{\mathbb{R}_+} |\mathcal{H}f(\lambda)|^2\, |c(\lambda)|^{-2} d\lambda.$$
	   The spherical Fourier transform extends to an isometry from $L^2(S)^\#$ to $L^2(\mathbb{R}_+, c_S |c(\lambda)|^{-2}d\lambda).$ For details on the spherical analysis on harmonic $NA$ group we refer to \cite{Anker96}. 
	   
	   Let $\sigma_t$ be the normalized surface measure of the geodesic sphere of radius $t$. Then $\sigma_t$ is a nonnegative radial measure. The spherical mean operator $M_t$ on a suitable function space on $S$ is defined by $M_tf: =f*\sigma_t.$  It can be noted that $M_tf(x)=\mathcal{R}(f^x)(t)$, where $f^x$ denotes  the right translation of function $f$ by $x$ and $\mathcal{R}$ is the radialization operator defined, for suitable function $f,$ by
	   $$\mathcal{R}f(x)=\int_{S_\nu} f(y)\,d\sigma_{\nu}(y),$$ where $\nu=r(x)= \mu(C(x), 0),$ here $C$ is the Cayley transform, and $d\sigma_\nu$ is the normalized surface measure induced by the left invariant Riemannian metric on the geodesic sphere $S_\nu=\{y \in S: \mu(y, e)=\nu \}.$ It is easy to see that  $\mathcal{R}f$ is a radial function and for any radial function $f,$ $\mathcal{R}f=f.$ Consequently, for a radial function $f,$\, $M_tf$ is the usual translation of $f$ by $t.$  In \cite{KRS}, the authors proved that $M_t$ is a bounded linear operator on $L^p(S)$ and for a suitable function $f$ on $S,$ $\widetilde{M_t f}(\lambda, n)= \widetilde{f}(\lambda, n) \phi_\lambda(t)$ whenever both make sense. Also, $M_tf$ converges to $f$ as $t \rightarrow 0.$ The following result was proved by Kumar et al. in \cite[Theorem 4.7 (a)]{KRS} which was initially conjectured by Bray and Pinsky \cite[Conjecture 16]{BrayPinsky}. 
	\begin{thm} \label{KRSconje} Let $1 <p \leq 2$ and $p \leq q \leq p'.$ Then for $f \in L^p(S)$
	$$\left( \int_{\mathbb{R}} \min \{1, (\lambda t)^{2p'}\} \left( \int_N |\widetilde{f}(\lambda+i \gamma_q \rho, n)|^q \,dn \right)^{p'/q} |c(\lambda)|^{-2} d\lambda \right)^{\frac{1}{p'}} \leq C_{p,q} \|M_tf-f\|_{p}.$$
 	\end{thm}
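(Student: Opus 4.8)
The plan is to reduce the estimate to the Hausdorff-Young inequality applied to the difference $M_tf-f$, combined with sharp lower bounds for the elementary spherical functions. Recall from the discussion preceding Theorem~\ref{KRSconje} that $\widetilde{M_tf}(\lambda,n)=\widetilde f(\lambda,n)\,\phi_\lambda(t)$; continuing this identity analytically in the spectral variable to the shifted contour gives
\begin{equation*}
\widetilde{(M_tf-f)}(\lambda+i\gamma_q\rho,n)=\widetilde f(\lambda+i\gamma_q\rho,n)\bigl(\phi_{\lambda+i\gamma_q\rho}(t)-1\bigr).
\end{equation*}
Thus the theorem will follow once I establish two ingredients: (i) a mixed-norm Hausdorff-Young inequality bounding $\bigl(\int_{\mathbb{R}}(\int_N|\widetilde g(\lambda+i\gamma_q\rho,n)|^q\,dn)^{p'/q}|c(\lambda)|^{-2}\,d\lambda\bigr)^{1/p'}$ by $\|g\|_p$, which I will apply to $g=M_tf-f\in L^p(S)$ (this lies in $L^p$ since $M_t$ is bounded on $L^p(S)$); and (ii) the pointwise lower bound $\min\{1,(\lambda t)^2\}\lesssim|\phi_{\lambda+i\gamma_q\rho}(t)-1|$, valid for all $\lambda\in\mathbb{R}$, which lets me dominate the weight $\min\{1,(\lambda t)^{2p'}\}=(\min\{1,(\lambda t)^2\})^{p'}$ from the statement by $|\phi_{\lambda+i\gamma_q\rho}(t)-1|^{p'}$ inside the integral.

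For ingredient (i) I would use Stein's analytic interpolation in the contour-shift parameter. The two elementary endpoints sit at $p=1$: writing $\mathcal P_\mu(x,n)=\mathcal P(x,n)^{\frac12-\frac{i\mu}{Q}}$, the choices $\mu=\lambda-i\rho$ and $\mu=\lambda+i\rho$ give $|\mathcal P_{\lambda-i\rho}(x,n)|=1$ and $|\mathcal P_{\lambda+i\rho}(x,n)|=\mathcal P(x,n)$, whence $\sup_{\lambda,n}|\widetilde g(\lambda-i\rho,n)|\le\|g\|_1$ and $\sup_\lambda\int_N|\widetilde g(\lambda+i\rho,n)|\,dn\le\|g\|_1$, the latter using $\int_N\mathcal P(x,n)\,dn=1$. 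Interpolating each of these against the Plancherel isometry $L^2(S)\cong L^2(\mathbb{R}\times N,|c(\lambda)|^{-2}\,d\lambda\,dn)$ produces two corner bounds $L^p(S)\to L^{p'}_\lambda L^{p'}_n$ (which is exactly the Hausdorff-Young inequality of \cite{RS} recorded in the excerpt) and $L^p(S)\to L^{p'}_\lambda L^{p}_n$, with the respective shifts $\gamma_{p'}$ and $\gamma_p$. Since both corners share the domain $L^p$ and the outer exponent $p'$, a final interpolation in the inner $N$-variable yields the mixed-norm inequality for every $q$ with $p\le q\le p'$, the shift $i\gamma_q\rho$ arising because $\gamma_q=(1-\theta)\gamma_{p'}+\theta\gamma_p$ interpolates linearly.

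For ingredient (ii) I would split into the regimes $|\lambda| t\ge1$ and $|\lambda| t\le1$. When $|\lambda| t\ge1$ the weight equals $1$, so I need $|\phi_{\lambda+i\gamma_q\rho}(t)-1|\gtrsim1$; this is the shifted analogue of the lower bound in Lemma~\ref{estijac}, which I would obtain from the decay $\phi_{\lambda+i\gamma_q\rho}(t)\to0$ as $t\to\infty$ together with a uniform continuity argument. When $|\lambda| t\le1$ the weight is $(\lambda t)^2$, and here I would use the second-order behaviour of the spherical function at the origin: since $\phi_\mu$ is the radial eigenfunction of $\Delta_S$ with eigenvalue $-(\mu^2+\rho^2)$ and $\phi_\mu(0)=1$, a Taylor expansion gives $1-\phi_\mu(t)\sim c_d(\mu^2+\rho^2)t^2$ with $c_d>0$ as $t\to0$; with $\mu=\lambda+i\gamma_q\rho$ one computes $\operatorname{Re}(\mu^2+\rho^2)=\lambda^2+\rho^2(1-\gamma_q^2)\ge\lambda^2$ because $|\gamma_q|<1$, so $|\mu^2+\rho^2|\ge\lambda^2$ and hence $|\phi_{\lambda+i\gamma_q\rho}(t)-1|\gtrsim(\lambda t)^2$. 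Combining the two regimes gives (ii).

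Putting the pieces together, I substitute the identity for $\widetilde{(M_tf-f)}$, apply (ii) to bound $\min\{1,(\lambda t)^{2p'}\}$ by $|\phi_{\lambda+i\gamma_q\rho}(t)-1|^{p'}$ in the integrand, and invoke the mixed-norm inequality (i) for $g=M_tf-f$ to reach the asserted estimate. The main obstacle is ingredient (ii): Lemma~\ref{estijac} supplies only the real-parameter estimates, so the genuinely new work is the \emph{uniform} lower bound $|\phi_{\lambda+i\gamma_q\rho}(t)-1|\gtrsim\min\{1,(\lambda t)^2\}$ for the complex-shifted spherical function, in particular controlling the regime $|\lambda| t\le1$ where large $\lambda$ and small $t$ occur together; justifying the mixed-norm Hausdorff-Young inequality of (i) by Stein interpolation is the secondary technical point.
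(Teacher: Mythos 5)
The first thing to note is that this paper contains no proof of Theorem \ref{KRSconje}: it is imported verbatim from \cite[Theorem 4.7 (a)]{KRS}, so there is no internal argument to measure your attempt against. Measured instead against the proof in that source, your outline follows essentially the same route: a mixed-norm Hausdorff--Young (restriction) inequality applied to $g=M_tf-f$, the analytically continued multiplier identity $\widetilde{M_tf}(z,n)=\phi_z(t)\,\widetilde f(z,n)$ on the line $\operatorname{Im}z=\gamma_q\rho$, and a uniform lower bound $|1-\phi_{\lambda+i\gamma_q\rho}(t)|\gtrsim\min\{1,(\lambda t)^2\}$. Your interpolation scheme for ingredient (i) --- the two $p=1$ endpoints at shifts $\mp i\rho$, the Plancherel isometry, then Stein interpolation in the linearly varying shift --- is the correct mechanism and produces the right corners.

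However, as a self-contained proof the proposal has genuine gaps, and they sit exactly where you say the work is. For ingredient (i), what you sketch is not a routine reduction: the mixed-norm inequality with the $q$-dependent contour shift is the restriction theorem that is the principal content of \cite{KRS} (the case $q=p'$ being the Hausdorff--Young inequality of \cite{RS}); it can be cited, but it cannot be treated as a footnote without actually running the analytic-family argument, including the boundary cases $q\in\{p,p'\}$, where the line $\operatorname{Im}z=\gamma_q\rho$ is the edge of the strip of holomorphy of $\widetilde f(\cdot,n)$ and a limiting argument is needed. For ingredient (ii), your argument does not close. The Taylor/hypergeometric expansion $1-\phi_z(t)\approx\frac{z^2+\rho^2}{4(\alpha+1)}\,t^2$ is uniform only while $|z|t$ is small; in the critical regime where $|\lambda|$ is large and $t\asymp 1/|\lambda|$ (so $\lambda t\le 1$ but $|z|t\asymp 1$) all terms of the expansion are of comparable size, and cancellation must be excluded by uniform Bessel-type asymptotics of the spherical (Jacobi) functions, as in \cite{Platonov,Bray} and their complex-shifted extension in \cite{KRS}. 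Similarly, in the regime $\lambda t\ge 1$ you need a constant independent of both $\lambda$ and $t$; note that Lemma \ref{estijac}, as recorded in this paper, only asserts a constant ``depending only on $\lambda$'', so even the real-parameter bound cannot simply be quoted from here, let alone its $\gamma_q$-shifted analogue. In short, your roadmap coincides with the actual proof in the literature, but the two deferred lemmas \emph{are} the proof.
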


  For small $|\lambda|,$ with $q=p',$ we have 

\begin{equation} \label{Inte1}
     \int_{|\lambda|<\frac{1}{t}} |\lambda|^{2p'}  \int_N |\widetilde{f}(\lambda+i \gamma_{p'} \rho, n)|^{p'} \,dn |c(\lambda)|^{-2} d\lambda \leq C_{p}^{p'} \left( \frac{\|M_tf-f\|_{p}}{t^2} \right)^{p'}.
\end{equation}

		For notational convenience, we take integral over an empty set to be
zero.
	\section{Titchmarsh theorems for (Helgason) Fourier transform of H$\ddot{\text{o}}$lder-Lipschitz functions on harmonic $NA$ groups }
	This section is devoted to the investigation of Titchmarsh theorems on H$\ddot{\text{o}}$lder-Lipschitz functions on harmonic $NA$ groups. In this plan of action we first study the growth property of Fourier transform of $L^2$-Lipschitz function. Then we present some applications of it in the context of multiplier theorems and embedding theorems of Lipschitz-Sobolev spaces. After that, we prove an analogue of the second Titchmarsh theorem improving the range of Hausdorff-Young inequality for H$\ddot{\text{o}}$lder-Lipschitz functions on harmonic $NA$ groups.

	\subsection{Growth properties of Fourier transform of H$\ddot{\text{o}}$lder-Lipschitz functions on harmonic $NA$ groups} \label{Subs3.1}
	The following theorem generalizes one result found in \cite{Platonov} for rank one noncompact symmetric spaces. Recently, this result was extended by Bray \cite{Bray} in higher dimensional Euclidean spaces, which dates back to Titchmarsh \cite[Theorem 17]{Titch}  in the one dimensional setting (see Theorem B).

	\begin{thm} \label{Titch1} Let $0<\alpha \leq 1$ and $f \in L^2(S).$ Then $f \in \text{Lip}_S(\alpha; 2)$ if and only if 
	$$ \int_{\lambda >\frac{1}{t}} \int_N |\widetilde{f}(\lambda, n)|^2 dn |c(\lambda)|^{-2} \, d \lambda = O(t^{2 \alpha})\,\,\,\,\text{as}\,\, t \rightarrow 0.$$	\end{thm}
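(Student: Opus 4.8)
The plan is to prove both directions of the equivalence using the Plancherel theorem for the Helgason Fourier transform together with the spherical mean characterization $\widetilde{M_tf}(\lambda,n)=\widetilde{f}(\lambda,n)\phi_\lambda(t)$ and the spherical function estimates in Lemma~\ref{estijac}. By Plancherel, $\|M_tf-f\|_2^2$ transforms to an integral involving $|1-\phi_\lambda(t)|^2$ against $\int_N|\widetilde{f}(\lambda,n)|^2\,dn\,|c(\lambda)|^{-2}$, so the entire argument reduces to controlling $|1-\phi_\lambda(t)|^2$ from above and below and splitting the spectral integral at the threshold $\lambda\sim 1/t$.

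\emph{Necessity.} Suppose $f\in\textnormal{Lip}_S(\alpha;2)$. Using the Plancherel identity on $M_tf-f$, I would write
\begin{equation*}
\|M_tf-f\|_2^2 = c_S\int_{\mathbb{R}_+}|1-\phi_\lambda(t)|^2\left(\int_N|\widetilde{f}(\lambda,n)|^2\,dn\right)|c(\lambda)|^{-2}\,d\lambda.
\end{equation*}
On the region $\lambda>1/t$ we have $\lambda t\geq 1$, so by the third estimate in Lemma~\ref{estijac} there is $c>0$ with $|1-\phi_\lambda(t)|\geq c$. Hence the full integral dominates a constant times the tail integral $\int_{\lambda>1/t}\int_N|\widetilde{f}(\lambda,n)|^2\,dn\,|c(\lambda)|^{-2}\,d\lambda$, and the Lipschitz hypothesis $\|M_tf-f\|_2=O(t^\alpha)$ immediately yields the desired $O(t^{2\alpha})$ bound.

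\emph{Sufficiency.} Conversely, assume the tail bound $\int_{\lambda>1/t}\int_N|\widetilde{f}(\lambda,n)|^2\,dn\,|c(\lambda)|^{-2}\,d\lambda=O(t^{2\alpha})$; call this tail function $\theta(t)$. Starting from the Plancherel expression for $\|M_tf-f\|_2^2$, I would split at $\lambda=1/t$. For the high-frequency part $\lambda>1/t$, the bound $|1-\phi_\lambda(t)|\leq 2$ (from $|\phi_\lambda(t)|\leq1$) gives control by $4\,\theta(t)=O(t^{2\alpha})$. For the low-frequency part $\lambda\leq 1/t$, the second estimate $|1-\phi_\lambda(t)|\leq \tfrac{t^2}{2}(4\lambda^2+\tfrac{Q^2}{4})$ reduces matters to bounding $t^4\int_{\lambda\leq 1/t}\lambda^4\int_N|\widetilde{f}|^2\,dn\,|c(\lambda)|^{-2}\,d\lambda$ (the $Q^2/4$ term being even easier). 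The standard device here is to integrate by parts / use a dyadic or layer-cake decomposition expressing $\int_{\lambda\leq 1/t}\lambda^4\,g(\lambda)\,d\lambda$ in terms of the tail function $-\int\theta'$, converting the hypothesis on $\theta$ into the bound $t^4\cdot O(t^{-4+2\alpha})=O(t^{2\alpha})$ for this piece.

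\textbf{The main obstacle} is the low-frequency estimate: the weight $\lambda^4$ grows while the tail hypothesis only controls $\theta$, so one cannot bound the low-frequency integral naively. The right technique is to rewrite $\int_0^{1/t}\lambda^4 g(\lambda)\,d\lambda$ via a dyadic decomposition $\sum_j\int_{2^{-j-1}/t<\lambda\le 2^{-j}/t}$ or equivalently through Fubini/integration by parts against the primitive $\theta$, so that each annulus contributes $(\text{frequency})^4$ times a tail bound of the correct order; summing the resulting geometric series in $j$ recovers the $O(t^{2\alpha})$ rate (the convergence here requires $\alpha\le 1$, which is why the exponent restriction is natural). Care must be taken that the constants are uniform as $t\to0$ and that the $|c(\lambda)|^{-2}$ weight, whose small-$\lambda$ behaviour is $|\lambda|^2$ by the stated estimate, is correctly absorbed.
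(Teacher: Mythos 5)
Your proposal is correct and follows essentially the same route as the paper's proof: Plancherel together with $\widetilde{M_tf}(\lambda,n)=\phi_\lambda(t)\widetilde{f}(\lambda,n)$, the lower bound of Lemma \ref{estijac}(iii) for necessity, and a split at $\lambda=1/t$ with the tail function $F(r)=\int_r^\infty G\,|c(\lambda)|^{-2}d\lambda$ handled by integration by parts/dyadic summation for sufficiency. The only deviations are cosmetic: you bound the whole tail $\lambda>1/t$ at once where the paper bounds the band $[1/t,2/t]$ and sums dyadically, and in the low-frequency piece you square the bound $|1-\phi_\lambda(t)|\lesssim t^2\bigl(\lambda^2+\tfrac{Q^2}{4}\bigr)$ where the paper uses $|1-\phi_\lambda|^2\le 2|1-\phi_\lambda|$ — both work, and your variant in fact avoids the logarithmic degeneration that the paper's computation $\int_0^{1/t}r^{1-2\alpha}dr$ suffers at the endpoint $\alpha=1$.
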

	\begin{proof}
	
	Let us assume that $f \in \text{Lip}_S(\alpha; 2)$, i.e., 
	\begin{equation} \label{lic}
	    \|M_tf-f\|_2= O(t^\alpha)\,\,\,\text{as}\,\, t \rightarrow 0.
	\end{equation}
	By the Plancherel theorem and the fact that $\widetilde{M_t f}(\lambda, n)= \phi_\lambda(a_t) \widetilde{f}(\lambda, n)$ we conclude that
	\begin{align*}
	    \|M_tf-f\|_2^2 &= \|\widetilde{M_t f-f}\|^2_{L^2(\mathbb{R}_+ \times N, |c(\lambda)|^{-2} d\lambda \, dn)} \\&=  \int_{0}^\infty \int_N |1- \phi_\lambda(a_t) |^2 |\widetilde{f}(\lambda, n)|^2 |c(\lambda)|^{-2} d\lambda\, dn \\&=  \int_{0}^\infty  |1- \phi_\lambda(a_t) |^2 G(\lambda) |c(\lambda)|^{-2} d\lambda,
	\end{align*}
where $ G(\lambda)=	\int_N |\widetilde{f}(\lambda, n)|^2 dn.$
Now, by \eqref{lic} it follows that 
$$  \int_{0}^\infty  |1- \phi_\lambda(a_t) |^2 G(\lambda) |c(\lambda)|^{-2} d\lambda = O\left( t^{2\alpha}\right)\,\,\text{as}\, t \rightarrow 0.$$

Now, if $\lambda \in [\frac{1}{t}, \frac{2}{t}]$ then $ \lambda t \geq 1$ and by Lemma \ref{estijac}(iii) it follows that there exists a constant $c>0$ such that 
$ \frac{|1-\phi_\lambda(a_t)|^2}{c^2} \geq 1.$
Using this, we have 
\begin{align*}
    \int_{\frac{1}{t}}^{\frac{2}{t}} G(\lambda)\, |c(\lambda)|^{-2}\, d\lambda &\leq \frac{1}{c^2}  \int_{\frac{1}{t}}^{\frac{2}{t}} |1-\phi_\lambda(a_t)|^2 G(\lambda)\, |c(\lambda)|^{-2}\, d\lambda \\ &\leq \frac{1}{c^2}  \int_0^\infty |1-\phi_\lambda(a_t)|^2 G(\lambda)\, |c(\lambda)|^{-2}\, d\lambda = O\left( t^{2\alpha}\right)\,\,\text{as}\, t \rightarrow 0.
\end{align*}
By setting $r=\frac{1}{t},$ we may rewrite the above inequality as 
$$\int_{r}^{2r} G(\lambda)\, |c(\lambda)|^{-2}\, d\lambda \leq C' \left( r^{-2 \alpha} \right).$$ 
Consequently, we have 
\begin{align*}
    \int_{r}^{\infty} G(\lambda)\, |c(\lambda)|^{-2}\, d\lambda & = \left[ \int_{r}^{2r}+\int_{2r}^{4r}+\int_{4r}^{8r}+\cdots \right]G(\lambda)\, |c(\lambda)|^{-2}\, d\lambda  \\& \leq C' \left(r^{-2 \alpha}+(2r)^{-2 \alpha} +(4r)^{-2 \alpha}+\cdots \right) \\&\leq C' r^{-2 \alpha} \left[1+2^{-2\alpha}+ (2^{-2\alpha})^2+(2^{-2\alpha})^3+\cdots \right] \\&= C' (1-2^{-2\alpha})^{-1} r^{-2 \alpha}.
\end{align*}
Therefore, we have \begin{equation*}
        \int_{r}^\infty \int_N |\widetilde{f}(\lambda, n)|^2 dn\,|c(\lambda)|^{-2} \, d\lambda = O\left( r^{-2\alpha}\right)\,\,\,\text{as}\,\, r \rightarrow \infty.
    \end{equation*}
    Now we will prove the converse implication. Assume that 
    \begin{equation}
        \int_{r}^\infty \int_N |\widetilde{f}(\lambda, n)|^2 dn \, d\lambda = O\left(r^{-2\alpha}\right)\,\,\,\text{as}\,\, r \rightarrow \infty.
    \end{equation}
As earlier we set $G(\lambda)= \int_N |\widetilde{f}(\lambda, n)|^2\, dn.$ Then  we get
\begin{align*}
    \int_{r}^{2r} G(\lambda)\, |c(\lambda)|^{-2}d\lambda 
     \leq C r^{-2\alpha}.
\end{align*}
As a consequence, it follows that 
\begin{align*}
    \int_r^\infty G(\lambda) |c(\lambda)|^{-2}\, d\lambda &= \sum_{j=0}^\infty \int_{2^j r}^{2^{j+1}r} G(\lambda)\, |c(\lambda)|^{-2}\, d\lambda  \leq C \sum_{j=0}^{\infty}(2^{-2\alpha})^k r^{-2 \alpha} \leq C r^{-2 \alpha},
\end{align*} where $C$ is a positive constant.
Further, we have 
\begin{align*}
	    \|M_tf-f\|_{L^2(S)}^2&= \|\widetilde{M_t f-f}\|^2_{L^2(\mathbb{R}_+ \times N, |c(\lambda)|^{-2} d\lambda \, dn)} \\&=  \int_{0}^\infty \int_N |1- \phi_\lambda(a_t) |^2 |\widetilde{f}(\lambda, n)|^2 |c(\lambda)|^{-2} d\lambda\, dn \\&=  \int_{0}^\infty  |1- \phi_\lambda(a_t) |^2 G(\lambda) |c(\lambda)|^{-2} d\lambda \\&= \int_{0}^{\frac{1}{t}} |1- \phi_\lambda(a_t) |^2 G(\lambda) |c(\lambda)|^{-2} d\lambda+ \int_{\frac{1}{t}}^{\infty} |1- \phi_\lambda(a_t) |^2 G(\lambda) |c(\lambda)|^{-2} d\lambda \\&= 
	    I_1+I_2.
	\end{align*}
	For second integral $I_2,$ by using $|\phi_\lambda(a_t)| \leq 1$ for $\lambda \in \mathbb{R}_+$ we have, with $r=\frac{1}{t},$ 
	\begin{align*}
	    I_2 &=\int_{\frac{1}{t}}^\infty |1- \phi_\lambda(a_t) |^2 G(\lambda) |c(\lambda)|^{-2} d\lambda \leq  4 \int_{\frac{1}{t}}^\infty G(\lambda)\, |c(\lambda)|^{-2} d\lambda= 4 C r^{-2\alpha}
	\end{align*}
	and so 
	\begin{equation}\label{I_2vi}
	    I_2= O \left(r^{-2\alpha}\right).
	\end{equation}
	Next, to estimate $I_1,$ we will use the facts that $|\phi_\lambda(a_t)| \leq 1$ and $|1-\phi_\lambda(a_t)| \leq t^2 (\lambda^2+\frac{Q^2}{4})$ for $\lambda,\, t \in \mathbb{R}_+$ (see Lemma \ref{estijac}), so that 
	\begin{align*}
	    I_1 &= \int_{0}^{\frac{1}{t}} |1- \phi_\lambda(a_t) |^2 G(\lambda) |c(\lambda)|^{-2} d\lambda = \int_{0}^{\frac{1}{t}} |1- \phi_\lambda(a_t) |\,|1- \phi_\lambda(a_t) | G(\lambda) |c(\lambda)|^{-2} d\lambda \\ & \leq \int_{0}^{\frac{1}{t}} (1+| \phi_\lambda(a_t)| )\,|1- \phi_\lambda(a_t) | G(\lambda) |c(\lambda)|^{-2} d\lambda \\& \leq 2 \int_{0}^{\frac{1}{t}} |1- \phi_\lambda(a_t) | G(\lambda) |c(\lambda)|^{-2} d\lambda \leq  2 t^2 \int_{0}^{\frac{1}{t}} \left(\lambda^2+\frac{Q^2}{4}\right)\, G(\lambda)\, |c(\lambda)|^{-2}\, d\lambda.
	\end{align*} For a while, we put 
	$F(r)=\int_r^\infty G(\lambda) d\mu(\lambda),$ where $d\mu(\lambda)= |c(\lambda)|^{-2} d\lambda.$ Then we have, as $0 \leq \lambda \leq \frac{1}{t}=r,$ that
    \begin{align}
        I_1 & \leq  2 t^2 \int_{0}^{\frac{1}{t}} -\left(r^{2}+\frac{Q^2}{4}\right) F'(r)\, dr \leq 2 t^2 \int_{0}^{\frac{1}{t}} -r^{2} F'(r)\, dr  \\&= 2t^2 \left( \frac{-1}{t^2} F\left(\frac{1}{t}\right)+2 \int_0^{\frac{1}{t}} r F(r)\, dr \right)= -2F\left(\frac{1}{t}\right)+4t^2 \int_0^{\frac{1}{t}} r F(r)\, dr \\&\leq 4t^2 \int_0^{\frac{1}{t}} r F(r)\, dr. \end{align}
    Since $F(r)= \int_r^\infty G(\lambda) d\mu(\lambda)= O\left( r^{-2 \alpha}\right)$ we get 
    $$I_1 \leq C t^2 \int_0^{\frac{1}{t}} r\left(r^{-2 \alpha}\right)\, dr \leq  C t^{2\alpha}, $$ where $C$ is a positive constant. Hence, by combining $I_1$ and $I_2$ we get 
    $$\|M_tf-f\|_{L^2(S)}^2= \left( t^{2\alpha}\right)\,\,\,\text{as}\, t \rightarrow 0.$$
    Hence, $f \in \text{Lip}_S(\alpha,2).$
	\end{proof}

\subsection{Applications}
This subsection is devoted to the applications of the result presented in Subsection \ref{Subs3.1}. Specifically, we formulate the following corollary for the regularity of Fourier multipliers of H\"older-Lipschitz  spaces on harmonic $NA$ groups. This corollary complements the other results available in the literature (e.g. see \cite{Anker96}) for boundedness of Fourier multipliers on harmonic $NA$ groups. 

	\begin{cor} \label{coro3.6}
	 Let $0 \leq \gamma <1$ and let $h$ be an even measurable function on $\mathbb{R}$ such that 
	 $$|h(\lambda)|\leq C \langle \lambda \rangle^{-\gamma},$$ where $\langle \lambda \rangle = (\lambda^2+\frac{Q^2}{2})^{\frac{1}{2}}.$
	 Let $A$ be the Fourier multiplier with symbol $h$, i.e., given by $\widetilde{Af}(\lambda, n)= h(\lambda) \widetilde{f}(\lambda, n)$ for all $\lambda \in \mathbb{R}_+$ and $n \in N.$ Then 
	 $$A:\text{Lip}_S(\alpha; 2) \rightarrow \text{Lip}_S(\alpha+\gamma;2)$$ is bounded for all $\alpha$ such that $0<\alpha<1-\gamma.$
	\end{cor}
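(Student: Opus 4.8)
The plan is to use the characterization of $\text{Lip}_S(\alpha; 2)$ provided by Theorem \ref{Titch1}, which recasts membership in the Hölder-Lipschitz space entirely in terms of the tail behaviour of the Helgason Fourier transform. First I would observe that since $A$ acts as a Fourier multiplier with symbol $h$, we have $\widetilde{Af}(\lambda, n) = h(\lambda)\widetilde{f}(\lambda, n)$, so the quantity controlling membership of $Af$ in $\text{Lip}_S(\alpha+\gamma; 2)$ is
\begin{equation*}
\int_{\lambda > \frac{1}{t}} \int_N |\widetilde{Af}(\lambda, n)|^2\, dn\, |c(\lambda)|^{-2}\, d\lambda = \int_{\lambda > \frac{1}{t}} |h(\lambda)|^2 \int_N |\widetilde{f}(\lambda, n)|^2\, dn\, |c(\lambda)|^{-2}\, d\lambda.
\end{equation*}
Writing $G(\lambda) = \int_N |\widetilde{f}(\lambda, n)|^2\, dn$ as in the proof of Theorem \ref{Titch1}, the goal reduces to showing that $\int_{\lambda > 1/t} |h(\lambda)|^2 G(\lambda)\, |c(\lambda)|^{-2}\, d\lambda = O(t^{2(\alpha+\gamma)})$ as $t \to 0$, given that $\int_{\lambda > 1/t} G(\lambda)\, |c(\lambda)|^{-2}\, d\lambda = O(t^{2\alpha})$.

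The key step is to exploit the symbol bound $|h(\lambda)| \leq C\langle\lambda\rangle^{-\gamma}$ together with the tail decay of the untwisted transform. For $\lambda > 1/t$ we have $\langle\lambda\rangle^{-2\gamma} \lesssim \lambda^{-2\gamma}$, so the integrand is controlled by $\lambda^{-2\gamma} G(\lambda)\, |c(\lambda)|^{-2}$. I would set $T(r) := \int_{r}^\infty G(\lambda)\, |c(\lambda)|^{-2}\, d\lambda$, so that the hypothesis $f \in \text{Lip}_S(\alpha; 2)$ gives $T(r) = O(r^{-2\alpha})$ as $r \to \infty$ via Theorem \ref{Titch1}. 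The plan is then to run a dyadic decomposition of $[1/t, \infty)$ into the annuli $[2^j/t, 2^{j+1}/t)$ for $j \geq 0$: on each such annulus $\lambda^{-2\gamma} \leq (2^j/t)^{-2\gamma} = t^{2\gamma} 2^{-2j\gamma}$, while the mass of $G\,|c|^{-2}$ on the annulus is bounded by $T(2^j/t) = O((2^j/t)^{-2\alpha}) = O(t^{2\alpha} 2^{-2j\alpha})$. Multiplying and summing the geometric series $\sum_{j \geq 0} 2^{-2j(\alpha+\gamma)}$, which converges because $\alpha + \gamma > 0$, yields the bound $O(t^{2\alpha} \cdot t^{2\gamma}) = O(t^{2(\alpha+\gamma)})$, precisely the tail condition that places $Af$ in $\text{Lip}_S(\alpha+\gamma; 2)$.

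Finally I would invoke Theorem \ref{Titch1} again, this time in the direction that the tail estimate implies membership in the Hölder-Lipschitz space, to conclude $Af \in \text{Lip}_S(\alpha+\gamma; 2)$ with the norm controlled by that of $f$; the boundedness of $A$ then follows since all constants depend only on $C$, $\alpha$, and $\gamma$. The restriction $0 < \alpha < 1 - \gamma$ is exactly what guarantees both $\alpha + \gamma < 1$ (so that $\text{Lip}_S(\alpha+\gamma; 2)$ is a genuine Hölder-Lipschitz space in the admissible range of Theorem \ref{Titch1}) and $\alpha + \gamma > 0$ (so that the geometric sum converges). I expect the main technical point to be the careful handling of the dyadic summation constant and verifying that the $\langle\lambda\rangle^{-\gamma}$ bound transfers cleanly to a $\lambda^{-\gamma}$ bound on the tail region $\lambda > 1/t$ as $t \to 0$; this is where one must be slightly attentive, but it is routine since $\langle\lambda\rangle \geq |\lambda|$ and the large-$\lambda$ behaviour dominates.
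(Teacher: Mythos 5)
Your proposal is correct, and it follows the paper's overall strategy exactly: apply Theorem \ref{Titch1} to $f$ to obtain the tail bound, transfer that bound to $Af$ via the symbol estimate, and then invoke Theorem \ref{Titch1} in the converse direction to conclude $Af \in \text{Lip}_S(\alpha+\gamma;2)$. The one place you diverge is the middle estimate. The paper does it in one line: on the tail region $\lambda > \frac{1}{t}$ one has $\langle\lambda\rangle^{-2\gamma} \leq \lambda^{-2\gamma} \leq t^{2\gamma}$ pointwise, so the factor $t^{2\gamma}$ is pulled out of the integral and the hypothesis $\int_{\lambda>1/t} G(\lambda)\,|c(\lambda)|^{-2}\,d\lambda = O(t^{2\alpha})$ finishes the job. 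Your dyadic decomposition into annuli $[2^j/t, 2^{j+1}/t)$ with the geometric series $\sum_{j\geq 0} 2^{-2j(\alpha+\gamma)}$ arrives at the same bound and is valid, but it is machinery you do not need: since the weight $\lambda^{-2\gamma}$ is decreasing, its supremum over the entire tail is already $t^{2\gamma}$, which is precisely what your annulus-by-annulus bookkeeping reproduces after summation. (A dyadic argument of this kind would earn its keep only when a global supremum is unavailable, e.g., for an increasing weight played against a tail bound.) One small point in favour of your write-up: you state explicitly that $\alpha+\gamma<1$ is required so that Theorem \ref{Titch1} applies to $Af$, which the paper leaves implicit.
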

	\begin{proof}
	Let $f \in \text{Lip}_S(\alpha; 2).$ Then by Theorem \ref{Titch1} we have 
	\begin{align*}
	    \int_{\lambda >\frac{1}{t}} \int_N |\widetilde{Af}(\lambda, n)|^2 dn |c(\lambda)|^{-2} \, d \lambda &= \int_{\lambda >\frac{1}{t}} \int_N |h(\lambda)\widetilde{f}(\lambda, n)|^2 dn |c(\lambda)|^{-2} \, d \lambda \\& \leq C \int_{\lambda >\frac{1}{t}} \int_N \langle \lambda \rangle^{-2\gamma}|\widetilde{f}(\lambda, n)|^2 dn |c(\lambda)|^{-2} \, d \lambda \\& \leq  C t^{2\gamma} \int_{\lambda >\frac{1}{t}} \int_N |\widetilde{f}(\lambda, n)|^2 dn |c(\lambda)|^{-2} \, d \lambda \\& = O(t^{2(\alpha+\gamma)})\,\,\,\text{as}\,\,t \rightarrow 0.
	\end{align*}
	
	Again by Theorem \ref{Titch1} this implies that $A:\text{Lip}_S(\alpha; 2) \rightarrow \text{Lip}_S(\alpha+\gamma;2)$ is bounded for all $\alpha>0$ such that $\alpha+\gamma<1.$
	\end{proof}
	
	As an example we consider the Lipschitz regularity for the Laplace-Beltrami operator $(-\Delta_S)^{\frac{-\gamma}{2}}.$ First we observe that if $A:=(-\Delta_S)^{\frac{-\gamma}{2}}$ with $0 \leq \gamma <1,$ by Corollary \ref{coro3.6} we get 
	$$\|(-\Delta_S)^{\frac{-\gamma}{2}} f\|_{\textnormal{Lip}_S(\alpha+\gamma; 2)} \leq C \|f\|_{\textnormal{Lip}_S(\alpha; 2)}$$ for all $\alpha$ such that $0<\alpha<1-\gamma.$
	Hence, \begin{equation} \label{sobolev}
	    \| f\|_{\textnormal{Lip}_S(\alpha+\gamma; 2)} \leq C \|(-\Delta_S)^{\frac{\gamma}{2}}f\|_{\textnormal{Lip}_S(\alpha; 2)}.
	\end{equation}
	Now, we can introduce the Sobolev-Lipschitz space $H^\gamma \textnormal{Lip}_S(\alpha; 2)$ for every $0 \leq  \gamma <1$ and $0< \alpha<1-\gamma$ by 
	$$H^\gamma \textnormal{Lip}_S(\alpha; 2):= \left\{ f\in \mathcal{D}'(S):  (-\Delta_S)^{\frac{\gamma}{2}} f \in \textnormal{Lip}_S(\alpha; 2) \right\}.$$
	From \eqref{sobolev} we have the following corollary:
	\begin{cor}
	 For every $0 \leq \gamma <1$ and $0 <\alpha<1-\gamma$ we have the continuous embedding 
	 $$H^\gamma \textnormal{Lip}_S(\alpha; 2) \hookrightarrow \textnormal{Lip}_S(\alpha+\gamma; 2).$$
	\end{cor}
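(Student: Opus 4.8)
The plan is to read the embedding off directly from inequality \eqref{sobolev}, which has already been established in the discussion preceding the statement. The space $H^\gamma \textnormal{Lip}_S(\alpha; 2)$ is naturally normed by $\|f\|_{H^\gamma \textnormal{Lip}_S(\alpha; 2)} := \|(-\Delta_S)^{\frac{\gamma}{2}} f\|_{\textnormal{Lip}_S(\alpha; 2)}$, and the assertion of a continuous embedding is precisely the statement that the identity map is bounded from this normed space into $\textnormal{Lip}_S(\alpha+\gamma; 2)$.

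First I would fix $f \in H^\gamma \textnormal{Lip}_S(\alpha; 2)$, so that by definition $g:=(-\Delta_S)^{\frac{\gamma}{2}} f$ belongs to $\textnormal{Lip}_S(\alpha; 2) \subset L^2(S)$. Then I would invoke \eqref{sobolev} verbatim, which yields
$$\|f\|_{\textnormal{Lip}_S(\alpha+\gamma; 2)} \leq C\,\|(-\Delta_S)^{\frac{\gamma}{2}} f\|_{\textnormal{Lip}_S(\alpha; 2)} = C\,\|f\|_{H^\gamma \textnormal{Lip}_S(\alpha; 2)} < \infty.$$
In particular $f \in \textnormal{Lip}_S(\alpha+\gamma; 2)$, and the displayed inequality is exactly the boundedness of the inclusion, with the very constant $C$ furnished by Corollary \ref{coro3.6}. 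This closes the argument.

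Since the quantitative estimate \eqref{sobolev} already carries all the analytic content, there is no genuine obstacle at this stage; the real work was done in Corollary \ref{coro3.6}. The only point I would take care to justify is the legitimacy of \eqref{sobolev} itself, which was obtained by substituting $(-\Delta_S)^{\frac{\gamma}{2}} f$ in place of $f$ in the estimate $\|(-\Delta_S)^{-\frac{\gamma}{2}} g\|_{\textnormal{Lip}_S(\alpha+\gamma; 2)} \leq C\|g\|_{\textnormal{Lip}_S(\alpha; 2)}$. This substitution is valid because the Helgason-side symbols of $(-\Delta_S)^{-\frac{\gamma}{2}}$ and $(-\Delta_S)^{\frac{\gamma}{2}}$, namely $(\lambda^2+\frac{Q^2}{4})^{\mp \gamma/2}$, multiply to $1$, so that $(-\Delta_S)^{-\frac{\gamma}{2}}(-\Delta_S)^{\frac{\gamma}{2}} f = f$ for every $f \in \mathcal{D}'(S)$ with $(-\Delta_S)^{\frac{\gamma}{2}} f \in L^2(S)$. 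I would also note that the hypothesis $0<\alpha<1-\gamma$ is precisely the range in which Corollary \ref{coro3.6} is applicable, so no additional restriction on the parameters is needed.
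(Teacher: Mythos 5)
Your proposal is correct and follows essentially the same route as the paper: the paper states this corollary as an immediate consequence of the estimate \eqref{sobolev}, which is exactly what you do, reading the boundedness of the inclusion off that inequality with the natural norm $\|f\|_{H^\gamma \textnormal{Lip}_S(\alpha;2)}=\|(-\Delta_S)^{\frac{\gamma}{2}}f\|_{\textnormal{Lip}_S(\alpha;2)}$. Your extra remark justifying the substitution behind \eqref{sobolev} (the symbols $(\lambda^2+\frac{Q^2}{4})^{\mp\gamma/2}$ multiplying to $1$) is a small amount of added care beyond what the paper writes, but it is the same argument.
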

	
	\subsection{An integrability result for  Fourier transform of H\"older-Lipschitz functions on harmonic $NA$ groups} The following result is  a general integrabilty theorem for the function $F$ on $[0, \infty)$ defined by $$F(\lambda):=   \left(\int_N |\widetilde{f}(\lambda+i \gamma_{p'} \rho, n)|^{p'} \,dn \right)^{\frac{1}{p'}},\,\, 1<p \leq 2,
$$ where $\gamma_{p'}=\frac{2}{p'}-1=-\gamma_p.$ This theorem is an extension of a result of Bray (\cite{Bray}) on $\mathbb{R}^n$ to harmonic $NA$ groups, in particular to rank one noncompact symmetric spaces.  We prove this result by applying the similar technique as in \cite{Bray}. In our case, an inequality \eqref{Inte1} obtained as consequence of a result (Theorem \ref{KRSconje}) proved by Kumar et al. \cite{KRS} plays a pivotal role.  
	
\begin{thm} \label{Titch2} Let $S$ be a harmonic NA group of dimension $d$. Let $0 <\alpha \leq 1,\, 1<p \leq 2,$ and let $p'$ be  such that $\frac{1}{p}+\frac{1}{p'}=1.$
    Let $f \in \text{Lip}_S(\alpha; p).$ Then
    the function $F$ defined by 
$$F(\lambda):=   \left(\int_N |\widetilde{f}(\lambda+i \gamma_{p'} \rho, n)|^{p'} \,dn \right)^{\frac{1}{p'}},\,\, 1<p \leq 2,
$$
    belongs to $ L^\beta((0, \infty), |c(\lambda)|^{-2}\, d \lambda)$ provided that 
    \begin{align} \label{beta}
        \frac{dp}{dp+\alpha p-d} < \beta \leq p'.
    \end{align}
\end{thm}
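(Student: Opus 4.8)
The plan is to adapt Bray's technique from $\mathbb{R}^n$ to the Damek--Ricci setting, with the inequality \eqref{Inte1} playing the central role. The key estimate we have for free is Theorem \ref{KRSconje} with $q=p'$, which when restricted to small frequencies $|\lambda|<\frac{1}{t}$ and combined with the Lipschitz hypothesis $\|M_tf-f\|_p = O(t^\alpha)$ yields, via \eqref{Inte1}, the bound
\begin{equation} \label{planA}
	\int_{0}^{1/t} \lambda^{2p'}\, F(\lambda)^{p'}\, |c(\lambda)|^{-2}\, d\lambda \lesssim t^{(\alpha-2)p'} \qquad \text{as } t\to 0.
\end{equation}
Setting $r = \tfrac{1}{t}$, this reads $\int_0^{r} \lambda^{2p'} F(\lambda)^{p'} |c(\lambda)|^{-2}\, d\lambda \lesssim r^{(2-\alpha)p'}$, which is the precise quantitative control on the growth of $F$ that drives the whole argument.

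First I would split the target integral $\int_0^\infty F(\lambda)^\beta |c(\lambda)|^{-2}\, d\lambda$ into a piece near the origin and a piece away from it, say over $[0,1]$ and $[1,\infty)$, handling the two ranges of the Plancherel-type weight $|c(\lambda)|^{-2}$ (which behaves like $\lambda^2$ for $\lambda\le 1$ and like $\lambda^{d-1}$ for $\lambda>1$) separately. On $[0,1]$ the measure is finite and $F$ is controlled by the Hausdorff--Young inequality \eqref{Inte1}-type bound for $p'$, so that piece is essentially harmless and contributes a finite amount for the full range $\beta\le p'$. The substantive part is the tail $\int_1^\infty F(\lambda)^\beta |c(\lambda)|^{-2}\, d\lambda$, and here I would dyadically decompose $[1,\infty) = \bigcup_k [2^k, 2^{k+1})$ and, on each annulus, use H\"older's inequality to trade the exponent $\beta$ against $p'$: writing $F^\beta = F^\beta \lambda^{2\beta} \cdot \lambda^{-2\beta}$ and applying H\"older with exponents $\frac{p'}{\beta}$ and $\frac{p'}{p'-\beta}$, the factor $F^\beta\lambda^{2\beta}$ gets absorbed by \eqref{planA} while the remaining power of $\lambda$ (together with the weight $|c(\lambda)|^{-2}\sim\lambda^{d-1}$) must be summable over the dyadic scales.

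Carrying out that H\"older step, the $\lambda$-powers collect into an exponent that determines convergence of the geometric sum over $k$; the requirement that this exponent be negative is exactly what produces the lower bound $\beta > \frac{dp}{dp+\alpha p - d}$ in \eqref{beta} (after rewriting $p'=\frac{p}{p-1}$ and tracking the dimension $d$ through the $\lambda^{d-1}$ weight). I expect the main obstacle to be the careful bookkeeping of exponents in this H\"older--dyadic argument: one must verify that the power of $2^k$ appearing in each annular estimate is strictly negative precisely when $\beta$ exceeds the stated threshold, and that the boundary behaviour of $|c(\lambda)|^{-2}$ is correctly incorporated (so the effective dimension is $d$, not $d-1$, once the weight is combined with the factor $\lambda^{2\beta}$). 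A secondary technical point is making \eqref{planA} rigorous as a statement about $F$ for \emph{every} scale $r$ simultaneously rather than just in the $t\to 0$ limit, which follows since the Lipschitz estimate holds for all small $t$ and $F$ is locally controlled; the near-origin piece then guarantees integrability on all of $(0,\infty)$ for $\beta$ in the asserted range.
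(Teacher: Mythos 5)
Your proposal is correct: it rests on the same key ingredient as the paper's proof, namely the consequence \eqref{Inte1} of Theorem \ref{KRSconje} with $q=p'$ combined with the Lipschitz hypothesis, followed by a H\"older step that trades the exponent $\beta$ against $p'$ using the weight $\lambda^{2}$, and your exponent arithmetic lands on exactly the threshold in \eqref{beta}: the dyadic annulus $[2^k,2^{k+1})$ contributes $O\bigl(2^{k(d-\alpha\beta-\frac{d\beta}{p'})}\bigr)$, and $d-\alpha\beta-\frac{d\beta}{p'}<0$ is equivalent to $\beta>\frac{dp}{dp+\alpha p-d}$. Where you diverge from the paper is the summation mechanism: the paper works with the cumulative weighted integral $G(\Lambda)=\int_1^\Lambda \lambda^{2\beta}F(\lambda)^\beta|c(\lambda)|^{-2}\,d\lambda$, applies H\"older globally on $[1,\Lambda]$ (splitting the weight $|c(\lambda)|^{-2}$ between the two H\"older factors), and then removes the weight $\lambda^{2\beta}$ by integration by parts, bounding the two resulting terms $I_1$, $I_2$; you instead localize to dyadic annuli and sum a geometric series. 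The two are summation-by-parts in different clothing and yield identical conditions, but your version has two small advantages: on an annulus the weight $|c(\lambda)|^{-2}\sim 2^{k(d-1)}$ is essentially constant, so no care is needed in distributing it across the H\"older factors; and you treat explicitly the piece over $[0,1]$ (finite measure plus Hausdorff--Young, valid for all $\beta\le p'$) and the issue of having the bound \eqref{Inte1} available uniformly at all large scales, both of which the paper's proof leaves implicit. One caution for a written-up version: near the origin the factor $\lambda^{2p'}$ in \eqref{Inte1} degenerates, so the $[0,1]$ piece genuinely requires the Hausdorff--Young inequality of \cite[Theorem 4.6]{RS} rather than any \eqref{Inte1}-type bound; your phrasing slightly conflates the two, but the tool you need is available in the paper.
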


\begin{proof} For $f \in \text{Lip}_S(\alpha; p)$ we have that $$\|M_tf-f\|_{p} = O(t^\alpha)$$ as $t \rightarrow 0.$ Therefore, by setting $\Lambda= \frac{1}{t}$ from inequality \eqref{Inte1} we get that
$$\int_{1}^{\Lambda} \lambda^{2p'} F(\lambda)^{p'} |c(\lambda)|^{-2}d\lambda  \leq C_{p}^{p'} \left( \frac{\|M_tf-f\|_{p}}{t^2} \right)^{p'} \leq C \Lambda^{(2-\alpha)p'},$$
since $\|M_tf-f\|_{p} = O(t^\alpha)$ we get, by putting $t=\frac{1}{\Lambda},$ that  $$\left( \frac{\|M_tf-f\|_{p}}{t^2} \right)^{p'} = \left( O(\Lambda^{-\alpha}) O(\Lambda^2) \right)^{p'}=O(\Lambda^{(2-\alpha)p'}).$$

Take $\beta < p'$ and let 
$$G(\Lambda)= \int_1^\Lambda \lambda^{2 \beta} F(\lambda)^\beta |c(\lambda)|^{-2} d\lambda.$$

By H\"older's inequality we conclude that by using the estimate $|c(\lambda)|^{-2} \lesssim |\lambda|^{d-1}$ as $|\lambda|>1,$ 
\begin{align} \label{t9}
    G (\Lambda) & = \int_{1}^{\Lambda} \left[ \lambda^2\, F(\lambda) \right]^\beta |c(\lambda)|^{-2(\frac{\beta}{p'})} |c(\lambda)|^{-2(1-\frac{\beta}{p'})} d\lambda \nonumber \\ & \leq 
    C \left( \int_{1}^\Lambda \lambda^{2p'} F(\lambda)^{p'} |c(\lambda)|^{-2} d\lambda \right)^{\frac{\beta}{p'}} \left( \int_{1}^\Lambda |c(\lambda)|^{-2}\, d\lambda \right)^{1-\frac{\beta}{p'}} \nonumber 
    \\& \leq C \left( \int_{1}^\Lambda \lambda^{2p'} F(\lambda)^{p'} |c(\lambda)|^{-2} d\lambda \right)^{\frac{\beta}{p'}} \left( \int_{1}^\Lambda |\lambda|^{d-1}\, d\lambda \right)^{1-\frac{\beta}{p'}} \nonumber \\ & \leq 
    C \Lambda^{(2-\alpha)\beta} \Lambda^{d(1-\frac{\beta}{p'})} = O(\Lambda^{2\beta+d-\alpha \beta-\frac{d\beta}{p'}}),
    \end{align}
where $C$ is a generic constant which can be different at each step. 

Now, integration by parts yield the following, 
\begin{align*}
    \int_{1}^\Lambda F(\lambda)^{\beta} |c(\lambda)|^{-2} d\lambda &= \int_{1}^{\Lambda} \lambda^{-2\beta}\, G'(\lambda)\, d\lambda \\&= \Lambda^{-2\beta} G(\Lambda)+ 2 \beta \int_1^{\Lambda} \lambda^{-2\beta-1} G(\lambda)\, d\lambda =I_1+I_2.
    \end{align*}
 The first term $I_1$ is bounded under the condition on $\beta$ by using the above estimate of $G.$ Indeed 
 $$I_1= \Lambda^{-2 \beta} G(\Lambda) \leq \Lambda^{-2 \beta} C \Lambda^{(2-\alpha)\beta} \Lambda^{d(1-\frac{\beta}{p'})} =C \Lambda^{-\alpha \beta+d-\frac{d\beta}{p'}} = O(1)$$ since $-\alpha \beta+d-\frac{d\beta}{p'}= \alpha \beta+d-d \beta (1-\frac{1}{p})= \frac{dp-\beta(dp+\alpha p-d)}{p}$ which is negative by using the condition of $\beta,$ i.e., $\beta> \frac{dp}{dp+\alpha p-d}.$
 
 Similarly for $I_2,$ we get, using the estimate of $G(\lambda),$ 
 \begin{align}
     I_2= \int_1^{\Lambda} \lambda^{-2\beta-1} G(\lambda)\, d\lambda \leq C \int_1^{\Lambda} \lambda^{-\alpha \beta +d-\frac{d\beta}{p'}-1} d\lambda = O(1)
 \end{align}
 because $-\alpha\beta+d-\frac{d \beta}{p'}-1= \frac{dp-\beta(d p+\alpha p-d)-p}{p} <0$ by using the fact that $\beta> \frac{dp}{dp+\alpha p-d},$ which is equivalent to $dp-\beta (dp+\alpha p-d)<0.$
 Therefore, $$\int_1^{\Lambda} F(\lambda)^\beta |c(\lambda)|^{-2} d\lambda =O(1)$$ provided that $d-\alpha \beta -d \beta +\frac{d \beta}{p} <0$ and hence $F \in L^\beta((0, \infty), |c(\lambda)|^{-2} d\lambda),$ since the above $O(1)$ are independent of $\Lambda.$ The proof is complete as conditions on $\beta$ are equivalent to \eqref{beta}.\end{proof}
	\section{Helgason transform of Dini-Lipschitz functions on harmonic NA groups}
	
	In this section, we discuss the result related with the  growth and integrability of Fourier trasform of  Dini-Lipschitz functions on harmonic $NA$ groups. This kind of results were first studied by Younis \cite{Younis} in the context of one dimensional Euclidean space. These results were extended by many researchers in different settings \cite{Younis1, Younis2, FBK, DDR}. In particular, for compact homogeneous manifolds it was shown in \cite{DDR}. In this direction, we state our first result below. 
	
\begin{thm} \label{Them4.1}
    Let $S$ be a harmonic $NA$ group of dimension $d$ and let $\alpha, \beta >0.$ Then the conditions
    \begin{equation} \label{Dinilip}
        \|M_t f-f\|_{L^2(S)}= O \left( \frac{t^\alpha}{(\log \frac{1}{t})^\beta}\right)\,\,\text{as}\quad t \rightarrow 0 
    \end{equation}
    and 
    \begin{equation} \label{dini5}
        \int_{r}^\infty \int_N |\widetilde{f}(\lambda, n)|^2 dn \, d\lambda = O\left( \frac{r^{-2\alpha-d+1}}{(\log r)^{2\beta}}\right)\,\,\,\text{as}\,\, r \rightarrow \infty
    \end{equation} are equivalent. 
\end{thm}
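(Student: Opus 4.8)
The plan is to transplant the proof of Theorem \ref{Titch1} to the Dini--Lipschitz scale, with two new ingredients: tracking the slowly-varying factor $(\log\frac1t)^{-\beta}$, and converting between the unweighted tail appearing in \eqref{dini5} and the Plancherel weight $|c(\lambda)|^{-2}$. Throughout I write $G(\lambda)=\int_N|\widetilde f(\lambda,n)|^2\,dn$ and use the Plancherel identity
$$\|M_tf-f\|_2^2=\int_0^\infty|1-\phi_\lambda(a_t)|^2\,G(\lambda)\,|c(\lambda)|^{-2}\,d\lambda,$$
the spherical estimates of Lemma \ref{estijac}, and the comparison $|c(\lambda)|^{-2}\asymp\lambda^{d-1}$ for $\lambda>1$.

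For \eqref{Dinilip}$\Rightarrow$\eqref{dini5} I restrict the Plancherel integral to $\lambda\in[\frac1t,\frac2t]$ and use the lower bound $|1-\phi_\lambda(a_t)|\ge c$ valid for $\lambda t\ge1$; with $r=\frac1t$ this gives the weighted block estimate $\int_r^{2r}G(\lambda)|c(\lambda)|^{-2}\,d\lambda=O(r^{-2\alpha}(\log r)^{-2\beta})$. Since $|c(\lambda)|^{-2}\gtrsim r^{d-1}$ on $[r,2r]$, dividing out the weight turns this into $\int_r^{2r}G(\lambda)\,d\lambda=O(r^{-2\alpha-d+1}(\log r)^{-2\beta})$, and summing the dyadic blocks $[2^jr,2^{j+1}r]$ yields \eqref{dini5}: the series $\sum_j 2^{-j(2\alpha+d-1)}$ converges because $2\alpha+d-1>0$, while $(\log(2^jr))^{-2\beta}\le(\log r)^{-2\beta}$ lets the logarithm leave the sum.

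For the converse I first upgrade the unweighted tail to a weighted one. With $H(r)=\int_r^\infty G(\lambda)\,d\lambda$, an integration by parts against $\lambda^{d-1}$ gives
$$\int_r^\infty G(\lambda)|c(\lambda)|^{-2}\,d\lambda\asymp r^{d-1}H(r)+(d-1)\int_r^\infty H(\lambda)\lambda^{d-2}\,d\lambda,$$
and inserting \eqref{dini5} (the boundary term is $O(r^{-2\alpha}(\log r)^{-2\beta})$, and the remaining integral is dominated by $(\log r)^{-2\beta}\int_r^\infty\lambda^{-2\alpha-1}\,d\lambda$) shows $\widetilde F(r):=\int_r^\infty G|c|^{-2}=O(r^{-2\alpha}(\log r)^{-2\beta})$. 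From here the computation of Theorem \ref{Titch1} carries over with only the logarithmic modification: splitting $\|M_tf-f\|_2^2=I_1+I_2$ at $\lambda=\frac1t$, the tail obeys $I_2\le4\widetilde F(\tfrac1t)=O(t^{2\alpha}(\log\tfrac1t)^{-2\beta})$ by $|\phi_\lambda(a_t)|\le1$, and $I_1$ is handled by $|1-\phi_\lambda(a_t)|\le t^2(\lambda^2+\tfrac{Q^2}4)$ followed by an integration by parts against $\widetilde F$.

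I expect the main obstacle to be this two-sided passage between the unweighted tail \eqref{dini5} and the weighted tail $\widetilde F$, which is exactly what produces the exponent shift $-d+1$ and is absent from Theorem \ref{Titch1}. The accompanying delicacy is the logarithmic bookkeeping in $I_1$: after integration by parts $I_1$ reduces to a multiple of $t^2\int_0^{1/t}\lambda\,\widetilde F(\lambda)\,d\lambda$, for which one needs the slowly-varying asymptotic
$$\int_2^{1/t}\frac{\lambda^{1-2\alpha}}{(\log\lambda)^{2\beta}}\,d\lambda\sim\frac{(1/t)^{2-2\alpha}}{(2-2\alpha)(\log\frac1t)^{2\beta}},$$
so that $I_1=O(t^{2\alpha}(\log\tfrac1t)^{-2\beta})$ with the power of the logarithm reproduced exactly rather than degraded; the contribution of small $\lambda$ is $O(t^2)$ and remains of lower order in the natural range $0<\alpha<1$.
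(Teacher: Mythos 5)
Your proposal is correct and follows essentially the same route as the paper's proof: Plancherel plus the spherical-function estimates of Lemma \ref{estijac}, dyadic block estimates for the implication \eqref{Dinilip}$\Rightarrow$\eqref{dini5}, and, for the converse, the split $I_1+I_2$ at $\lambda=\frac{1}{t}$ followed by integration by parts against the weighted tail. The only real deviation is in how the unweighted tail \eqref{dini5} is traded for the weighted one: you integrate by parts against $\lambda^{d-1}$ (boundary term plus $\int_r^\infty H(\lambda)\lambda^{d-2}\,d\lambda$), whereas the paper converts each dyadic block $[2^jr,2^{j+1}r]$ via $|c(\lambda)|^{-2}\asymp\lambda^{d-1}$ and sums a geometric series; both devices are equally valid, and your bookkeeping of the logarithmic factors and of the small-$\lambda$ region (where the paper applies its $O$-bounds outside their range of validity without comment) is in fact the more careful of the two, including your correct observation that the converse direction genuinely requires $0<\alpha<1$.
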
	
	\begin{proof}
	Let us assume that \eqref{Dinilip} holds.  By the Plancherel theorem and the fact that $$\widetilde{M_t f}(\lambda, n)= \phi_\lambda(a_t) \widetilde{f}(\lambda, n)$$ we get 
	\begin{align*}
	    \|M_tf-f\|_2^2&= \|\widetilde{M_t f-f}\|^2_{L^2(\mathbb{R}_+ \times N, |c(\lambda)|^{-2} d\lambda \, dn)} \\&=  \int_{0}^\infty \int_N |1- \phi_\lambda(a_t) |^2 |\widetilde{f}(\lambda, n)|^2 |c(\lambda)|^{-2} d\lambda\, dn \\&=  \int_{0}^\infty  |1- \phi_\lambda(a_t) |^2 G(\lambda) |c(\lambda)|^{-2} d\lambda,
	\end{align*}
where $ G(\lambda)=	\int_N |\widetilde{f}(\lambda, n)|^2 dn.$
Now, by \eqref{Dinilip} it follows that 
$$  \int_{0}^\infty  |1- \phi_\lambda(a_t) |^2 G(\lambda) |c(\lambda)|^{-2} d\lambda = O\left( \frac{t^{2\alpha}}{(\log \frac{1}{t})^{2\beta}}\right)\,\,\text{as}\, t \rightarrow 0.$$

Now, if $\lambda \in [\frac{1}{t}, \frac{2}{t}]$ then $|\lambda t| \geq 1$ and by Lemma \ref{estijac}(iii) it follows that there exists a constant $c>0$ such that 
$$ \frac{|1-\phi_\lambda(a_t)|^2}{c^2} \geq 1.$$
Using this, we have 
\begin{align*}
    \int_{\frac{1}{t}}^{\frac{2}{t}} G(\lambda)\, |c(\lambda)|^{-2}\, d\lambda &\leq \frac{1}{c^2}  \int_{\frac{1}{t}}^{\frac{2}{t}} |1-\phi_\lambda(a_t)|^2 G(\lambda)\, |c(\lambda)|^{-2}\, d\lambda \\ &\leq \frac{1}{c^2}  \int_0^\infty |1-\phi_\lambda(a_t)|^2 G(\lambda)\, |c(\lambda)|^{-2}\, d\lambda\\&= O\left( \frac{t^{2\alpha}}{(\log \frac{1}{t})^{2\beta}}\right)\,\,\text{as}\, t \rightarrow 0.
\end{align*}
By setting $r=\frac{1}{t},$ we may write the above inequality as 
$$\int_{r}^{2r} G(\lambda)\, |c(\lambda)|^{-2}\, d\lambda \leq C' \left( \frac{r^{-2 \alpha}}{ (\log r)^{2 \beta}} \right),$$ where $C'>0$ is a generic constant which can be different for different inequalites below. 
Consequently, we have 
\begin{align*}
    \int_{r}^{\infty} G(\lambda)\, |c(\lambda)|^{-2}\, d\lambda & = \left[ \int_{r}^{2r}+\int_{2r}^{4r}+\int_{4r}^{8r}+\cdots \right]G(\lambda)\, |c(\lambda)|^{-2}\, d\lambda  \\& \leq C' \left(\frac{r^{-2 \alpha}}{ (\log r)^{2 \beta}}+\frac{(2r)^{-2 \alpha}}{ (\log 2r)^{2 \beta}}+\frac{(4r)^{-2 \alpha}}{ (\log 4r)^{2 \beta}}+ \cdots \right) \\&\leq C' \frac{r^{-2 \alpha}}{ (\log r)^{2 \beta}} \left[1+2^{-2\alpha}+ (2^{-2\alpha})^2+(2^{-2\alpha})^3+\cdots \right] \\&= C' (1-2^{-2\alpha})^{-1} \frac{r^{-2 \alpha}}{ (\log r)^{2 \beta}}.
\end{align*}
By using the estimate $|c(\lambda)|^{-2} \geq c_1 \lambda^{d-1}$ for $\lambda \rightarrow \infty$ we get 
$$\int_{r}^{\infty} G(\lambda)\, d\lambda= c_1 r^{-d+1}\int_{r}^{\infty} G(\lambda)\, |c(\lambda)|^{-2}\, d\lambda= c_1 C'(1-2^{-2\alpha})^{-1} r^{-d+1}  \frac{r^{-2 \alpha}}{ (\log r)^{2 \beta}} $$ and therefore, we have 

\begin{equation}
        \int_{r}^\infty \int_N |\widetilde{f}(\lambda, n)|^2 dn \, d\lambda = O\left( \frac{r^{-2\alpha-d+1}}{(\log r)^{2\beta}}\right)\,\,\,\text{as}\,\, r \rightarrow \infty.
    \end{equation}
    
    Now we will prove the converse implication. Assume that \eqref{dini5} holds, i.e., 
    \begin{equation}
        \int_{r}^\infty \int_N |\widetilde{f}(\lambda, n)|^2 dn \, d\lambda = O\left( \frac{r^{-2\alpha-d+1}}{(\log r)^{2\beta}}\right)\,\,\,\text{as}\,\, r \rightarrow \infty.
    \end{equation}
As earlier we set $G(\lambda)= \int_N |\widetilde{f}(\lambda, n)|^2\, dn.$ Then by using the estimate $|c(\lambda)|^{-2} \leq c_2 |\lambda|^{d-1}$ we get
\begin{align*}
    \int_{r}^{2r} G(\lambda)\, |c(\lambda)|^{-2}d\lambda &\leq c_2 \int_{r}^{2r} G(\lambda)\, \lambda^{d-1} d\lambda  \leq c_2 (2r)^{d-1} \int_{r}^{2r} G(\lambda)\, d\lambda 
    \\&\leq C r^{d-1} \int_r^\infty G(\lambda)\, d\lambda
     \leq C \frac{r^{-2\alpha}}{(\log r)^{2 \beta}}.
\end{align*}
As a consequence, it follows that 
\begin{align*}
    \int_r^\infty G(\lambda) |c(\lambda)|^{-2}\, d\lambda &= \sum_{j=0}^\infty \int_{2^j r}^{2^{j+1}r} G(\lambda)\, |c(\lambda)|^{-2}\, d\lambda \\& \leq C \sum_{j=0}^{\infty}\frac{(2^{-2\alpha})^k r^{-2 \alpha}}{(\log r)^{2 \beta}} \leq C \frac{r^{-2 \alpha}}{(\log r)^{2\beta}},
\end{align*} where $C$ is a positive constant.
Further, we have 
\begin{align*}
	    \|M_tf-f\|_{L^2(S)}^2&= \|\widetilde{M_t f-f}\|^2_{L^2(\mathbb{R}_+ \times N, |c(\lambda)|^{-2} d\lambda \, dn)} \\&=  \int_{0}^\infty \int_N |1- \phi_\lambda(a_t) |^2 |\widetilde{f}(\lambda, n)|^2 |c(\lambda)|^{-2} d\lambda\, dn \\&=  \int_{0}^\infty  |1- \phi_\lambda(a_t) |^2 G(\lambda) |c(\lambda)|^{-2} d\lambda \\&= \int_{0}^{\frac{1}{t}} |1- \phi_\lambda(a_t) |^2 G(\lambda) |c(\lambda)|^{-2} d\lambda+ \int_{\frac{1}{t}}^{\infty} |1- \phi_\lambda(a_t) |^2 G(\lambda) |c(\lambda)|^{-2} d\lambda \\&= 
	    I_1+I_2.
	\end{align*}
	For the second integral $I_2,$ by using $|\phi_\lambda(a_t)| \leq 1$ for $\lambda \in \mathbb{R}_+,$ we have 
	\begin{align*}
	    I_2 &=\int_{\frac{1}{t}}^{\infty} |1- \phi_\lambda(a_t) |^2 G(\lambda) |c(\lambda)|^{-2} d\lambda \\&\leq  4 \int_{\frac{1}{t}}^\infty G(\lambda)\, |c(\lambda)|^{-2} d\lambda= 4 C \frac{r^{-2\alpha}}{(\log r)^{2 \beta}}
	\end{align*}
	and so 
	\begin{equation}\label{I_2vi}
	    I_2= O \left( \frac{r^{-2\alpha}}{(\log r)^{2 \beta}}\right).
	\end{equation}
	Next, to estimate $I_1,$ we will use the facts that $|1-\phi_\lambda(t)| \leq 1+|\phi_\lambda(t)| \leq 2$ and  $|1-\phi_\lambda(t)| \leq t^2 (\lambda^2+\frac{Q^2}{4})$ for $\lambda,\, t \in \mathbb{R}_+$ (see Lemma \ref{estijac}), to obtain 
	\begin{align*}
	    I_1 &= \int_{0}^{\frac{1}{t}} |1- \phi_\lambda(a_t) |^2 G(\lambda) |c(\lambda)|^{-2} d\lambda  = \int_{0}^{\frac{1}{t}} |1- \phi_\lambda(a_t) |\,|1- \phi_\lambda(a_t) | G(\lambda) |c(\lambda)|^{-2} d\lambda \\ & \leq \int_{0}^{\frac{1}{t}} (1+| \phi_\lambda(a_t)| )\,|1- \phi_\lambda(a_t) | G(\lambda) |c(\lambda)|^{-2} d\lambda \\&  \leq 2\int_{0}^{\frac{1}{t}} |1- \phi_\lambda(a_t) | G(\lambda) |c(\lambda)|^{-2} d\lambda  \leq  2 t^2 \int_{0}^{\frac{1}{t}} (\lambda^2+\frac{Q^2}{4})\, G(\lambda)\, |c(\lambda)|^{-2}\, d\lambda.
	\end{align*} For a while, we put 
	$F(r)=\int_r^\infty G(\lambda) d\mu(\lambda),$ where $d\mu(\lambda)= |c(\lambda)|^{-2} d\lambda.$ Then we have 
    \begin{align}
        I_1 & = 2 t^2 \int_{0}^{\frac{1}{t}} -(r^{2}+\frac{Q^2}{4}) F'(r)\, dr \leq 2 t^2 \int_{0}^{\frac{1}{t}} -r^{2} F'(r)\, dr  \\&= 2t^2 \left( \frac{-1}{t^2} F(\frac{1}{t})+2 \int_0^{\frac{1}{t}} r F(r)\, dr \right)= -F(\frac{1}{t})+4t^2 \int_0^{\frac{1}{t}} r F(r)\, dr \\&\leq 4t^2 \int_0^{\frac{1}{t}} r F(r)\, dr. 
    \end{align}
    Since $F(r)= \int_r^\infty G(\lambda) d\mu(\lambda)= O\left( \frac{r^{-2 \alpha}}{(\log r)^{2 \beta}}\right)$ we get 
    $$I_2 \leq C t^2 \int_0^{\frac{1}{t}} r\left( \frac{r^{-2 \alpha}}{(\log r)^{2 \beta}}\right) \leq  C \frac{t^{2\alpha}}{(\log \frac{1}{t})^{2\beta}}, $$ where $C$ is a positive constant. Hence, by combning $I_1$ and $I_2$ we get 
    $$\|M_tf-f\|_{L^2(S)}^2= \left( \frac{t^{2\alpha}}{(\log \frac{1}{t})^{2\beta}}\right)\,\,\,\text{as}\, t \rightarrow 0$$ proving \eqref{Dinilip}.
	\end{proof}
	This following Theorem can be proved in a similar way as Theorem \ref{Them4.1} by using the following version of the Hausdorff-Young inequality \cite[Theorem 4.6]{RS}
	\begin{equation} \label{HYtem}
	    \left( \int_{\mathbb{R}} \int_{N} |\widetilde{f}(\lambda+\gamma_{p'} \rho, n)|^{p'}\, dn |c(\lambda)|^{-2}\, d\lambda \right)^{\frac{1}{p'}} \leq C_p \|f\|_p
	\end{equation} and the fact that $\widetilde{M_t f}(z, n)= \widetilde{f}(z, n)\, \phi_z(a_t)$ for $t \in \mathbb{R}_+$ and $z \in S_p.$

\begin{thm}Let $S$ be a harmonic $NA$ group of dimension $d$ and let $\alpha, \beta >0.$ Let $f \in L^p(S), 1 < p \leq 2$ with $\frac{1}{p}+\frac{1}{p'}=1,$ be such that 
	\begin{equation} \label{vis1}
	    \|M_t f-f\|_{L^p(S)}= O \left( \frac{t^\alpha}{(\log \frac{1}{t})^\beta}\right)\,\,\text{as}\quad t \rightarrow 0. 
	    \end{equation}
	    Then 
	    \begin{equation} \label{Vish30}
	        \int_r^{\infty }\int_N |\widetilde{f}(\lambda+i\gamma_{p'} \rho, n)|^{p'} dn |c(\lambda)|^{-2} \,d\lambda = O(r^{-p' \alpha} (\log r)^{-p' \beta}) \,\,\,\,\text{as}\,\, r \rightarrow \infty.
	    \end{equation}
	\end{thm}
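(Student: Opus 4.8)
The plan is to mirror the proof of Theorem \ref{Them4.1}, replacing the Plancherel identity by the Hausdorff--Young inequality \eqref{HYtem}; since the present statement is only the forward implication, only the lower bound on the spherical function will be needed. Write $G(\lambda):=\int_N|\widetilde f(\lambda+i\gamma_{p'}\rho,n)|^{p'}\,dn$, so that the quantity to be controlled is $\int_r^\infty G(\lambda)\,|c(\lambda)|^{-2}\,d\lambda$. Using $\widetilde{M_tf}(z,n)=\widetilde f(z,n)\,\phi_z(a_t)$ with $z=\lambda+i\gamma_{p'}\rho$, one has $\widetilde{(M_tf-f)}(\lambda+i\gamma_{p'}\rho,n)=\widetilde f(\lambda+i\gamma_{p'}\rho,n)\,(\phi_{\lambda+i\gamma_{p'}\rho}(a_t)-1)$. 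Applying \eqref{HYtem} to $M_tf-f$, raising \eqref{vis1} to the power $p'$, and discarding the negative half-line (the integrand being nonnegative) gives
\begin{equation*}
\int_0^\infty |1-\phi_{\lambda+i\gamma_{p'}\rho}(a_t)|^{p'}\,G(\lambda)\,|c(\lambda)|^{-2}\,d\lambda \;\le\; C_p^{p'}\,\|M_tf-f\|_p^{p'}\;=\;O\!\left(\frac{t^{p'\alpha}}{(\log\tfrac1t)^{p'\beta}}\right)
\end{equation*}
as $t\to 0$.

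Next I would localise in frequency. For $\lambda\in[\tfrac1t,\tfrac2t]$ one has $\lambda t\ge 1$, and I would invoke a lower bound $|1-\phi_{\lambda+i\gamma_{p'}\rho}(a_t)|\ge c$ with $c>0$ independent of $\lambda$ and $t$, i.e.\ the analogue of Lemma \ref{estijac}(iii) for the shifted spectral parameter. Bounding $G$ by $c^{-p'}|1-\phi_{\lambda+i\gamma_{p'}\rho}(a_t)|^{p'}G$ on this band and then enlarging the domain of integration back to $(0,\infty)$ yields
\begin{equation*}
\int_{\frac1t}^{\frac2t} G(\lambda)\,|c(\lambda)|^{-2}\,d\lambda \;\le\; \frac{1}{c^{p'}}\int_0^\infty |1-\phi_{\lambda+i\gamma_{p'}\rho}(a_t)|^{p'}\,G(\lambda)\,|c(\lambda)|^{-2}\,d\lambda \;=\;O\!\left(\frac{t^{p'\alpha}}{(\log\tfrac1t)^{p'\beta}}\right).
\end{equation*}
Setting $r=\tfrac1t$, this reads $\int_r^{2r}G(\lambda)\,|c(\lambda)|^{-2}\,d\lambda\le C'\,r^{-p'\alpha}(\log r)^{-p'\beta}$ as $r\to\infty$.

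Finally I would sum dyadically. Writing $\int_r^\infty=\sum_{j\ge0}\int_{2^jr}^{2^{j+1}r}$ and using $\log(2^jr)\ge\log r$, each band contributes at most $C'(2^jr)^{-p'\alpha}(\log r)^{-p'\beta}$, and $\sum_{j\ge0}2^{-jp'\alpha}$ converges since $\alpha>0$. This produces
\begin{equation*}
\int_r^\infty G(\lambda)\,|c(\lambda)|^{-2}\,d\lambda \;=\; O\!\left(\frac{r^{-p'\alpha}}{(\log r)^{p'\beta}}\right)\qquad\text{as }r\to\infty,
\end{equation*}
which is exactly \eqref{Vish30}. In contrast with Theorem \ref{Them4.1}, the density $|c(\lambda)|^{-2}$ is retained in the conclusion here, so the extra conversion of $\int G\,|c|^{-2}$ into $\int G\,d\lambda$ via $|c(\lambda)|^{-2}\asymp\lambda^{d-1}$ is not required.

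The main obstacle is precisely the lower bound $|1-\phi_{\lambda+i\gamma_{p'}\rho}(a_t)|\ge c$ used in the localisation step: Lemma \ref{estijac}(iii) is stated only for a real spectral parameter, whereas here the elementary spherical function is evaluated at the complex point $z=\lambda+i\gamma_{p'}\rho$ lying in the $L^p$-strip. I expect this to follow from the asymptotics of the Jacobi functions $\phi_\mu^{\alpha,\beta}$ for complex $\mu$ together with the identity $\phi_\lambda(t)=\phi_{2\lambda}^{\alpha,\beta}(t/2)$: the imaginary translate $i\gamma_{p'}\rho$ is fixed, $\phi_z(a_t)$ stays bounded, and it should remain bounded away from $1$ once $\lambda t\ge1$; the delicate point is obtaining this bound uniformly in $\lambda$ and $t$. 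Apart from this estimate, the argument is identical to the proof of Theorem \ref{Them4.1} with the exponent $2$ replaced by $p'$.
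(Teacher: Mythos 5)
Your proposal is, step for step, the paper's own proof: the paper likewise applies the Hausdorff--Young inequality \eqref{HYtem} to $M_tf-f$, localises to the band $[\tfrac{1}{t},\tfrac{2}{t}]$ using the lower bound of Lemma \ref{estijac}(iii), and sums dyadically as in Theorem \ref{Them4.1}, keeping the density $|c(\lambda)|^{-2}$ in the conclusion. The obstacle you flag is real but is not resolved by the paper either: its proof writes $|1-\phi_\lambda(a_t)|$ with the real spectral parameter throughout the chain of inequalities and invokes Lemma \ref{estijac}(iii) directly, even though the Hausdorff--Young step actually produces the factor $|1-\phi_{\lambda+i\gamma_{p'}\rho}(a_t)|$ at the complex shifted point, exactly as you observe; so your writeup is the same argument carried out with more care at precisely the point where the paper is loose, and supplying the uniform lower bound for $|1-\phi_{\lambda+i\gamma_{p'}\rho}(a_t)|$ when $\lambda t\ge 1$ (e.g.\ via the Bessel-type asymptotics of Jacobi functions with a fixed imaginary shift) would complete both proofs.
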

	\begin{proof}
	Suppose that \eqref{vis1} holds. If $\lambda \in \left[\frac{1}{t}, \frac{2}{t} \right], (\lambda, t \in \mathbb{R}_+)$ then $\lambda t \geq 1$ and so from Lemma \ref{estijac} we have 
	$$ \frac{1}{c^{p'}}|1-\phi_\lambda(a_t)|^{p'} \geq 1.$$ 
	Set $G(\lambda)= \int_{N} |\widetilde{f}(\lambda+i\gamma_{p'} \rho, n)|^{p'}\, dn.$  Now by using \eqref{HYtem} we get 
\begin{align*}
    \int_{\frac{1}{t}}^{\frac{2}{t}} G(\lambda) |c(\lambda)|^{-2}\, d\lambda & \leq  \frac{1}{c^{p'}} \int_{\frac{1}{t}}^{\frac{2}{t}}|1-\phi_\lambda(a_t)|^{p'} G(\lambda) |c(\lambda)|^{-2}\, d\lambda \\&\leq \frac{1}{c^{p'}} \int_{\frac{1}{t}}^{\frac{2}{t}} \int_{N} |1-\phi_\lambda(a_t)|^{p'}  |\widetilde{f}(\lambda+i\gamma_{p'} \rho, \,n)|^{p'}\, dn |c(\lambda)|^{-2}\, d\lambda \\&\leq \frac{1}{c^{p'}} \int_{\mathbb{R}} \int_{N} |1-\phi_\lambda(a_t)|^{p'}  |\widetilde{f}(\lambda+i\gamma_{p'} \rho, \,n)|^{p'}\, dn |c(\lambda)|^{-2}\, d\lambda \\&\leq \frac{C_p}{c^{p'}} \|\widetilde{M_t f- f}\|_{p'}^{p'}  \\& \leq C \|M_t f-f\|_{L^p(S)}^{p'}= O\left( \frac{t^{\alpha p'}}{(\log \frac{1}{t})^{\beta p'}} \right).
\end{align*}
Therefore, by putting $r=\frac{1}{t},$ we have 
$$\int_{r}^{2r} G(\lambda)\, |c(\lambda)|^{-2}\, d\lambda \leq C \left(\frac{r^{-\alpha p'}}{(\log r)^{\beta p'}}  \right).$$
Now, as in Theorem \ref{Them4.1} we get 
$$\int_r^{\infty} G(\lambda)\, |c(\lambda)|^{-2}\, d\lambda \leq C_\alpha\, \frac{r^{-\alpha p'}}{(\log r)^{\beta p'}}\,\,\,\,\,\text{as}\,\, r \rightarrow \infty,$$
with $C_\alpha= C (1-2^{p' \alpha})^{-1}.$
Consequently, we have 
	\begin{equation*}
	        \int_r^{\infty }\int_N |\widetilde{f}(\lambda, n)|^{p'} dn |c(\lambda)|^{-2} \,d\lambda = O(r^{-p' \alpha} (\log r)^{-p' \beta}) \,\,\,\,\text{as}\,\, r \rightarrow \infty,
	    \end{equation*} proving \eqref{Vish30}. \end{proof}
	
	The following theorem is an analogue of Theorem \ref{Titch2} for Dini-Lipschitz functions on harmonic $NA$ groups. This theorem was proved by Younis \cite{Younis} on circle and extended in \cite{DDR} to the setting of general compact homogeneoups spaces. This theorem is new even in the case of rank one noncompact symmetric spaces.
	
	\begin{thm} \label{DTitch2} Let $S$ be a harmonic NA group of dimension $d$. Let $0 <\alpha \leq 1,\,\gamma>0,\, 1<p \leq 2,$ and let $p'$ be  such that $\frac{1}{p}+\frac{1}{p'}=1.$
    Let $f \in L^p(S)$ be such that 
	\begin{equation} \label{vish12}
	    \|M_t f-f\|_{L^p(S)}= O \left( \frac{t^\alpha}{(\log \frac{1}{t})^\gamma}\right)\,\,\text{as}\,\, t \rightarrow 0. 
	    \end{equation} Then
    the function $F$ defined by 
$$F(\lambda):=   \left(\int_N |\widetilde{f}(\lambda+i \gamma_{p'} \rho, n)|^{p'} \,dn \right)^{\frac{1}{p'}},
$$
    belongs to $ L^\beta((0, \infty), |c(\lambda)|^{-2}\, d \lambda)$ provided that 
    \begin{align} \label{beta2}
        \frac{dp}{dp+\alpha p-d} < \beta \leq p'.
    \end{align}
	
	\end{thm}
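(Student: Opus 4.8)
The plan is to adapt the proof of Theorem \ref{Titch2} almost verbatim, carrying the extra logarithmic factor $(\log\tfrac{1}{t})^{-\gamma}$ from \eqref{vish12} through every estimate and observing that it only sharpens (or at worst leaves unchanged) the polynomial decay rates, so that the admissible range of $\beta$ in \eqref{beta2} coincides exactly with that of Theorem \ref{Titch2}. First I would split the integration domain as $(0,\infty)=(0,1]\cup[1,\infty)$. On the piece $(0,1]$ the measure $|c(\lambda)|^{-2}\,d\lambda\asymp\lambda^2\,d\lambda$ is finite, and the Hausdorff-Young inequality \eqref{HYtem} gives $F\in L^{p'}((0,1],|c(\lambda)|^{-2}\,d\lambda)$; since $\beta\le p'$ on a finite-measure space, H\"older's inequality yields $F\in L^\beta((0,1],|c(\lambda)|^{-2}\,d\lambda)$ automatically. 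Thus everything reduces to the tail $[1,\infty)$.

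For the tail, putting $\Lambda=\tfrac{1}{t}$ in \eqref{Inte1} and inserting the hypothesis \eqref{vish12} gives
$$\left(\frac{\|M_tf-f\|_p}{t^2}\right)^{p'} = O\!\left(\frac{\Lambda^{(2-\alpha)p'}}{(\log\Lambda)^{\gamma p'}}\right),$$
so that
$$\int_1^\Lambda \lambda^{2p'}\,F(\lambda)^{p'}\,|c(\lambda)|^{-2}\,d\lambda \lesssim \frac{\Lambda^{(2-\alpha)p'}}{(\log\Lambda)^{\gamma p'}}.$$
Next, for $\beta<p'$ I set $G(\Lambda)=\int_1^\Lambda \lambda^{2\beta}F(\lambda)^\beta|c(\lambda)|^{-2}\,d\lambda$ and apply H\"older's inequality with exponents $p'/\beta$ and its conjugate, together with $|c(\lambda)|^{-2}\lesssim\lambda^{d-1}$ for $\lambda>1$, to obtain
$$G(\Lambda) \lesssim \left(\frac{\Lambda^{(2-\alpha)p'}}{(\log\Lambda)^{\gamma p'}}\right)^{\beta/p'}\Lambda^{d(1-\beta/p')} = \frac{\Lambda^{2\beta+d-\alpha\beta-d\beta/p'}}{(\log\Lambda)^{\gamma\beta}}.$$

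I would then integrate by parts exactly as in Theorem \ref{Titch2}, writing $\int_1^\Lambda F^\beta|c|^{-2}\,d\lambda = \Lambda^{-2\beta}G(\Lambda) + 2\beta\int_1^\Lambda \lambda^{-2\beta-1}G(\lambda)\,d\lambda =: I_1+I_2$. The boundary term obeys $I_1\lesssim \Lambda^{d-\alpha\beta-d\beta/p'}(\log\Lambda)^{-\gamma\beta}$, and the hypothesis $\beta>\frac{dp}{dp+\alpha p-d}$ is precisely what forces the polynomial exponent $d-\alpha\beta-\frac{d\beta}{p'}$ to be negative (the same computation as in Theorem \ref{Titch2}, namely it equals $\frac{dp-\beta(dp+\alpha p-d)}{p}$); since $\gamma\beta>0$ the logarithmic factor is then bounded and $I_1=O(1)$. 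For $I_2$ the integrand is $\lesssim \lambda^{d-\alpha\beta-d\beta/p'-1}(\log\lambda)^{-\gamma\beta}$, whose polynomial exponent is strictly below $-1$ under the same condition, so the integral converges uniformly in $\Lambda$ and $I_2=O(1)$.

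The only genuinely new technical wrinkle, beyond this bookkeeping, is the apparent singularity of the bound for $G$ near $\lambda=1$, where $(\log\lambda)^{-\gamma\beta}$ blows up. I expect this to be the main obstacle, and it is resolved by recalling that the $O$-estimate for $G$ is asymptotic, valid for $\lambda\ge\Lambda_0$ with some fixed $\Lambda_0>1$: on the compact interval $[1,\Lambda_0]$ the monotone function $G$ is simply bounded and contributes a finite constant to $I_2$, while on $[\Lambda_0,\infty)$ the factor $(\log\lambda)^{-\gamma\beta}$ is bounded above and the polynomial decay already guarantees convergence. Consequently the logarithmic refinement never shrinks the range of admissible $\beta$, which explains why \eqref{beta2} is identical to the condition of Theorem \ref{Titch2}, and combining the $(0,1]$ and $[1,\infty)$ contributions yields $F\in L^\beta((0,\infty),|c(\lambda)|^{-2}\,d\lambda)$.
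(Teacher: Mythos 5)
Your proof is correct and follows essentially the same route as the paper: carry the factor $(\log\tfrac{1}{t})^{-\gamma}$ through the argument of Theorem \ref{Titch2} to get $G(\Lambda)=O\left(\Lambda^{2\beta+d-\alpha\beta-\frac{d\beta}{p'}}(\log\Lambda)^{-\gamma\beta}\right)$, integrate by parts into $I_1+I_2$, and observe that the condition $\beta>\frac{dp}{dp+\alpha p-d}$ makes the polynomial exponents negative while the logarithmic factor only helps. Your extra care with the piece $(0,1]$ (via Hausdorff--Young and H\"older) and with the harmless blow-up of $(\log\lambda)^{-\gamma\beta}$ near $\lambda=1$ goes slightly beyond the paper's write-up, which treats only $\int_1^\Lambda$ and applies the asymptotic bound without comment, but this is a refinement of the same argument rather than a different approach.
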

	
	\begin{proof}
Arguing as in the proof of Theorem \ref{Titch2}, under the assumption of Theorem \ref{DTitch2} instead of \eqref{t9}  we get 
\begin{align}
    G (\Lambda) = O(\Lambda^{2\beta+d-\alpha \beta-\frac{d\beta}{p'}} (\log \Lambda)^{-\beta\gamma}).
    \end{align} 
    As a consequence,  we have the following $$\int_{1}^\Lambda F(\lambda)^\beta\, |c(\lambda)|^{-2}\, d\lambda =\Lambda^{-2\beta} G(\Lambda)+2 \beta \int_1^\Lambda \lambda^{-2\beta-1} G(\lambda)d\lambda = I_1+I_2.$$
 The term    $I_1$ is bounded under the condition on $\beta$ by using the above estimate of $G.$ Indeed 
 $$I_1= \Lambda^{-2 \beta} G(\Lambda) = O( \Lambda^{-\alpha \beta+d-\frac{d\beta}{p'}}(\log \Lambda)^{-\beta \gamma}).$$ The right hand side of the last estimate is bounded as $\Lambda \rightarrow \infty$ as we  must have $-\alpha \beta+d-\frac{d\beta}{p'}= \alpha \beta+d-d \beta (1-\frac{1}{p})= \frac{dp-\beta(dp+\alpha p-d)}{p}<0$ which is always the case by using the condition of $\beta,$ i.e., $\beta> \frac{dp}{dp+\alpha p-d}$ which also holds according to our assumption.  
 
 Similarly for $I_2,$ we get, using the estimate of $G(\lambda),$ 
 \begin{align}
     I_2= \int_1^{\Lambda} \lambda^{-2\beta-1} G(\lambda)\, d\lambda = O( \Lambda^{-\alpha \beta +d-\frac{d\beta}{p'}}(\log \Lambda)^{-\beta\gamma}). 
 \end{align}
 The right hand side of the above estimate is bounded because $-\alpha\beta+d-\frac{d \beta}{p'}= \frac{dp-\beta(d p+\alpha p-d)}{p} <0$ by using the assumption that $\beta> \frac{dp}{dp+\alpha p-d},$ which is equivalent to $dp-\beta (dp+\alpha p-d)<0.$ 
 
 Therefore, $$\int_1^{\Lambda} F(\lambda)^\beta |c(\lambda)|^{-2} d\lambda =O(1)$$ provided that $d-\alpha \beta -d \beta +\frac{d \beta}{p} <0$. Hence $F \in L^\beta((0, \infty), |c(\lambda)|^{-2} d\lambda).$ The proof is complete as conditions on $\beta$ are equivalent to \eqref{beta2}.
    \end{proof}

	\section{$L^p-L^q$ boundedness of multipliers for Jacobi transform for $1<p\leq 2\leq q<\infty$ } \label{Jacobinota}
	The purpose of this section is to extend a classical result of H\"ormander \cite{Hormander} on $L^p$-$L^q$ boundedness of Fourier multipliers with the use of inequalities of Paley type \cite{HP-1, HP} and Hausdorff-Young-Paley type for the Jacobi transform \cite{FK, Koorn}. We have seen earlier in Section \ref{Ess}, initially noted in Anker et al. \cite{Anker96}, that the spherical (radial) analysis on harmonic $NA$ groups perfectly fits into the  Jacobi analysis setting  studied by Koornwinder and his coauthors \cite{Koorn, FK, FK2}. So, instead of working in the setting of radial analysis on harmonic $NA$ we choose to work in the broad setting of Jacobi analysis. In  this section we  prove the Hausdorff-Young-Paley inequality and apply it to prove $L^p-L^q$ boundedness of Fourier multipliers in this setting. We first briefly describe the harmonic analysis related to the Jacobi differential operators can be found in papers \cite{Koorn, FK, FK2}.
	
	The hypergeomertic function $_2F_1(a, b; c, z)$ is defined by 
	$$_2F_1(a, b; c, z)= \sum_{k=0}^\infty \frac{(a)_k (b)_k}{(c)_k k!}z^k,\,\,\,\,\,\,|z|<1,$$ where $(a)_k$ is the Pochhammer symbol of $a$ given by $(a)_0=1$ and $(a)_k= a(a+1)\ldots(a+k-1).$ The function $z \mapsto\,\, _2F_1(a, b; c, z)$ is the unique solution of the differential equation
	$$z(1-z)u''(z)+(c-(a+b+1)z)u'(z)-ab u(z)=0,$$ which is regular at $0$ and equals to $1$ there. The Jacobi function with parameters $(\alpha, \beta)$ and $\lambda \in \mathbb{C}$ is defined by $\phi_\lambda^{\alpha, \beta}(t)= _2F_1(\frac{\alpha+\beta+1-i\lambda}{2}, \frac{\alpha+\beta+1+i\lambda}{2}; \alpha+1, -\sinh^2{t} ).$ It is well-known that the family $\{\phi_\lambda^{\alpha, \beta}\}_{\lambda \geq 0}$ forms an orthonormal system in $\mathbb{R}_+$ with respect to the weight $A_{\alpha, \beta}(t)= (2 \sinh{t})^{2\alpha+1} (2 \cosh{t})^{2\beta+1},\,\,t>0$ if $|\beta|<\alpha+1.$ In this section we assume that $\alpha \neq -1, -2, \ldots$ and $\alpha \geq \beta > \frac{-1}{2}.$ At times, for convenience we omit superscript and subscript $(\alpha, \beta)$ from $\phi_\lambda^{\alpha, \beta}$  and $A_{\alpha, \beta}(t)$ respectively and prefer to write them as $\phi_\lambda$ and $A(t)$ alone. It is worth noting that the following behavior for the weight $A_{\alpha, \beta}(t)$ holds true: 
	$$ A_{\alpha, \beta}(t) \asymp  \begin{cases} t^{2\alpha+1}\,\,\,& 0<t<1, \\ e^{2 \rho t} \,\,\,& t \geq 1.  \end{cases}$$
	Also, we denote $\alpha+\beta+1$ by $\rho.$ The Jacobi Laplacian is given by 
	$$\mathcal{L}_{\alpha, \beta}= \frac{d^2}{dt^2}+ ((2\alpha+1)\coth{t}+(2\beta+1)\tanh{t}) \frac{d}{dt},$$ where $\tanh{t}:=\frac{e^t-e^{-t}}{e^t+e^{-t}}$ and $\coth{t}:=\frac{1}{\tanh{t}}.$ The Jacobi functions can be alternatively characterized as the unique solutions of $$\mathcal{L}_{\alpha, \beta} \phi_\lambda^{\alpha, \beta}+(\lambda^2+\rho^2) \phi_\lambda^{\alpha, \beta}=0$$ on $\mathbb{R}_+$ such that $\phi_\lambda^{\alpha, \beta}(0)=1$ and $(\phi_\lambda^{\alpha, \beta})'(0)=0.$ Therefore, the function $t \mapsto \phi_\lambda^{\alpha, \beta}(t)$ is analytic for $t \geq 0.$ The function $\phi_\lambda^{\alpha, \beta}$ can be  estimated as 
	$$| \phi_\lambda^{\alpha, \beta}(t)| \leq \begin{cases}  1\, & \textnormal{if}\,\,\,\, |\textnormal{Im}(\lambda)| \leq \rho \\ e^{(\textnormal{Im}(\lambda)-\rho)t} & \textnormal{if}\,\,\, |\textnormal{Im}(\lambda)| > \rho 
	\end{cases}$$
	for $t \in \mathbb{R}_+.$ It is also known that $\phi_\lambda^{\alpha, \beta}$ is bounded if and only if $|\textnormal{Im}(\lambda)| \leq \rho.$
	
	Denote the space of all even smooth functions on $\mathbb{R}$ with compact support  by $C_c^\infty(\mathbb{R})^\#$ and the space consisting of restriction of functions in $C_c^\infty(\mathbb{R})^\#$ to $\mathbb{R}_+$ by $C_c^\infty(\mathbb{R}_+)^\#.$
	For a  function $f \in C_c^\infty(\mathbb{R})^\#,$  the Jacobi transform $\widehat{f}(\lambda),$ $\lambda \in \mathbb{C},$ is defined by 
	$$\widehat{f}(\lambda):= \int_{0}^\infty f(t) \phi_\lambda(t) A(t) dt.$$
	
	The following inversion formula holds for functions in $C_c^\infty(\mathbb{R})^\#:$
	$$f(t)= \frac{1}{2\pi} \int_0^\infty \widehat{f}(\lambda)\,\phi_\lambda(t) |c(\lambda)|^{-2}d\lambda,\,\,\,\,\,t \in \mathbb{R},  $$ where $c(\lambda)$ is a multiple of the  meromorphic Harish-Chandra function given by the formula
	$$c(\lambda)=\frac{2^{\rho-i\lambda} \Gamma(\alpha+1) \Gamma (i\lambda)}{ \Gamma(\frac{1}{2}(\rho+i\lambda)) \Gamma(\frac{1}{2}(\rho+i \lambda)-\beta)}.$$
	If we denote the measure $A(t)dt$ by $d\mu(t)$ then the space $L^p(\mathbb{R}_+, d\mu)$, $1\leq p<\infty,$ associated with the measure $d\mu,$ at times,  denoted by $L^p(\mu),$ is defined as the space of all even functions on $\mathbb{R}$ such that
	$$\|f\|_{L^p(\mu)}:=\left( \int_0^\infty |f(t)|^p\, A(t)dt \right)^{\frac{1}{p}}=\left( \int_0^\infty |f(t)|^p\, d\mu(t) \right)^{\frac{1}{p}}<\infty,\,\,\,\,1\leq p <\infty.$$
	It is well-known that the Jacobi transform $f \mapsto \widehat{f}$ extends to an isometry from $L^2(\mu)$ onto $L^2(\mathbb{R}_+, \frac{1}{2\pi} |c(\lambda)|^{-2}d\lambda).$ 
	
	Let $S_p:=\{\lambda \in \mathbb{C}: |\textnormal{Im}(\lambda)|<(1-\frac{2}{p}) \rho \}.$ Then, for $f \in L^p(\mu),$ $1\leq p <2$ with $\frac{1}{p}+\frac{1}{p'}=1,$ the Jacobi transform $\widehat{f}(\lambda)$ is well-defined and holomorphic in $S_p$ and for all $\lambda \in S_p,$ we have
	$$|\widehat{f}(\lambda)| \leq \|f\|_{L^p(\mu)} \|\phi_\lambda\|_{p}.$$ In particular, if $p=1,$ $\widehat{f}(\lambda)$ is continuous on $S_1$ and for all $\lambda \in S_1,$ we have $$|\widehat{f}(\lambda)| \leq \|f\|_{L^1(\mu)}.$$ 
	
	For our convenience we denote the measure $\frac{1}{2\pi} |c(\lambda)|^{-2}d\lambda$ by $d\kappa(\lambda).$ 
	
	The following theorem is the Hausdorff-Young inequality for Jacobi transform (see \cite{Sinayoung}).
	\begin{thm} \label{HYJacobi}
	Let $1 \leq p \leq 2$ with $\frac{1}{p}+\frac{1}{p'}=1.$ If $f\in L^p(\mathbb{R}_+, d\mu)$ then we have 
	$$\|\widehat{f}\|_{L^{p'}(\mathbb{R}_+, d\kappa(\lambda))} \leq \|f\|_{L^p(\mu)}.$$
	\end{thm}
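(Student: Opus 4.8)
The plan is to obtain the inequality by Riesz--Thorin interpolation between the two endpoints $p=1$ and $p=2$, exactly as in the classical Euclidean Hausdorff--Young theorem. The Jacobi transform $\mathcal{J}f:=\widehat{f}$ is a linear operator, and I would first verify that it is bounded at both endpoints with norm at most $1$, viewing it throughout as a map from the fixed source space $(\mathbb{R}_+,d\mu)$ to the fixed target space $(\mathbb{R}_+,d\kappa)$, both of which are $\sigma$-finite.

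For the endpoint $p=1$ I restrict attention to real $\lambda\in\mathbb{R}_+$. Since $\mathrm{Im}(\lambda)=0\leq\rho$, the estimate for $\phi_\lambda^{\alpha,\beta}$ recalled above gives $|\phi_\lambda(t)|\leq 1$ for all $t\in\mathbb{R}_+$, whence
\[
|\widehat{f}(\lambda)|\leq\int_0^\infty |f(t)|\,|\phi_\lambda(t)|\,A(t)\,dt\leq\int_0^\infty |f(t)|\,A(t)\,dt=\|f\|_{L^1(\mu)}.
\]
Thus $\mathcal{J}\colon L^1(\mu)\to L^\infty(\mathbb{R}_+,d\kappa)$ with norm $\leq 1$; here the $L^\infty$ norm is insensitive to the choice of $d\kappa$ versus Lebesgue measure because $|c(\lambda)|^{-2}$ is positive and finite on $\mathbb{R}_+$, so it coincides with the pointwise supremum just estimated. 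For the endpoint $p=2$, the Plancherel identity recalled above asserts that $\mathcal{J}$ extends to an isometry of $L^2(\mu)$ onto $L^2(\mathbb{R}_+,d\kappa)$, so $\|\widehat{f}\|_{L^2(\kappa)}=\|f\|_{L^2(\mu)}$ and the norm equals $1$.

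With both endpoints established, I apply the Riesz--Thorin interpolation theorem to the linear map $\mathcal{J}$, initially defined on the dense subspace $C_c^\infty(\mathbb{R}_+)^\#\subset L^1(\mu)\cap L^2(\mu)$. Taking the interpolation parameter $\theta=2/p'\in[0,1]$ yields $\frac1p=(1-\theta)+\frac{\theta}{2}$ and $\frac{1}{p'}=\frac{\theta}{2}$, which is consistent with the conjugate relation $\frac1p+\frac1{p'}=1$, and the interpolated operator norm is bounded by $1^{1-\theta}\cdot 1^{\theta}=1$. Extending by density to all of $L^p(\mu)$ then gives $\|\widehat{f}\|_{L^{p'}(\kappa)}\leq\|f\|_{L^p(\mu)}$, which is the claimed inequality.

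I do not anticipate a genuine obstacle, as the argument is a direct transcription of the classical interpolation scheme; the only points that require care are bookkeeping rather than conceptual. Specifically, one must keep the two $L^p$ scales attached to the correct measures $d\mu$ (in $t$) and $d\kappa$ (in $\lambda$) so that Riesz--Thorin applies between two fixed $\sigma$-finite measure spaces, and one must justify the final density extension from $C_c^\infty(\mathbb{R}_+)^\#$ to $L^p(\mu)$ after the interpolated bound has been obtained.
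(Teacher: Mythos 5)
Your proof is correct. The paper does not actually prove Theorem \ref{HYJacobi} in the text; it imports it from the literature on commutative hypergroups (the reference \cite{Sinayoung}), and the standard proof of that cited result is precisely your argument: Riesz--Thorin interpolation between the endpoint bound $|\widehat{f}(\lambda)|\leq\|f\|_{L^1(\mu)}$ (from $|\phi_\lambda(t)|\leq 1$ for real $\lambda$, since $|\mathrm{Im}(\lambda)|=0\leq\rho$) and the $L^2(\mu)\to L^2(\mathbb{R}_+,d\kappa)$ Plancherel isometry, so your proposal is essentially the expected proof, with the endpoint facts and the normalization of $d\kappa$ correctly taken from the paper's setup.
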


	\subsection{Paley-type inequality for Jacobi transform} The following inequality can be regarded as a Paley-type inequality which  plays an important in proving the Hausdorff-Young-Paley inequality.

	\begin{thm}\label{Paley}
	      Suppose that $\psi$ is a positive function  on $\mathbb{R}_+$  satisfying the condition 
	    \begin{equation}
	        M_\psi := \sup_{t>0} t \int_{\underset{\psi(\lambda)>t}{\lambda \in \mathbb{R}_+}}  |c(\lambda)|^{-2}d\lambda <\infty.
	        \end{equation}
	        Then for  $f \in L^p(\mu),$ $1<p\leq 2,$ we have 
	        \begin{align} \label{Paleyin}
	            \left( \int_0^\infty |\widehat{f}(\lambda)|^p\, \psi(\lambda)^{2-p}  |c(\lambda)|^{-2}d\lambda \right)^{\frac{1}{p}} \lesssim M_{\psi}^{\frac{2-p}{p}}\, \|f\|_{L^p(\mu)}.
	        \end{align}
	\end{thm}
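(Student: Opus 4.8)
The statement is a Paley-type inequality for the Jacobi transform, and the natural route is to mimic the classical proof via interpolation of operators on Lorentz spaces (Marcinkiewicz-type interpolation), exactly as one proves the Euclidean Paley inequality. The key observation is that the measure $d\nu(\lambda) := \psi(\lambda)^{2}\,|c(\lambda)|^{-2}\,d\lambda$ on $\mathbb{R}_+$ is, by the hypothesis $M_\psi<\infty$, a measure with controlled level-set growth, and the inequality to be proved is precisely the statement that the sublinear operator $Tf(\lambda) := \widehat{f}(\lambda)\,\psi(\lambda)^{-1}$ maps $L^p(\mu)$ boundedly into $L^p(\mathbb{R}_+, d\nu)$. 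Indeed, rewriting the left-hand side of \eqref{Paleyin} as $\bigl(\int_0^\infty |\widehat{f}(\lambda)\psi(\lambda)^{-1}|^p\,\psi(\lambda)^2\,|c(\lambda)|^{-2}\,d\lambda\bigr)^{1/p} = \|Tf\|_{L^p(d\nu)}$ makes this transparent.

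\textbf{Endpoint bounds.} The strategy is to establish the two endpoint estimates $p=1$ and $p=2$ and interpolate. For $p=2$ there is nothing to prove beyond the Plancherel isometry: the weight $\psi(\lambda)^{2-p}$ reduces to $1$, and the Jacobi transform is an isometry from $L^2(\mu)$ onto $L^2(\mathbb{R}_+, d\kappa)$. For the $p=1$ endpoint I would establish the weak-type bound $T:L^1(\mu)\to L^{1,\infty}(\mathbb{R}_+, d\nu)$. Here one uses the elementary pointwise estimate $|\widehat{f}(\lambda)|\le \|f\|_{L^1(\mu)}$ valid for real $\lambda$ (this is the $p=1$ case recorded in the excerpt, since $\phi_\lambda$ is bounded by $1$ for real $\lambda$). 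Consequently, for the distribution function of $Tf$ with respect to $d\nu$, we have
\begin{equation*}
\nu\bigl(\{\lambda: |\widehat{f}(\lambda)|\psi(\lambda)^{-1} > s\}\bigr) \le \int_{\{\psi(\lambda) < \|f\|_{L^1(\mu)}/s\}} \psi(\lambda)^2\,|c(\lambda)|^{-2}\,d\lambda.
\end{equation*}
The hypothesis $M_\psi<\infty$ says exactly that $\int_{\{\psi<w\}}\psi^2\,|c|^{-2}\,d\lambda \lesssim M_\psi\, w$ (one checks this by the layer-cake / distributional representation of $\int\psi^2\,|c|^{-2}$ over the level sets), which yields $\nu(\{Tf>s\}) \lesssim M_\psi\,\|f\|_{L^1(\mu)}\,s^{-1}$, i.e.\ the weak $(1,1)$ bound with operator norm $\lesssim M_\psi$.

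\textbf{Interpolation and bookkeeping of constants.} Having the strong $(2,2)$ bound (norm $\lesssim M_\psi^{0}$) and the weak $(1,1)$ bound (norm $\lesssim M_\psi$) with respect to the fixed target measure $d\nu$, I would apply the Marcinkiewicz interpolation theorem to the sublinear operator $T$ to obtain $T:L^p(\mu)\to L^p(d\nu)$ for $1<p<2$, with operator norm bounded by the geometric interpolation of the endpoint norms. Choosing the interpolation parameter $\theta$ by $\frac{1}{p}=\frac{1-\theta}{1}+\frac{\theta}{2}$, i.e.\ $\theta = 2(1-\frac1p) = \frac{2}{p'}$ so that $1-\theta = \frac{2-p}{p}\cdot\frac{p}{?}$—more directly $1-\theta = \frac{2}{p}-1 = \frac{2-p}{p}$—produces the precise constant $M_\psi^{1-\theta} = M_\psi^{(2-p)/p}$ appearing on the right-hand side of \eqref{Paleyin}. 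This is exactly the exponent in the statement, so the interpolation not only gives boundedness but pins down the claimed dependence on $M_\psi$.

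\textbf{Main obstacle.} The analytically delicate point is the verification that the level-set hypothesis $M_\psi<\infty$ implies the weak-type control $\int_{\{\psi<w\}}\psi^2\,|c|^{-2}\,d\lambda \lesssim M_\psi\,w$; this requires a careful dyadic decomposition of the level sets of $\psi$ and an Abel-summation argument against the growth of the $c$-function measure, rather than any deep property of Jacobi analysis. Everything else—the endpoint bounds and the interpolation—is structurally identical to the classical Euclidean argument once the correct measure $d\nu = \psi^2\,|c|^{-2}\,d\lambda$ is identified and the sublinear operator $T$ is set up; the only ingredients specific to the Jacobi setting are the Plancherel isometry and the trivial $L^1$ bound $|\widehat{f}(\lambda)|\le\|f\|_{L^1(\mu)}$, both already available in the excerpt.
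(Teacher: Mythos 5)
Your proposal is correct and follows essentially the same route as the paper's proof: the paper also sets up the sublinear operator $Tf(\lambda)=|\widehat{f}(\lambda)|/\psi(\lambda)$ mapping into $L^p$ of the measure $d\nu=\psi^2\,d\kappa$, proves the strong $(2,2)$ bound via Plancherel and the weak $(1,1)$ bound (with constant $M_\psi$) via $|\widehat{f}(\lambda)|\le\|f\|_{L^1(\mu)}$ together with the same layer-cake verification that $\int_{\{\psi\le w\}}\psi^2\,d\kappa\lesssim M_\psi w$, and then applies Marcinkiewicz interpolation with exactly your bookkeeping $1-\theta=\frac{2-p}{p}$. The only cosmetic difference is that your closing paragraph floats a dyadic/Abel-summation alternative for the level-set claim, whereas the Fubini (layer-cake) argument you describe first is precisely what the paper uses and is sufficient.
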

	\begin{proof} We follow the proof in \cite{ARN, AR}. Let us consider a measure $\nu$ on $\mathbb{R}_+$ given by \begin{equation} \label{nu}
	    \nu(\lambda)= \psi(\lambda)^2 d\kappa(\lambda),
	\end{equation}
	where $d\kappa=\frac{1}{2\pi} |c(\lambda)|^{-2}d\lambda.$
	We define the corresponding $L^p(\mathbb{R}_+, \nu)$-space, $1 \leq p<\infty,$ as the space of all complex-valued function $f$ defined by $\mathbb{R}_+$ such that 
	$$ \|f\|_{L^p(\mathbb{R}_+, \nu)}:= \left( \int_{\mathbb{R}_+} |f(\lambda)|^p \, \psi(\lambda)^2 d\kappa(\lambda) \right)^{\frac{1}{p}}<\infty.$$
	 We define a sublinear operator $T$ for $f \in L^p(\mu)$ by 
	$$ Tf(\lambda)= \frac{|\widehat{f}(\lambda)|}{\psi(\lambda)} \in L^p(\mathbb{R}_+, \nu).$$
We will show that $T$ is well-defined and bounded from $L^p(\mu)$ to $L^p(\mathbb{R}_+, \nu)$ for any $1 < p \leq 2.$	
In other words, we claim the following estimate:
\begin{equation} \label{vis10paley}
    \|Tf\|_{L^p(\mathbb{R}_+, \nu)}= \left( \int_{\mathbb{R}_+} \frac{|\widehat{f}(\lambda)|^p}{\psi(\lambda)^p}\,\psi(\lambda)^2 d\kappa(\lambda) \right)^{\frac{1}{p}} \lesssim M_{\psi}^{\frac{2-p}{p}} \|f\|_{L^p(\mu)},
\end{equation}
which will give us the required inequality \eqref{Paleyin} with $M_\psi := \sup_{t>0} \int_{\underset{\psi(\lambda)>t}{\lambda \in \mathbb{R}_+}} d\kappa(\lambda).$ 
	We will show that $T$ is weak-type $(2,2)$ and weak-type $(1,1).$  More precisely, with the distribution function,  
$$\nu(y; Tf)= \int_{\underset{\frac{|\widehat{f}(\lambda)|} {\psi(\lambda)} \geq y}{\lambda \in \mathbb{R}_+}} \psi(\lambda)^2 d\kappa(\lambda), $$ where $\nu$ is give by formula \eqref{nu}, we show that 
\begin{equation} \label{vish5.4}
    \nu(y; Tf) \leq \left( \frac{M_2 \|f\|_{2}}{y} \right)^2 \,\,\,\,\,\,\text{with norm}\,\, M_2=1,
\end{equation}
\begin{equation} \label{vish5.5}
    \nu(y; Tf) \leq \frac{M_1 \|f\|_{1}}{y}\,\,\,\,\,\,\text{with norm}\,\, M_1=M_\psi.
\end{equation} 
	Then the estimate \eqref{vis10paley} follows from the Marcinkiewicz interpolation Theorem. Now, to show \eqref{vish5.4}, using Plancherel identity we get
	\begin{align*}
    y^2 \nu(y; Tf)&\leq \sup_{y>0}y^2 \nu(y; Tf)= \|Tf\|^2_{L^{2, \infty}(\mathbb{R}, \nu)}  \leq \|Tf\|^2_{L^2(\mathbb{R}_+, \nu)} \\&= \int_{\mathbb{R}_+} \left( \frac{|\widehat{f}(\lambda)|}{\psi(\lambda)} \right)^2 \psi(\lambda)^2 d\kappa(\lambda) \\&= \int_{\mathbb{R}_+} |\widehat{f}(\lambda)|^2\, d\kappa(\lambda)  = \|f\|_2^2.  \end{align*}
Thus, $T$ is type $(2,2)$ with norm $M_2 \leq 1.$ Further, we show that $T$ is of weak type $(1,1)$ with norm $M_1=M_\psi$; more precisely, we show that 
\begin{align} \label{11weak}
    \nu \left\{ \lambda \in \mathbb{R}_+: \frac{|\widehat{f}(\lambda)|}{\psi(\lambda)}>y \right\} \lesssim M_\psi \frac{\|f\|_1}{y}.
\end{align}
The left hand side is an integral $ \int \psi(\lambda) d\kappa(\lambda)$ taken over all those $\lambda \in \mathbb{R}_+$ for which $\frac{|\widehat{f}(\lambda)|}{\psi(\lambda)}>y.$
Since $|\widehat{f}(\lambda)| \leq \|f\|_1$ for all $\lambda \in \mathbb{R}_+$ we have
$$\left\{ \lambda \in \mathbb{R}_+: \frac{|\widehat{f}(\lambda)|}{\psi(\lambda)}>y \right\} \subset \left\{ \lambda \in \mathbb{R}_+: \frac{\|f\|_1}{\psi(\lambda)}>y \right\},$$ 
 for any $y>0$ and, therefore,
$$\nu \left\{ \lambda \in \mathbb{R}_+: \frac{|\widehat{f}(\lambda)|}{\psi(\lambda)}>y \right\} \leq \nu \left\{ \lambda \in \mathbb{R}_+: \frac{\|f\|_1}{\psi(\lambda)}>y \right\}.$$

Now by setting $w:=\frac{\|f\|_1}{y},$ we have 
\begin{align}
    \nu \left\{ \lambda \in \mathbb{R}_+: \frac{\|f\|_1}{\psi(\lambda)}>y \right\} \leq  \int_{\overset{\lambda \in \mathbb{R}_+}{\psi(\lambda) \leq w} } \psi(\lambda)^2\, d\kappa(\lambda).
\end{align}
Now we claim that 
\begin{align} \label{claim}
    \int_{\overset{\lambda \in \mathbb{R}_+}{\psi(\lambda) \leq w} } \psi(\lambda)^2\, d\kappa(\lambda) \lesssim M_\psi w.
\end{align}
Indeed, first we notice that 
\begin{align*}
    \int_{\overset{\lambda \in \mathbb{R}_+}{\psi(\lambda) \leq w} } \psi(\lambda)^2\, d\kappa(\lambda) = \int_{\overset{\lambda \in \mathbb{R}_+}{\psi(\lambda) \leq w} } \, d\kappa(\lambda) \int_0^{\psi(\lambda)^2} d\tau. 
\end{align*}
By interchanging the order of integration we get 
\begin{align*}
    \int_{\overset{\lambda \in \mathbb{R}_+}{\psi(\lambda) \leq w} } \,d\kappa(\lambda) \int_0^{\psi(\lambda)^2} d\tau = \int_{0}^{w^2} d\tau \int_{\underset{\tau^{\frac{1}{2}} \leq \psi(\lambda) \leq w}{\lambda \in \mathbb{R}_+}} d\kappa(\lambda).
\end{align*}
Further, by making substitution  $\tau= t^2,$ it gives 
\begin{align*}
    \int_{0}^{w^2} d\tau \int_{\underset{\tau^{\frac{1}{2}} \leq \psi(\lambda) \leq w}{\lambda \in \mathbb{R}_+}} d\kappa(\lambda) &= 2 \int_0^w t\, dt \int_{\underset{t \leq \psi(\lambda) \leq w}{\lambda \in \mathbb{R}_+}} d\kappa(\lambda) \\&\leq 2 \int_0^w t\, dt \int_{\underset{t \leq \psi(\lambda) }{\lambda \in \mathbb{R}_+}} d\kappa(\lambda).
\end{align*}
Since 
$$ t \int_{\underset{t \leq \psi(\lambda) }{\lambda \in \mathbb{R}_+}} d\kappa(\lambda) \leq \sup_{t>0} t \int_{\underset{t \leq \psi(\lambda) }{\lambda \in \mathbb{R}_+}} d\kappa(\lambda) = M_\psi $$ is finite by assumption $M_\psi<\infty,$ we have 
\begin{align*}
    2 \int_0^w t\, dt \int_{\underset{t \leq \psi(\lambda) }{\lambda \in \mathbb{R}_+}} d\kappa(\lambda) \lesssim M_\psi w.
\end{align*}
This establishes our claim \eqref{claim} and eventually proves \eqref{11weak}. So, we have proved \eqref{vish5.4} and \eqref{vish5.5}. Then  by using the Marcinkiewicz interpolation theorem with $p_1=1$ and $p_2=2$ and $\frac{1}{p}= 1-\theta+\frac{\theta}{2}$ we now obtain
$$\left( \int_{\mathbb{R}_+} \left(\frac{|\widehat{f}(\lambda)|}{\psi(\lambda)} \right)^p \psi(\lambda)^2\, d\kappa(\lambda) \right)^{\frac{1}{p}}= \|Tf\|_{L^p(\mathbb{R}_+,\, \nu)} \lesssim M_\psi^{\frac{2-p}{p}} \|f\|_{L^p(\mu)}.$$
This completes the proof of the theorem. \end{proof}

	\subsection{Hausdorff-Young-Paley inequality for Jacobi transform}
	The following theorm \cite{BL} is useful to to obtain one of our crucial result.

\begin{thm} \label{interpolation} Let $d\mu_0(x)= \omega_0(x) d\mu'(x),$ $d\mu_1(x)= \omega_1(x) d\mu'(x),$ and write $L^p(\omega)=L^p(\omega d\mu')$ for the weight $\omega.$ Suppose that $0<p_0, p_1< \infty.$ Then 
$$(L^{p_0}(\omega_0), L^{p_1}(\omega_1))_{\theta, p}=L^p(\omega),$$ where $0<\theta<1, \, \frac{1}{p}= \frac{1-\theta}{p_0}+\frac{\theta}{p_1}$ and $\omega= \omega_0^{\frac{p(1-\theta)}{p_0}} \omega_1^{\frac{p\theta}{p_1}}.$
\end{thm}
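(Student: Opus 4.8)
The plan is to deduce the statement from the classical (unweighted) real interpolation identity for $L^p$-couples by conjugating with a single multiplication operator that is simultaneously isometric at both endpoints; this is the real-method counterpart of the Stein--Weiss interpolation theorem with change of measure. Throughout I assume $p_0\neq p_1$; the degenerate case $p_0=p_1$, in which the asserted weight is the geometric mean $\omega_0^{1-\theta}\omega_1^{\theta}$, has to be handled separately by a direct computation of the $K$-functional, since the multiplier introduced below is not defined there (and it is not needed for the interpolation of the Paley and Hausdorff--Young inequalities, where $p_0\neq p_1$).

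First I would introduce the auxiliary multiplier and reference measure
$$ m:=\left(\frac{\omega_0}{\omega_1}\right)^{\frac{1}{p_0-p_1}},\qquad d\nu:=\omega_0\, m^{-p_0}\, d\mu', $$
both well defined and positive almost everywhere since $\omega_0,\omega_1>0$. A direct check gives $m^{p_0}\,d\nu=\omega_0\,d\mu'$ and $m^{p_1}\,d\nu=\omega_1\,d\mu'$, so the multiplication operator $Tf:=mf$ satisfies $\|Tf\|_{L^{p_i}(\nu)}=\|f\|_{L^{p_i}(\omega_i)}$ for $i=0,1$. Hence $T$ is an isometric isomorphism of Banach couples from $(L^{p_0}(\omega_0),L^{p_1}(\omega_1))$ onto $(L^{p_0}(\nu),L^{p_1}(\nu))$, with inverse the multiplication by $m^{-1}$.

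Next I would invoke the standard unweighted identity $(L^{p_0}(\nu),L^{p_1}(\nu))_{\theta,p}=L^{p,p}(\nu)=L^{p}(\nu)$ with $\tfrac1p=\tfrac{1-\theta}{p_0}+\tfrac{\theta}{p_1}$, and transport it through $T$: since any isomorphism of Banach couples induces an isomorphism of the associated real interpolation spaces (isometrically when the morphism is isometric), one has $f\in(L^{p_0}(\omega_0),L^{p_1}(\omega_1))_{\theta,p}$ if and only if $mf\in L^{p}(\nu)$, with equivalent norms. It remains to identify the resulting weight:
$$ \|mf\|_{L^{p}(\nu)}^{p}=\int |f|^{p}\, m^{p}\, d\nu=\int |f|^{p}\, \omega_0^{\frac{p-p_1}{p_0-p_1}}\,\omega_1^{\frac{p_0-p}{p_0-p_1}}\, d\mu', $$
and a short computation using $\theta=\tfrac{(p_0-p)p_1}{p(p_0-p_1)}$ shows $\tfrac{p-p_1}{p_0-p_1}=\tfrac{p(1-\theta)}{p_0}$ and $\tfrac{p_0-p}{p_0-p_1}=\tfrac{p\theta}{p_1}$, so the weight is exactly $\omega=\omega_0^{p(1-\theta)/p_0}\,\omega_1^{p\theta/p_1}$, as claimed.

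I expect the only genuine difficulty to be the unweighted endpoint identity $(L^{p_0}(\nu),L^{p_1}(\nu))_{\theta,p}=L^{p}(\nu)$, which rests on the pointwise computation of the $K$-functional of an $L^p$-couple (equivalently, the identification of the real interpolation space with the Lorentz space $L^{p,p}$); all the remaining steps are exact bookkeeping. The reduction itself is clean precisely because, although no single power of a weight can rectify both endpoints at once when $p_0\neq p_1$, pairing the multiplier $m$ with the adjusted reference measure $\nu$ absorbs the discrepancy and renders $T$ isometric at both ends simultaneously.
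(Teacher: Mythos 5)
First, a point of reference: the paper itself gives no proof of this statement --- it is imported as a known result from Bergh--L\"ofstr\"om \cite{BL} (real interpolation with change of weights) and is only used through Corollary \ref{interpolationoperator}. So your argument cannot coincide with ``the paper's proof''; the right comparison is with the textbook proof, and yours is genuinely different. In \cite{BL} the weighted identity is proved directly: one computes the $K$-functional exactly for a couple of weighted $L^1$-spaces, $K(t,f;L^1(v_0),L^1(v_1))=\int |f|\min(v_0,tv_1)\,d\mu'$, and then passes to general exponents via the power theorem; that treatment covers all $0<p_0,p_1<\infty$ at once, including $p_0=p_1$. You instead conjugate by the multiplication operator $Tf=mf$, $m=(\omega_0/\omega_1)^{1/(p_0-p_1)}$, against the adjusted reference measure $d\nu=\omega_0 m^{-p_0}d\mu'$. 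I checked the key identities: $m^{p_i}\,d\nu=\omega_i\,d\mu'$ for $i=0,1$, so $T$ is isometric at both endpoints; since $T^{-1}$ (multiplication by $m^{-1}$) is a couple map with the same property, $K$-functionals are preserved exactly, so the interpolation spaces transport as you claim; and the exponent bookkeeping $\frac{p-p_1}{p_0-p_1}=\frac{p(1-\theta)}{p_0}$, $\frac{p_0-p}{p_0-p_1}=\frac{p\theta}{p_1}$ is correct (both sides of each pair sum to $1$, and $\theta=\frac{(p_0-p)p_1}{p(p_0-p_1)}$ gives the second identity). Modulo the classical unweighted Lions--Peetre identity $(L^{p_0}(\nu),L^{p_1}(\nu))_{\theta,p}=L^p(\nu)$, which is a legitimate black box, your proof of the case $p_0\neq p_1$ is complete and more elementary than the textbook's.

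The one caveat is the degenerate case $p_0=p_1$ with distinct weights, which the statement as written does include and which your reduction cannot reach (as you note, $m$ is undefined there). You flag it but do not prove it, so strictly speaking your proposal establishes a weaker theorem; to close it, either restrict the hypothesis to $p_0\neq p_1$ or add the short computation: $K(t,f;L^p(\omega_0),L^p(\omega_1))^p\asymp \int |f|^p\min(\omega_0,t^p\omega_1)\,d\mu'$, and then Fubini together with $\int_0^\infty t^{-\theta p}\min(\omega_0,t^p\omega_1)\,\frac{dt}{t}=c_{\theta,p}\,\omega_0^{1-\theta}\omega_1^{\theta}$ yields exactly $L^p(\omega_0^{1-\theta}\omega_1^{\theta})$, which matches the claimed weight when $p_0=p_1=p$. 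Your parenthetical justification that this case is never needed in the paper is accurate: in the proof of the Hausdorff--Young--Paley inequality the endpoint exponents are $p$ and $p'$, which coincide only when $p=2$, where the inequality collapses to the Plancherel identity.
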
 

The following corollary is immediate.

\begin{cor}\label{interpolationoperator} Let $d\mu_0(x)= \omega_0(x) d\mu'(x),$ $d\mu_1(x)= \omega_1(x) d\mu'(x).$ Suppose that $0<p_0, p_1< \infty.$  If a continuous linear operator $A$ admits bounded extensions, $A: L^p(Y,\mu)\rightarrow L^{p_0}(\omega_0) $ and $A: L^p(Y,\mu)\rightarrow L^{p_1}(\omega_1) ,$   then, we there exists a bounded extension $A: L^p(Y,\mu)\rightarrow L^{b}(\omega) $ of $A$, where  $0<\theta<1, \, \frac{1}{b}= \frac{1-\theta}{p_0}+\frac{\theta}{p_1}$ and 
 $\omega= \omega_0^{\frac{b(1-\theta)}{p_0}} \omega_1^{\frac{b\theta}{p_1}}.$
\end{cor}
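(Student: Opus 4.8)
The plan is to obtain Corollary \ref{interpolationoperator} as a direct consequence of the interpolation identity of Theorem \ref{interpolation} combined with the standard functorial property of the real interpolation method. First I would regard the two target spaces $L^{p_0}(\omega_0)$ and $L^{p_1}(\omega_1)$ as the endpoints of the interpolation couple $\bigl(L^{p_0}(\omega_0), L^{p_1}(\omega_1)\bigr)$, and the single fixed domain $L^p(Y,\mu)$ as the diagonal couple $\bigl(L^p(Y,\mu), L^p(Y,\mu)\bigr)$. Since $A$ is assumed to extend boundedly from $L^p(Y,\mu)$ into each endpoint, it is a consistently defined operator on these couples, and this is precisely the setup in which the real method applies.

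The key step is then to invoke the interpolation property of the functor $(\,\cdot\,,\,\cdot\,)_{\theta,b}$: from the two bounded extensions $A:L^p(Y,\mu)\to L^{p_0}(\omega_0)$ and $A:L^p(Y,\mu)\to L^{p_1}(\omega_1)$ one gets a bounded operator
$$
A:\bigl(L^p(Y,\mu), L^p(Y,\mu)\bigr)_{\theta,b}\to \bigl(L^{p_0}(\omega_0), L^{p_1}(\omega_1)\bigr)_{\theta,b},
$$
with operator norm controlled by $\|A\|_{L^p\to L^{p_0}(\omega_0)}^{1-\theta}\,\|A\|_{L^p\to L^{p_1}(\omega_1)}^{\theta}$. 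It remains to identify the two interpolation spaces appearing here. On the domain side, the diagonal couple satisfies $\bigl(L^p(Y,\mu), L^p(Y,\mu)\bigr)_{\theta,b}=L^p(Y,\mu)$ with equivalent norms, which follows from the elementary computation $K(t,g)=\min(1,t)\,\|g\|_{L^p(\mu)}$ for the $K$-functional. On the target side, Theorem \ref{interpolation} applied with exponents $p_0,p_1$ and parameter $\theta$ yields $\bigl(L^{p_0}(\omega_0), L^{p_1}(\omega_1)\bigr)_{\theta,b}=L^b(\omega)$, where $\frac1b=\frac{1-\theta}{p_0}+\frac{\theta}{p_1}$ and $\omega=\omega_0^{b(1-\theta)/p_0}\,\omega_1^{b\theta/p_1}$, exactly as stated. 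Substituting both identifications gives the desired bounded extension $A:L^p(Y,\mu)\to L^b(\omega)$.

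I do not expect a genuine obstacle here, since the statement is flagged as \emph{immediate}; the only point requiring care is the bookkeeping of the couples, namely the verification that the diagonal interpolation space coincides with $L^p(Y,\mu)$ and that $A$ is a consistent operator on the two couples so that the abstract functor may be applied. Both are routine once the endpoints are set up as above, and the rest is a transcription of the exponent and weight formulas from Theorem \ref{interpolation}.
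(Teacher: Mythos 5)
Your proposal is correct and is precisely the argument the paper leaves implicit when it declares the corollary ``immediate'' after Theorem \ref{interpolation}: one applies the exactness (functorial) property of the real interpolation method to the couples $\bigl(L^p(Y,\mu),L^p(Y,\mu)\bigr)$ and $\bigl(L^{p_0}(\omega_0),L^{p_1}(\omega_1)\bigr)$, identifies the diagonal interpolation space with $L^p(Y,\mu)$ via the $K$-functional computation, and identifies the target space as $L^b(\omega)$ by Theorem \ref{interpolation}. No gaps; the consistency of the two extensions and the diagonal identification $\bigl(X,X\bigr)_{\theta,b}=X$ are exactly the routine points to check, and you handled both.
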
 

Using the above corollary we now present the Hausdorff-Young-Paley inequality. 
\begin{thm}[Hausdorff-Young-Paley inequality] \label{HYP} Let $1<p\leq 2,$ and let   $1<p \leq b \leq p' < \infty,$ where $p'= \frac{p}{p-1}.$ If $\psi(\lambda)$ is a positive function on $\mathbb{R}_+$ such that 
 \begin{equation}
	        M_\psi := \sup_{t>0} t \int_{\underset{\psi(\lambda)>t}{\lambda \in \mathbb{R}_+}} d\kappa(\lambda)
	        \end{equation}
is finite then for every $f \in L^p(\mu)$ 
 we have
\begin{equation} \label{Vish5.9}
    \left( \int_{\mathbb{R}_+}  \left( |\widehat{f}(\lambda)| \psi(\lambda)^{\frac{1}{b}-\frac{1}{p'}} \right)^b d\kappa(\lambda)  \right)^{\frac{1}{b}} \lesssim M_\varphi^{\frac{1}{b}-\frac{1}{p'}} \|f\|_{L^p(\mu)}.
\end{equation}
\end{thm}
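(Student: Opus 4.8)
The plan is to deduce \eqref{Vish5.9} by interpolating the Hausdorff--Young inequality (Theorem \ref{HYJacobi}) against the Paley-type inequality (Theorem \ref{Paley}), mirroring the classical derivation recalled in \eqref{3}. I view the Jacobi transform as a single (sub)linear operator $Af:=\widehat{f}$ with fixed source space $L^p(\mu)$, and I record its two endpoint mapping properties into weighted target spaces built over the common measure $d\mu':=d\kappa$. First, Theorem \ref{HYJacobi} gives a bounded map
$$A\colon L^p(\mu)\longrightarrow L^{p'}(\omega_1),\qquad \omega_1\equiv 1,$$
with norm $\le 1$. Second, rewriting the Paley-type estimate \eqref{Paleyin} in terms of $d\kappa$ gives a bounded map
$$A\colon L^p(\mu)\longrightarrow L^{p}(\omega_0),\qquad \omega_0:=\psi^{2-p},$$
with norm $\lesssim M_\psi^{(2-p)/p}$, both weights being taken relative to $d\kappa$.

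Next I would apply Corollary \ref{interpolationoperator} with $p_0=p$, $p_1=p'$, $\omega_0=\psi^{2-p}$ and $\omega_1\equiv 1$. For a given $b\in[p,p']$ I choose $\theta\in[0,1]$ by
$$\frac{1}{b}=\frac{1-\theta}{p}+\frac{\theta}{p'},\qquad\text{equivalently}\qquad \theta=\frac{b-p}{b(2-p)}.$$
The corollary then yields a bounded extension $A\colon L^p(\mu)\to L^{b}(\omega)$ with interpolated weight $\omega=\omega_0^{\,b(1-\theta)/p}\,\omega_1^{\,b\theta/p'}=\psi^{(2-p)b(1-\theta)/p}$. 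The one genuine computation is the simplification of this exponent: substituting the value of $\theta$ gives
$$\frac{(2-p)b(1-\theta)}{p}=\frac{b}{p}-b+1=1-\frac{b}{p'}=b\Bigl(\frac{1}{b}-\frac{1}{p'}\Bigr),$$
so that $\omega=\psi^{\,b(1/b-1/p')}$ and therefore
$$\int_{\mathbb{R}_+}|\widehat{f}(\lambda)|^{b}\,\omega\,d\kappa(\lambda)=\int_{\mathbb{R}_+}\Bigl(|\widehat{f}(\lambda)|\,\psi(\lambda)^{\frac{1}{b}-\frac{1}{p'}}\Bigr)^{b}\,d\kappa(\lambda),$$
which is exactly the left-hand side of \eqref{Vish5.9}.

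It then remains only to track the constant. Real interpolation bounds the norm of the extended operator by $M_0^{1-\theta}M_1^{\theta}$, where $M_0\lesssim M_\psi^{(2-p)/p}$ is the Paley norm and $M_1\le 1$ is the Hausdorff--Young norm; hence the norm is $\lesssim M_\psi^{(2-p)(1-\theta)/p}$. A second elementary computation with the same $\theta$ gives $(2-p)(1-\theta)/p=\tfrac{1}{b}-\tfrac{1}{p'}$, so the constant is $\lesssim M_\psi^{1/b-1/p'}$, matching the right-hand side of \eqref{Vish5.9} (the symbol $M_\varphi$ there being a misprint for $M_\psi$). I do not anticipate any real obstacle in this argument: all the analytic content is already supplied by the two endpoint theorems and by Corollary \ref{interpolationoperator}, and the only care needed is the bookkeeping of the two exponents. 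As a consistency check I would verify the endpoints $b=p$ and $b=p'$, which force $\theta=0$ and $\theta=1$ and recover the Paley-type and Hausdorff--Young inequalities respectively.
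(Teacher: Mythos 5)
Your proposal is correct and follows essentially the same route as the paper's own proof: both interpolate the Paley-type inequality (Theorem \ref{Paley}) with the Hausdorff--Young inequality (Theorem \ref{HYJacobi}) via the weighted real interpolation result of Corollary \ref{interpolationoperator}, with the same choice $p_0=p$, $p_1=p'$, $\omega_0=\psi^{2-p}$, $\omega_1\equiv 1$, the same $\theta=\frac{b-p}{b(2-p)}$, and the same resulting weight $\psi^{1-b/p'}$. Your explicit tracking of the interpolated constant $M_\psi^{1/b-1/p'}$ (and the observation that $M_\varphi$ is a misprint for $M_\psi$) is a small but welcome addition that the paper leaves implicit.
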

This naturally reduced to Hausdorff-Young inequality when $b=p'$ and Paley inequality \eqref{Paleyin} when $b=p.$
\begin{proof}
From Theorem \ref{Paley}, the operator  defined by 
$$Af(\lambda)= \widehat{f}(\lambda),\,\,\,\,\lambda \in \mathbb{R}_+$$
is bounded from $L^p(\mu)$ to $L^{p}(\mathbb{R_+},\omega_0  d\mu'),$ where $d\mu'(\lambda)=d\kappa(\lambda)$ and $\omega_{0}(\lambda)=  \psi(\lambda)^{2-p}.$ From Theorem \ref{HYJacobi}, we deduce that $A:L^p(\mu) \rightarrow L^{p'}(\mathbb{R}_+, \omega_1 d\mu')$ with $d\mu'(\lambda)=d\kappa(\lambda)$ and   $\omega_1(\lambda)= 1$  admits a bounded extension. By using the real interpolation (Corollary \ref{interpolationoperator} above) we will prove that $A:L^p(\mu) \rightarrow L^{b}(\mathbb{R}_+, \omega d\mu'),$ $p\leq b\leq p',$ is bounded,
where the space $L^p(\mathbb{R}_+,\, \omega d\mu')$ is defined by the norm 
$$\|\sigma\|_{L^p(\mathbb{R},\, \omega d\mu')}:=\left( \int_{ \mathbb{R}_+} |\sigma(\lambda)|^p w(\lambda) \,d\mu'(\lambda) \right)^{\frac{1}{p}}= \left( \int_{ \mathbb{R}_+} |\sigma(\lambda)|^p w(\lambda) d\kappa(\lambda) \right)^{\frac{1}{p}}$$
 and $\omega(\lambda)$ is positive function over $\mathbb{R}_+$ to be determined. To compute $\omega,$ we can use Corollary \ref{interpolationoperator}, by fixing $\theta\in (0,1)$ such that $\frac{1}{b}=\frac{1-\theta}{p}+\frac{\theta}{p'}$. In this case $\theta=\frac{p-b}{b(p-2)},$ and 
 \begin{equation}
     \omega= \omega_0^{\frac{p(1-\theta)}{p_0}} \omega_1^{\frac{p\theta}{p_1}}= \psi(\lambda)^{1-\frac{b}{p'}}.     
 \end{equation}
 Thus we finish the proof.
\end{proof}
The following theorem is the main result of this section which is an analogue of H\"ormander multiplier theorem  \cite{Hormander1960} for Jacobi transform.

\begin{thm} \label{Jacobimult}  Let $1<p \leq 2 \leq q<\infty$. Suppose that $T$ is a Jacobi-Fourier multiplier with symbol $h,$ that is, $$\widehat{Tf}(\lambda)= h(\lambda) \widehat{f}(\lambda),\,\,\,\lambda \in \mathbb{R}_+ ,$$
 where $h$ is an bounded measurable even function on $\mathbb{R}.$  Then we have 
$$\|T\|_{L^p(\mu) \rightarrow L^q(\mu)}\lesssim \sup_{s>0} s \left[ \int_{\{ \lambda \in \mathbb{R}_+: |h(\lambda)|>s\}} d\kappa(\lambda) \right]^{\frac{1}{p}-\frac{1}{q}}.$$
   \end{thm}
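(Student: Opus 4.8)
The plan is to follow the scheme of Akylzhanov--Ruzhansky \cite{AR}: derive the $L^p$--$L^q$ bound from the Hausdorff--Young--Paley inequality (Theorem \ref{HYP}) together with an inverse Hausdorff--Young estimate, and reduce the full range $1<p\le 2\le q<\infty$ to a convenient subrange by duality. First I would record the inverse Hausdorff--Young inequality: since $q\ge 2$ we have $1\le q'\le 2$, so Parseval's identity for the Jacobi transform together with Theorem \ref{HYJacobi} applied to the exponent $q'$ gives, for $f,g\in C_c^\infty(\mathbb{R})^\#$,
$$\|Tf\|_{L^q(\mu)}=\sup_{\|g\|_{L^{q'}(\mu)}\le 1}\Big|\int_0^\infty \widehat{Tf}(\lambda)\,\overline{\widehat{g}(\lambda)}\,d\kappa(\lambda)\Big|\le \|\widehat{Tf}\|_{L^{q'}(d\kappa)}=\|h\,\widehat{f}\|_{L^{q'}(d\kappa)},$$
where I used $\widehat{Tf}=h\widehat f$ and $\|\widehat g\|_{L^q(d\kappa)}\le\|g\|_{L^{q'}(\mu)}$. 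Thus the problem reduces to estimating $\|h\widehat f\|_{L^{q'}(d\kappa)}$.

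Next, assume for the moment that $\tfrac1p+\tfrac1q\ge 1$, i.e. $p\le q'$; together with $q'\le p'$ (which always holds since $q\ge p$) this places $b:=q'$ in the admissible range $p\le b\le p'$ of Theorem \ref{HYP}. I would then choose the weight $\psi:=|h|^{r}$ with $\tfrac1r=\tfrac1p-\tfrac1q$, so that, since $\tfrac1{q'}-\tfrac1{p'}=\tfrac1p-\tfrac1q=\tfrac1r>0$, one has $\psi^{\frac1b-\frac1{p'}}=|h|$. Applying Theorem \ref{HYP} with this $\psi$ yields
$$\|h\widehat f\|_{L^{q'}(d\kappa)}=\Big(\int_0^\infty\big(|\widehat f(\lambda)|\,\psi(\lambda)^{\frac1b-\frac1{p'}}\big)^{b}\,d\kappa(\lambda)\Big)^{1/b}\lesssim M_\psi^{\frac1b-\frac1{p'}}\|f\|_{L^p(\mu)}.$$
It remains to match $M_\psi^{1/r}$ to the asserted quantity: substituting $s=t^{1/r}$ in $M_\psi=\sup_{t>0}t\int_{\{|h(\lambda)|^{r}>t\}}d\kappa(\lambda)$ gives $M_\psi=\big(\sup_{s>0}s(\int_{\{|h(\lambda)|>s\}}d\kappa(\lambda))^{1/r}\big)^{r}$, whence $M_\psi^{\frac1{q'}-\frac1{p'}}=M_\psi^{1/r}=\sup_{s>0}s\,(\int_{\{|h(\lambda)|>s\}}d\kappa(\lambda))^{\frac1p-\frac1q}$, exactly the right-hand side of the theorem.

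To cover the complementary range $\tfrac1p+\tfrac1q<1$ (where $q>p'$, so $b=q'<p$ falls outside the range of Theorem \ref{HYP}) I would argue by duality. Parseval shows that the adjoint $T^\ast$ is the Jacobi multiplier with symbol $\overline h$, and $\|T\|_{L^p(\mu)\to L^q(\mu)}=\|T^\ast\|_{L^{q'}(\mu)\to L^{p'}(\mu)}$. Setting $\tilde p=q'$ and $\tilde q=p'$ we have $1<\tilde p\le 2\le\tilde q<\infty$ and $\tfrac1{\tilde p}+\tfrac1{\tilde q}=2-(\tfrac1p+\tfrac1q)>1$, so the case already treated applies to $T^\ast$; since $|\overline h|=|h|$ and $\tfrac1{\tilde p}-\tfrac1{\tilde q}=\tfrac1p-\tfrac1q$, the resulting bound is again the right-hand side of the theorem. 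A standard density argument, using Theorem \ref{HYJacobi} and the inversion formula on $C_c^\infty(\mathbb{R})^\#$, then extends the estimate from $C_c^\infty(\mathbb{R})^\#$ to all of $L^p(\mu)$.

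I expect the main obstacle to be the bookkeeping around the two endpoint estimates feeding Theorem \ref{HYP}: namely, verifying that $b=q'$ genuinely lies in $[p,p']$ (which forces the duality reduction precisely when $\tfrac1p+\tfrac1q<1$) and correctly matching the power $M_\psi^{1/r}$ to the level-set quantity $\sup_{s>0}s(\int_{\{|h(\lambda)|>s\}}d\kappa(\lambda))^{\frac1p-\frac1q}$. The genuine analytic content is already contained in the Paley inequality (Theorem \ref{Paley}) and the Hausdorff--Young inequality (Theorem \ref{HYJacobi}) underlying Theorem \ref{HYP}; the remaining work is essentially exponent arithmetic and the elementary duality identities.
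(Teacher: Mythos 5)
Your proposal is correct and takes essentially the same route as the paper: reduce to $\|h\widehat f\|_{L^{q'}(d\kappa)}$ via the inverse Hausdorff--Young step, apply Theorem \ref{HYP} with $\psi=|h|^{r}$, $b=q'$, $\tfrac1r=\tfrac1p-\tfrac1q$ when $p\le q'$, handle the complementary range by passing to the adjoint, and rescale the level-set quantity to match $M_\psi^{1/r}$ with the stated bound. Your write-up is in fact a bit more careful than the paper's (explicit check that $b=q'\in[p,p']$, the Parseval justification of the inverse Hausdorff--Young inequality, and the closing density argument), but the underlying argument is identical.
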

\begin{proof}
 Let us first assume that $p \leq q',$ where $\frac{1}{q}+\frac{1}{q'}=1.$ Since $q' \leq 2,$ the Hausdorff-Young inequality gives that 
 \begin{align*}
     \|Tf\|_{L^q(\mu)} \leq \|\widehat{Tf}\|_{L^{q'}(\mu)} = \|h \widehat{f}\|_{L^{q'}(\mu)}.
 \end{align*}
 
 The case $q' \leq (p')'=p$ can be reduced to the case $p \leq q'$ as follows. Using the duality of $L^p$-spaces we have $\|T\|_{L^p(\mu) \rightarrow L^q(\mu)}= \|T^*\|_{L^{q'}(\kappa) \rightarrow L^{p'}(\kappa)}.$ The symbol of adjoint operator $T^*$  is equal to $\check{h},$ which equal to  $h$ and obviously we have $|\check{h}|= |h|$ (see Proposition 4.10 in \cite{Anker96}). 
 Now, we are in a position to apply Theorem \ref{HYP}. Set $\frac{1}{p}-\frac{1}{q}=\frac{1}{r}.$ Now, by applying  Theorem \ref{HYP} with $\psi= |h|^r$ with $b=q'$ we get 
 $$\|h \widehat{f}\|_{L^{q'}(\kappa)} \lesssim \left(  \sup_{s>0} s \int_{\underset{|h(\lambda)|^r > s}{\lambda \in \mathbb{R}_+}} d\kappa(\lambda)   \right)^{\frac{1}{r}} \|f\|_{L^p(\mu)} $$ for all $f \in L^p(\mu),$ in view of $\frac{1}{p}-\frac{1}{q}=\frac{1}{q'}-\frac{1}{p'}=\frac{1}{r.}$ Thus, for $1<p \leq 2 \leq q<\infty,$ we obtain 
 
 $$\|Tf\|_{L^q(\mu)} \lesssim \left(  \sup_{s>0} s \int_{\underset{|h(\lambda)|^r > s}{\lambda \in \mathbb{R}_+}} d\kappa(\lambda)   \right)^{\frac{1}{r}} \|f\|_{L^p(\mu)}.$$
 Further, the proof follows from the following inequality: 
 \begin{align*}
     \left(  \sup_{s>0} s \int_{\underset{|h(\lambda)|^r > s}{\lambda \in \mathbb{R}_+}} d\kappa(\lambda)  \right)^{\frac{1}{r}} &= \left(  \sup_{s>0} s \int_{\underset{|h(\lambda)| > s^{\frac{1}{r}}}{\lambda \in \mathbb{R}_+}} d\kappa(\lambda)    \right)^{\frac{1}{r}} \\&=  \left(  \sup_{s>0} s^r \int_{\underset{|h(\lambda)| > s} {\lambda \in \mathbb{R}_+}} d\kappa(\lambda)    \right)^{\frac{1}{r}} \\&= \sup_{s>0} s \left(   \int_{\underset{|h(\lambda)| > s} {\lambda \in \mathbb{R}_+}} d\kappa(\lambda)    \right)^{\frac{1}{r}},
 \end{align*} proving Theorem \ref{Jacobimult}.
\end{proof}

As an immediate consequence of above Theorem \ref{Jacobimult} we have the following multiplier theorem on harmonic $NA$ groups.  

\begin{cor}
 Let $S$ be a harmonic $NA$ group and let $1<p \leq 2 \leq q<\infty.$ Suppose that $h$ is a bounded measurable even function on $\mathbb{R}.$ Then the Fourier multiplier defined by $T_h f:= \mathcal{H}^{-1}(h \mathcal{H}f)$ for $f \in C_c^\infty(S)^\#$ is bounded from $L^p(S)^\#$ to $L^q(S)^\#$ provided that
 $$\sup_{s>0} s \int_{\{ \lambda \in \mathbb{R}_+: |h(\lambda)|^{\frac{1}{p}-\frac{1}{q}}>s\}} |c(\lambda)|^{-2}\, d\lambda <\infty.$$
 Moreover, $$\|T_h\|_{L^p(S)^\# \rightarrow L^q(S)^\#} \lesssim \sup_{s>0} s \left(  \int_{\{ \lambda \in \mathbb{R}_+: |h(\lambda)|>s\}} |c(\lambda)|^{-2}\, d\lambda  \right)^{\frac{1}{p}-\frac{1}{q}}.$$
\end{cor}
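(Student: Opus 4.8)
The plan is to deduce this corollary directly from Theorem~\ref{Jacobimult} by transporting it through the dictionary between the spherical (radial) analysis on $S$ and the Jacobi analysis recalled in Section~\ref{Ess}. First I would make the identification of spaces precise: a radial function $f\in C_c^\infty(S)^\#$ is a function $f=f(t)$ of the geodesic distance $t$, and under the substitution $r=t/2$ the relation $\phi_\lambda(t)=\phi_{2\lambda}^{\alpha,\beta}(t/2)$, together with the expression of $\mathrm{rad}\,\Delta_S$ as $\tfrac14\mathcal{L}_{\alpha,\beta}$, shows that, setting $g(u):=f(2u)$, the map $f\mapsto g$ is (up to a fixed multiplicative constant coming from $dt=2\,du$) an isometry of $L^p(S)^\#$ onto $L^p(\mathbb{R}_+,A_{\alpha,\beta}(u)\,du)=L^p(\mu)$ for every $1\le p<\infty$.

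Next I would check that this identification intertwines the two transforms. Carrying out the same change of variables in the defining integral $\mathcal{H}f(\lambda)=\int_S f\,\phi_\lambda\,dx$ yields $\mathcal{H}f(\lambda)=c_0\,\widehat{g}(2\lambda)$ for a fixed constant $c_0$, where $\widehat{\,\cdot\,}$ denotes the Jacobi transform. Consequently the radial Fourier multiplier $T_hf=\mathcal{H}^{-1}(h\,\mathcal{H}f)$ corresponds, on the Jacobi side, to the Jacobi-Fourier multiplier with symbol $\widetilde h(\mu):=h(\mu/2)$; that is, the Jacobi transform of $g$ gets multiplied by $\widetilde h$. Since $h$ is bounded, measurable and even, so is $\widetilde h$, and Theorem~\ref{Jacobimult} applies to it directly.

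Applying Theorem~\ref{Jacobimult} to $\widetilde h$ gives
\[
\|T_h\|_{L^p(S)^\#\to L^q(S)^\#}\;\lesssim\;\sup_{s>0}s\left[\int_{\{\mu\in\mathbb{R}_+:\,|\widetilde h(\mu)|>s\}}|c(\mu)|^{-2}\,d\mu\right]^{\frac1p-\frac1q}.
\]
The remaining step is bookkeeping: the substitution $\mu=2\lambda$ turns the level set $\{\mu:\,|\widetilde h(\mu)|>s\}$ into the set $\{2\lambda:\,|h(\lambda)|>s\}$, and under the same scaling the Jacobi Plancherel density $|c(\mu)|^{-2}\,d\mu$ matches $|c(\lambda)|^{-2}\,d\lambda$ up to a constant, so the displayed right-hand side is comparable to $\sup_{s>0}s\bigl(\int_{\{|h(\lambda)|>s\}}|c(\lambda)|^{-2}\,d\lambda\bigr)^{\frac1p-\frac1q}$, which is the asserted bound. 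Finally, the finiteness hypothesis phrased with $|h(\lambda)|^{\frac1p-\frac1q}>s$ is equivalent to the finiteness of this quantity by exactly the algebraic manipulation carried out at the end of the proof of Theorem~\ref{Jacobimult}: writing $r=(\tfrac1p-\tfrac1q)^{-1}$ and substituting $s\mapsto s^{1/r}$ interchanges the two formulations.

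I expect the only point requiring genuine care — rather than a true obstacle — to be this change-of-variables bookkeeping: one must confirm that the factor-of-two rescalings in $t$ and in $\lambda$, and the precise normalization relating the Harish-Chandra $c$-function on $S$ to its Jacobi counterpart, affect the weighted level-set integrals only through harmless constants, so that the implicit constant in $\lesssim$ stays independent of $h$ and $f$.
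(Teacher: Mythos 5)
Your transport argument is exactly the route the paper intends: the paper gives no separate proof of this corollary, presenting it as an immediate consequence of Theorem \ref{Jacobimult} via the dictionary $\phi_\lambda(t)=\phi^{\alpha,\beta}_{2\lambda}(t/2)$ between radial analysis on $S$ and Jacobi analysis, and your write-up makes that dictionary explicit and correctly: $f\mapsto g$, $g(u)=f(2u)$, identifies $L^p(S)^\#$ with $L^p(\mu)$ up to constants, intertwines $\mathcal{H}$ with the Jacobi transform, turns $T_h$ into the Jacobi multiplier with symbol $h(\mu/2)$, and the rescaling $\mu=2\lambda$ only changes level-set integrals by harmless constants (indeed $c_S(\lambda)=c_{\alpha,\beta}(2\lambda)$ up to normalization, consistent with $\rho_{\alpha,\beta}=2\rho_S$).

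The flaw is in your final step. You claim the printed hypothesis, finiteness of $\sup_{s>0}s\int_{\{|h(\lambda)|^{\frac1p-\frac1q}>s\}}|c(\lambda)|^{-2}d\lambda$, is \emph{equivalent} to finiteness of $\sup_{s>0}s\bigl(\int_{\{|h(\lambda)|>s\}}|c(\lambda)|^{-2}d\lambda\bigr)^{\frac1p-\frac1q}$ ``by the substitution $s\mapsto s^{1/r}$ as at the end of the proof of Theorem \ref{Jacobimult}.'' That manipulation in the theorem's proof concerns $\psi=|h|^{r}$, i.e.\ the exponent $r=(\tfrac1p-\tfrac1q)^{-1}$, not the exponent $\tfrac1p-\tfrac1q=1/r$ appearing in the corollary's hypothesis. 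Writing $\kappa_h(t):=\int_{\{|h|>t\}}|c(\lambda)|^{-2}d\lambda$, the printed hypothesis says $\sup_t t^{1/r}\kappa_h(t)<\infty$, i.e.\ $\kappa_h(t)=O(t^{-1/r})$, whereas finiteness of the asserted bound says $\sup_t t\,\kappa_h(t)^{1/r}<\infty$, i.e.\ $\kappa_h(t)=O(t^{-r})$; since $r>1$ these are genuinely different conditions (any bounded $h$ with $\kappa_h(t)\asymp t^{-\sigma}$ as $t\to0^+$ for some $\sigma\in(1/r,r]$, e.g.\ $h(\lambda)=(1+\lambda)^{-\delta}$ with suitable $\delta$, satisfies the second but not the first), so no substitution can interchange them. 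What saves the corollary is that only one implication is needed, namely hypothesis $\Rightarrow$ finiteness of the bound, and this does hold: for $0<t\le1$ one has $t^{-1/r}\le t^{-r}$, while for $t\ge1$ boundedness of $h$ gives $\kappa_h(t)\le\kappa_h(1)<\infty$ and $\kappa_h(t)=0$ for $t>\|h\|_\infty$. Replace your equivalence claim by this short implication (or observe that the printed hypothesis is presumably a misprint for the exponent $(\tfrac1p-\tfrac1q)^{-1}$, in which case your substitution argument applies verbatim), and your proof is complete.
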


Now, we apply Theorem \ref{Jacobimult} to prove the $L^p$-$L^q$ boundedness of spectral multipliers for operator $L:=-\mathcal{L}_{\alpha, \beta}.$ If $\varphi \in L^\infty(\mathbb{R}_+, d\mu),$ the spectral multiplier $\varphi(L),$ defined by $\varphi$ coincides with the Jacobi Fourier multiplier $T_h$ with $h(\lambda)= \varphi(\lambda^2+\rho^2)$ for $\lambda \in \mathbb{R}_+.$  The $L^p$-boundedness of spectral multipliers  has been proved by several authors in many different setting, e.g., Bessel transform \cite{BCC}, Dunkl harmonic oscillator \cite{Wro}. The $L^p$-$L^q$ boundedness of spectral multipliers for compact Lie groups, Heisenberg groups, graded Lie groups have been proved by R. Akylzhanov and the second author \cite{AR}. M. Chatzakou and the second author recently studied $L^p$-$L^q$ boundedness of spectral multiplier for the anharmonic oscillator \cite{CK} (see also \cite{CKNR}).

\begin{thm} \label{specmul}
Let $1<p \leq 2 \leq q <\infty$ and let $\varphi$ be a monotonically  decreasing continuous function on $[\rho^2, \infty)$ such that $\lim_{u \rightarrow \infty}\varphi(u)=0.$ Then we have 
\begin{equation}
    \|\varphi(L)\|_{\textnormal{op}} \lesssim \sup_{u>\rho^2} \varphi(u)  \begin{cases} (u-\rho^2)^{\frac{3}{2}(\frac{1}{p}-\frac{1}{q})} & \quad \textnormal{if} \quad (u-\rho^2)^{\frac{1}{2}} \leq 1, \\ (u-\rho^2)^{(\alpha+1)(\frac{1}{p}-\frac{1}{q})} & \quad \textnormal{if} \quad (u-\rho^2)^{\frac{1}{2}}>1,  \end{cases}
\end{equation} where  $\|\cdot\|_{\textnormal{op}} $ denotes the operator norm from $L^p(\mu)$ to $L^q(\mu).$ 
\end{thm}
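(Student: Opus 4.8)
The plan is to reduce the statement to the Jacobi-Fourier multiplier theorem (Theorem \ref{Jacobimult}) and then to estimate explicitly the resulting distribution-function quantity using the two-regime asymptotics of the Plancherel density $|c(\lambda)|^{-2}$. First I would record that, as already observed, the spectral multiplier $\varphi(L)$ coincides with the Jacobi-Fourier multiplier $T_h$ whose symbol is $h(\lambda)=\varphi(\lambda^2+\rho^2)$, $\lambda\in\mathbb{R}_+$. Since $\varphi$ is continuous and monotonically decreasing on $[\rho^2,\infty)$ with $\lim_{u\to\infty}\varphi(u)=0$, the function $h$ is even, measurable, and bounded by $\varphi(\rho^2)$, so Theorem \ref{Jacobimult} applies and yields
\[
\|\varphi(L)\|_{\textnormal{op}}=\|T_h\|_{L^p(\mu)\to L^q(\mu)}\lesssim \sup_{s>0}s\left[\int_{\{\lambda\in\mathbb{R}_+:\,|h(\lambda)|>s\}}|c(\lambda)|^{-2}\,d\lambda\right]^{\frac1p-\frac1q}.
\]

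Next I would make the level sets explicit. Because $h(\lambda)=\varphi(\lambda^2+\rho^2)$ is a nonnegative decreasing function of $\lambda\ge 0$, the super-level set $\{\lambda\in\mathbb{R}_+:\ h(\lambda)>s\}$ is an interval $[0,\Lambda_s)$, and by continuity and monotonicity of $\varphi$ I may write $s=\varphi(u)$ with $u=\Lambda_s^2+\rho^2$, i.e.\ $\Lambda_s=(u-\rho^2)^{1/2}$. As $s$ ranges over $(0,\varphi(\rho^2))$ the parameter $u$ ranges over $(\rho^2,\infty)$, so the supremum over $s$ transforms into a supremum over $u$:
\[
\sup_{s>0}s\left[\int_{\{h(\lambda)>s\}}|c(\lambda)|^{-2}\,d\lambda\right]^{\frac1p-\frac1q}=\sup_{u>\rho^2}\varphi(u)\left[\int_0^{(u-\rho^2)^{1/2}}|c(\lambda)|^{-2}\,d\lambda\right]^{\frac1p-\frac1q}.
\]

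Then I would estimate the inner integral using the Plancherel asymptotics $|c(\lambda)|^{-2}\asymp\lambda^2$ for $0<\lambda\le 1$ and $|c(\lambda)|^{-2}\asymp\lambda^{2\alpha+1}$ for $\lambda>1$. Writing $\Lambda=(u-\rho^2)^{1/2}$, in the regime $\Lambda\le 1$ one has $\int_0^\Lambda|c(\lambda)|^{-2}\,d\lambda\asymp\int_0^\Lambda\lambda^2\,d\lambda\asymp\Lambda^3=(u-\rho^2)^{3/2}$, while in the regime $\Lambda>1$ the tail dominates (here $2\alpha+2>0$ since $\alpha>-\tfrac12$), giving $\int_0^\Lambda|c(\lambda)|^{-2}\,d\lambda\asymp\Lambda^{2\alpha+2}=(u-\rho^2)^{\alpha+1}$. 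Raising these to the power $\frac1p-\frac1q$ produces exactly the two exponents $\frac32(\frac1p-\frac1q)$ and $(\alpha+1)(\frac1p-\frac1q)$ in the two cases, which completes the proof.

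I expect the only genuinely delicate point to be the distribution-function bookkeeping in the second step, namely the passage from $\sup_{s>0}$ to $\sup_{u>\rho^2}$ via the substitution $s=\varphi(u)$, where the monotonicity and continuity of $\varphi$ must be invoked carefully; if $\varphi$ fails to be strictly decreasing the correspondence should be read as the inequality obtained by selecting, for each $s$, some $u$ with $\varphi(u)=s$. The integral estimates themselves are elementary once the correct two-regime behaviour of $|c(\lambda)|^{-2}$ is used.
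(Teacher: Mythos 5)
Your proposal is correct and follows essentially the same route as the paper: reduce $\varphi(L)$ to the Jacobi--Fourier multiplier $T_h$ with $h(\lambda)=\varphi(\lambda^2+\rho^2)$, invoke Theorem \ref{Jacobimult}, convert $\sup_{s>0}$ into $\sup_{u>\rho^2}$ via $s=\varphi(u)$ using monotonicity and continuity, and evaluate $\int_0^{(u-\rho^2)^{1/2}}|c(\lambda)|^{-2}\,d\lambda$ with the two-regime estimates $|c(\lambda)|^{-2}\asymp\lambda^2$ ($\lambda\le 1$), $\asymp\lambda^{2\alpha+1}$ ($\lambda>1$). Your closing remark on handling non-strictly decreasing $\varphi$ (choosing an appropriate $u$ with $\varphi(u)=s$) is in fact slightly more careful than the paper's corresponding step, which passes directly from $\{\varphi(\lambda^2+\rho^2)\ge\varphi(u)\}$ to $\{\lambda\le(u-\rho^2)^{1/2}\}$.
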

\begin{proof} Since $\varphi(L)$ is a Jacobi-Fourier multiplier with the symbol $\varphi(\lambda^2+\rho^2),$ as an application of Theorem \ref{Jacobimult}, we get
\begin{align*}
    \|\varphi(L)\|_{\textnormal{op}} & \lesssim \sup_{s>0} s \left[ \int_{ \{ \lambda \in \mathbb{R}_+ :\, \varphi(\lambda^2+\rho^2) \geq s\} } |c(\lambda)|^{-2}\,d\lambda \right]^{\frac{1}{p}-\frac{1}{q}} \\ & = \sup_{0<s<\varphi(\rho^2)} s \left[ \int_{ \{ \lambda \in \mathbb{R}_+ :\, \varphi(\lambda^2+\rho^2) \geq s\} } |c(\lambda)|^{-2}\,d\lambda \right]^{\frac{1}{p}-\frac{1}{q}}, 
\end{align*} since $\varphi \leq \varphi(\rho^2).$ Now, as $s \in (0, \varphi(\rho^2)]$ we can write $s= \varphi(u)$ for some $u \in [\rho^2, \infty)$ and, therefore, we have 
\begin{align*}
    \|\varphi(L)\|_{\textnormal{op}} \lesssim \sup_{\varphi(u) <\varphi(\rho^2)} \varphi(u)  \left[ \int_{ \{ \lambda \in \mathbb{R}_+ :\, \varphi(\lambda^2+\rho^2) \geq \varphi(u)\} } |c(\lambda)|^{-2}\,d\lambda \right]^{\frac{1}{p}-\frac{1}{q}}.
\end{align*} Since $\varphi$ is monotonically decreasing we get 
\begin{align*}
    \|\varphi(L)\|_{\textnormal{op}} \lesssim \sup_{u >\rho^2} \varphi(u) \left[ \int_{ \{ \lambda \in \mathbb{R}_+ :\, \lambda \leq (u-\rho^2)^{\frac{1}{2}}\} } |c(\lambda)|^{-2}\,d\lambda \right]^{\frac{1}{p}-\frac{1}{q}}.
\end{align*}
Now, we use the estimate of the $c$-function to get 
\begin{align*}
     \|\varphi(L)\|_{\textnormal{op}} &\lesssim  \sup_{u >\rho^2} \varphi(u) \begin{cases}  \left[ \int_0^{ (u-\rho^2)^{\frac{1}{2}} } \lambda^2 \,d\lambda \right]^{\frac{1}{p}-\frac{1}{q}} \quad &\textnormal{if} \quad   (u-\rho^2)^{\frac{1}{2}} \leq 1, \\ \left[ \int_0^1 \lambda^2 d\lambda +\int_1^{ (u-\rho^2)^{\frac{1}{2}} } \lambda^{2\alpha+1}\,d\lambda \right]^{\frac{1}{p}-\frac{1}{q}} \quad &\textnormal{if} \quad  (u-\rho^2)^{\frac{1}{2}}>1
     \end{cases} \\&=\sup_{u>\rho^2} \varphi(u)  \begin{cases} (u-\rho^2)^{\frac{3}{2}(\frac{1}{p}-\frac{1}{q})} & \quad \textnormal{if} \quad (u-\rho^2)^{\frac{1}{2}} \leq 1, \\ (u-\rho^2)^{(\alpha+1)(\frac{1}{p}-\frac{1}{q})} & \quad \textnormal{if} \quad (u-\rho^2)^{\frac{1}{2}}>1,  \end{cases}
\end{align*} proving Theorem \ref{specmul}.
\end{proof}
\section*{Acknowledgment}
The authors thank Jean-Philippe Anker for discussions. Vishvesh Kumar thanks William O. Bray for his suggestions. VK and MR are supported  by the FWO  Odysseus  1  grant  G.0H94.18N:  Analysis  and  Partial Differential Equations and by the Methusalem programme of the Ghent University Special Research Fund (BOF)
(Grant number 01M01021). MR is also supported  by EPSRC grant EP/R003025/2.

\bibliographystyle{amsplain}

\begin{thebibliography}{99}
\bibitem{ARN1}  Akylzhanov, R., Nursultanov, E., Ruzhansky, M. Hardy-Littlewood-Paley inequalities and Fourier multipliers on $SU(2).$ {\it Studia Math.} 234 (2016), no. 1, 1-29.
\bibitem{ARN} Akylzhanov, R., Ruzhansky, M., Nursultanov, E. Hardy-Littlewood, Hausdorff-Young-Paley inequalities, and $L^p-L^q$ Fourier multipliers on compact homogeneous manifolds. {\it  J. Math. Anal. Appl.} 479 (2019), no. 2, 1519--1548.
\bibitem{AR} Akylzhanov, R.,  Ruzhansky, M. $L^p-L^q$ multipliers on locally compact groups, {\em J. Func. Anal.}, 278(3) (2019), DOI: https://doi.org/10.1016/j.jfa.2019.108324
\bibitem{Anker90}  Anker, J.-P. $L^p$ Fourier multipliers on Riemannian symmetric spaces of the noncompact type. {\it Ann. of Math. (2)} 132 (1990), no. 3, 597-628.
\bibitem{Anker92} Anker, J.-P. Sharp estimates for some functions of the Laplacian on noncompact symmetric spaces. {\it Duke Math. J.} 65 (1992), no. 2, 257-297.
\bibitem{Anker96}  Anker, J-P., Damek, E., Yacoub, C. Spherical analysis on harmonic AN groups. {\it Ann. Scuola Norm. Sup. Pisa Cl. Sci.} (4) 23 (1996), no. 4, 643-679. 

\bibitem{Astengo} Astengo, F. A class of $L^p$ convolutors on harmonic extensions of H-type groups. {\it J. Lie Theory} 5 (1995), no. 2, 147-164.
\bibitem{Astengo2}  Astengo, F. Multipliers for a distinguished Laplacean on solvable extensions of $H$-type groups. {\it Monatsh. Math.} 120 (1995), no. 3-4, 179-188.

\bibitem{ACB97}  Astengo, F., Camporesi, R., Di Blasio, B. The Helgason Fourier transform on a class of nonsymmetric harmonic spaces. {\it Bull. Austral. Math. Soc.} 55 (1997), no. 3, 405-424.
\bibitem{Bloom}  Bloom, W. R., Xu, Z. Fourier multipliers for Lp on Chébli-Trimèche hypergroups. {\it Proc. London Math. Soc. (3)} 80 (2000), no. 3, 643-664.
\bibitem{BL}  Bergh, J., Lofstrom, J., {\em Interpolation spaces}, Grundlehren der mathematischen Wissenschaften, (1976).
\bibitem{Bray}  Bray, W. O. Growth and integrability of Fourier transforms on Euclidean space. {\it J. Fourier Anal. Appl.} 20 (2014), no. 6, 1234-1256.
\bibitem{BrayPinsky}  Bray, W. O., Pinsky, M. A. Growth properties of Fourier transforms via moduli of continuity. {\it J. Funct. Anal.} 255 (2008), no. 9, 2265-2285.
\bibitem{BCC}  Betancor, J. J., Castro, A. J., Curbelo, J. Spectral multipliers for multidimensional Bessel operators, \emph{ J. Fourier Anal. Appl.} 17(5) (2011),  932-975.
		\bibitem{CKNR} Cardona, D., Kumar, V., Ruzhansky, M., Tokmagambetov, N. $L^p$–$L^q$ boundedness of pseudo-differential operators on smooth manifolds and its applications to nonlinear equations,(2020). https://arxiv.org/abs/2005.04936
		\bibitem{CK} Chatzakou, M., Kumar M., $L^p$-$L^q$ boundedness of Fourier multipliers associated with the anharmonic Oscillator, (2020). https://arxiv.org/abs/2004.07801
\bibitem{CDK91} Cowling, M., Dooley A., Koranyi, A., Ricci, F. $H$-type groups and Iwasawa decompositions, {\it Adv. Math.} 87 (1991), no. 1, 1-41.
\bibitem{Cow1} Cowling, M., Giulini, S., Meda, S. $L^p$-$L^q$ estimates for functions of the Laplace-Beltrami operator on noncompact symmetric spaces. I. {\it Duke Math. J.} 72 (1993), no. 1, 109-150.
\bibitem{Cow2} Cowling, M., Giulini, S., Meda, S. $L^p$-$L^q$-estimates for functions of the Laplace-Beltrami operator on noncompact symmetric spaces. II. {\it J. Lie Theory} 5 (1995), no. 1, 1-14.

\bibitem{Cow3}  Cowling, M., Giulini, S., Meda, S. $L^p$–$L^q$ estimates for functions of the Laplace-Beltrami operator on noncompact symmetric spaces. III. {\it Ann. Inst. Fourier (Grenoble)} 51 (2001), no. 4, 1047-1069.
\bibitem{Cow4}  Cowling, M., Giulini, S., Meda, S.  Oscillatory multipliers related to the wave equation on noncompact symmetric spaces. {\it J. London Math. Soc. (2)} 66 (2002), no. 3, 691-709.
\bibitem{Damek} Damek, E. The geometry of a semidirect extension of a Heisenberg type nilpotent group. {\it Colloq. Math.} 53 (1987), no. 2, 255-268.
\bibitem{DamekRicci} Damek, E., Ricci, F. A class of nonsymmetric harmonic Riemannian spaces. {\it Bull. Amer. Math. Soc.} 27 (1992), no. 1, 139–142.
 \bibitem{DamekRicci1} Damek, E., Ricci, F. Harmonic analysis on solvable extensions of H-type groups. {\it J. Geom. Anal.} 2 (1992), no. 3, 213–248.
 \bibitem{Di-Blasio} Di Blasio, B. Paley-Wiener type theorems on harmonic extensions of H-type groups. {\it Monatsh. Math.} 123 (1997), no. 1, 21–42.
\bibitem{Sinayoung} Degenfeld-Schonburg, S. On the Hausdorff-Young theorem for commutative hypergroups, \emph{ Colloq. Math.}, 131(2) (2013) 219-231.
\bibitem{DDR} Daher, R., Delgado, J., Ruzhansky, M. Titchmarsh theorems for Fourier transforms of H\"older-Lipschitz functions on compact homogeneous manifolds. {\it Monatsh. Math.} 189 (2019), no. 1, 23-49. 
\bibitem{Del} Delgado, J., Ruzhansky, M. Fourier multipliers, symbols, and nuclearity on compact manifolds. {\it J. Anal. Math.} 135 (2018), no. 2, 757-800.

\bibitem{John}  Johansen, T. R. Non-integrable multipliers for the Jacobi transform. {\it Integral Transforms  Spec. Funct.} 24 (2013), no. 6, 425-440.
\bibitem{FBK}  Fahlaoui, S., Boujeddaine, M., El Kassimi, M. Fourier transforms of Dini-Lipschitz functions on rank $1$ symmetric spaces. {\it Mediterr. J. Math.} 13 (2016), no. 6, 4401-4411.
\bibitem{FK} Flensted-Jensen, M., Koornwinder, Tom H. The convolution structure for Jacobi function expansions. {\it Ark. Mat.} 11 (1973), 245-262.
\bibitem{FK2} Flensted-Jensen, M., Koornwinder, Tom H. Jacobi functions: the addition formula and the positivity of the dual convolution structure. {\it Ark. Mat.} 17 (1979), no. 1, 139–151.
\bibitem{Hormander1960}  H\"ormander, L. Estimates for translation invariant operators in $L^p$ spaces. Acta Math., 104:93--140, 1960.

\bibitem{Hormander} H\"ormander, L. The analysis of linear partialdifferential operators, volume 3, Springer-Verlag, Berlin, 1985.
\bibitem{Kaplan}  Kaplan, A. Fundamental solutions for a class of hypoelliptic PDE generated by composition of quadratic forms. {\it Trans. Amer. Math. Soc.} 258 (1980), no. 1, 147-153.
\bibitem{KRS}  Kumar, P., Ray, S. K., Sarkar, R. P. The role of restriction theorems in harmonic analysis on harmonic NA groups. {\it J. Funct. Anal.} 258 (2010), no. 7, 2453-2482. 
\bibitem{Koorn}  Koornwinder, Tom H. Jacobi functions and analysis on noncompact semisimple Lie groups. Special functions: group theoretical aspects and applications, 1-85, {\it Math. Appl.}, Reidel, Dordrecht, (1984).
\bibitem{HP-1} Littlewood, J.E., Paley, R. E. A. Theorems  on Fourier  series and  power series, {\it J.  London  Math.  Soc.}   6  (1931),  230--233
 \bibitem{HP} Littlewood, J.E., Paley, R. E. A.  Theorems on Fourier series and power series (II). {\it Proc. London Mat. Soc.}, 42 (1937), 52--89.
\bibitem{MRSS}  Mohanty, P., Ray, S. K., Sarkar, R. P., Sitaram, A. The Helgason-Fourier transform for symmetric spaces. II. {\it J. Lie Theory} 14 (2004), no. 1, 227–242.
 \bibitem{MPR} Müller, D., Peloso, M. M., Ricci, F. $L^p$-spectral multipliers for the Hodge Laplacian acting on $1$-forms on the Heisenberg group. {\it Geom. Funct. Anal.} 17 (2007), no. 3, 852-886.
\bibitem{Platonov} Platonov, S.S. The Fourier transform of functions satisfying a Lipschitz condition on symmetric spaces of rank 1. Sibirsk. Mat. Zh.46(6), 1374-1387 (2005).

\bibitem{Platonov1}  Platonov, S. S. The Fourier transform of functions satisfying a Lipschitz condition on symmetric spaces of rank 1. (Russian) {\it Sibirsk. Mat. Zh.} 46 (2005), no. 6, 1374-1387; translation in {\it Siberian Math. J.} 46 (2005), no. 6, 1108-1118
\bibitem{RS}  Ray, S. K., Sarkar, R. P. Fourier and Radon transform on harmonic NA groups. {\it Trans. Amer. Math. Soc.} 361 (2009), no. 8, 4269-4297.
\bibitem{Ruzwirth} Ruzhansky, M., Wirth, J. $L^p$ Fourier multipliers on compact Lie groups. {\it Math. Z.} 280 (2015), no. 3-4, 621-642.
\bibitem{Titch} Titchmarsh E. C. {\it Introduction to the Theory of Fourier Integral}, Oxford University Press, Oxford (1937).

\bibitem{MV} Vallarino, M. {\it Analysis on harmonic extensions
of $H$-type groups}, Ph.D. Thesis, Università degli Studi di Milano–Bicocca, 2005. arXiv:1504.00329.

\bibitem{Wro}  Wr\'obel, B. Multivariate spectral multipliers for the Dunkl transform and the Dunkl harmonic oscillator, {\it Forum Math.} 27(4) (2015),  2301-2322.
\bibitem{Younis}  Younis, M. S. Fourier transforms of Dini-Lipschitz functions. {\it Internat. J. Math. Math. Sci.} 9 (1986), no. 2, 301-312. 
\bibitem{Younis1}  Younis, M. S. Fourier transforms of Lipschitz functions on certain Lie groups. {\it Int. J. Math. Math. Sci.} 27 (2001), no. 7, 439-448.
\bibitem{Younis2}  Younis, M. S. Fourier transforms of Dini-Lipschitz functions on Vilenkin groups. {\it Internat. J. Math. Math. Sci.} 15 (1992), no. 3, 609-612.










\end{thebibliography}

\end{document}